\newcommand\R{\mathbb{R}}
\newcommand\C{\mathbb{C}}
\newcommand\bl{\left(}
\newcommand\br{\right)}
\newcommand*\di{\mathop{}\!\mathrm{d}}
\renewcommand\epsilon{\varepsilon}
\renewcommand\theta{\vartheta}
\newtheoremstyle{mytheoremstyle} 
    {6pt}                    
    {6pt}                    
    {\itshape}                   
    {}						
    {\bf}                   
    {}                          
    {.5em}                       
    {}  
\theoremstyle{mytheoremstyle}
\newtheorem{satz}{Satz}[section]
\newtheorem{lemma}[satz]{Lemma}
\newtheorem{proposition}[satz]{Proposition}
\newtheorem{corollary}[satz]{Corollary}
\newtheorem{definition}[satz]{Definition}
\newtheorem{ass}[satz]{Assumption}
\newtheorem{rem}[satz]{Remark}
\numberwithin{equation}{section}
\title{Pattern formation of a nonlocal, anisotropic interaction model}
\author[M. Burger, B. D\"{u}ring, L. M. Kreusser, P. A. Markowich and C.-B. Sch\"{o}nlieb]{Martin Burger, Bertram D\"{u}ring, Lisa Maria Kreusser, Peter A. Markowich and Carola-Bibiane Sch\"{o}nlieb}
\begin{document}

\begin{abstract}
We consider a class of  interacting particle models with anisotropic, repulsive-attractive interaction forces whose orientations depend on an underlying tensor field. An example of this class of models is the so-called K\"{u}cken-Champod model describing the formation of fingerprint patterns. 
This class of models can be regarded as a generalization of a gradient flow of a nonlocal interaction potential which has a local repulsion and a long-range attraction structure. In contrast to  isotropic interaction models  the anisotropic forces in our class of models cannot be derived from a potential. The underlying tensor field introduces an anisotropy leading to complex patterns  which do not occur in  isotropic models. This anisotropy is characterized by one parameter in the model. We study the variation of this parameter, describing the transition between the isotropic and the anisotropic model, analytically and numerically. We analyze the equilibria of the corresponding mean-field partial differential equation  and investigate  pattern formation numerically in two dimensions by studying the dependence of the parameters in the model on the resulting patterns. 
\end{abstract}

\maketitle

\section{Introduction}

Nonlocal interaction models are mathematical models describing the  collective behavior of  large numbers of individuals where each individual can interact not only with its close neighbors but also with individuals far away. These models serve as basis for biological aggregation and have given us many tools to understand the fundamental behavior of collective motion and pattern formation in nature. For instance, these mathematical models are used to explain the complex phenomena observed in swarms of insects, flocks of birds, schools of fish or colonies of bacteria  \cite{Boi2000163,burger2007aggregation,selfpropelled_particles, selforganization, swarmequilibria, order_of_chaos, scienceanialaggregation, fishbehavior,swarmdynamics, PredictingPatternFormation, stabilityringpatterns, bacterialcolonies, Bertozzi2015, migratorylocusts, Birnir2007, nonlocal_swarm, Blanchet2006, Blanchet2008, cuckersmale, SwarmingPatterns}.

One of the key features of many of these models is the social communication between individuals at different scales which can be described by short- and long-range interactions  \cite{swarmequilibria,nonlocal_swarm,migratorylocusts}. Over large distances the individuals in these models can sense each other via sight, sound, smell, vibrations or other signals. While most models consider only isotropic interactions,  pattern formation in nature is often anisotropic \cite{patternsinnature} and  an anisotropy in the communication between individuals is essential  to describe these phenomena accurately. In this paper, we consider a class of interacting particle models with anisotropic interaction forces. It can be regarded as a generalization of isotropic interaction models. These anisotropic interaction models capture many important swarming behaviors which are neglected in the simplified isotropic interaction model.

Isotropic interaction models with radial interaction potentials can be regarded as the simplest form of interaction models \cite{nonlocalinteraction}. The resulting patterns are found as stationary points of the $N$ particle interaction energy
\begin{align}\label{eq:discreteinteractionenergy}
E(x_1,\ldots,x_N)=\frac{1}{2N^2}\sum_{\substack{j,k=1\\k\neq j}}^N W\bl x_j-x_k\br
\end{align}
where $W(d)=w(|d|)$ denotes the radially symmetric interaction potential and $x_j=x_j(t)\in \R^n$ for $j=1,\ldots,N$ denote the positions of the particles at time $t\geq 0$  \cite{Bertozzi2015, stabilityringpatterns}. The associated gradient flow is:
\begin{align}\label{eq:standardmodel}
\frac{\di x_j}{\di t}
=\frac{1}{N}\sum_{\substack{k=1\\k\neq j}}^N F(x_j-x_k)
\end{align}
where $F(d)=-\nabla W(d)$. Here, $F(x_j-x_k)$ is a conservative force, aligned along the distance vector $x_j-x_k$. Denoting the density of particles at location $x\in \R^n$ and at time $t>0$ by  $\rho=\rho(t,x)$ the interaction energy is given by
$$\mathcal{W}[\rho]=\frac{1}{2}\int_{\R^2} \bl W\ast \rho\br(x) \rho(\di x)$$ and the continuum equation corresponding to \eqref{eq:standardmodel}, also referred to as the aggregation equation  \cite{bertozzi2009, stabilityringpatterns, Bertozzi2015, aggregationeq}, reads
\begin{align}\label{eq:standardmodelmacroscopic}
\rho_t+\nabla\cdot\bl \rho u\br=0,\qquad u=-\nabla W\ast \rho
\end{align} 
where  $u=u(t,x)$ is the macroscopic velocity field. The aggregation equation \eqref{eq:standardmodelmacroscopic} whose well-posedness has been proved in \cite{Bertozzi2011} has extensively been studied recently, mainly in terms of its gradient flow structure \cite{Li2004, Carrillo:2003, opac-b1122739, gradientflows, Carrillo2006}, the blow-up dynamics for fully
attractive potentials \cite{bertozzi2009, carrillo2011, bertozzi2012, Carrillo2016304}, and the rich variety of steady states \cite{Fellner2010, Fellner20111436, swarmequilibria, nonlocalInteractionEquation, Carrillo2012306, Carrillo2012550, bertozzi2012, nonlocalinteraction, balague_preprint, vonBrecht2012, PredictingPatternFormation, Confinement, Canizo2015, Carrillo2016, Carrillo2012306}.

If the radially symmetric potential $W$ is purely attractive, e.g. $w$ is an increasing function with $w(0)=0$, the density of the particles converges to a Dirac Delta function located at the center of mass of the density \cite{Burger2008}. In this case, the Dirac Delta function is the unique stable steady state and a global attractor  \cite{carrillo2011}. Under certain conditions the collapse towards the Dirac Delta function can take place in finite time  \cite{carrillo2011, bertozzi2009, finiteTimeBlowup, Bertozzi2007}.

In biological applications, however, it is not sufficient to consider purely attractive potentials since the inherently nonlocal interactions between the individual entities  occur on different scales  \cite{swarmequilibria, migratorylocusts, nonlocal_swarm}.  These interactions are usually described by  short-range repulsion to prevent collisions between the individuals as well as  long-range attraction that keeps the swarm cohesive \cite{Mogilner2003, Okubo}. The associated radially symmetric potentials $w$, also referred to as repulsive-attractive potentials, first decrease  and then increase as a function of the radius. These potentials lead to possibly more complex steady states than the purely attractive potentials and can be considered as a minimal model for pattern formation in large systems of individuals \cite{nonlocalinteraction}.

The 1D nonlocal interaction equation with a repulsive-attractive potential  has been studied in \cite{Fellner20111436, nonlocalInteractionEquation, Fellner2010}. The authors show that the behavior of the solution strongly depends on the regularity of the interaction potential. More precisely, the solution converges to a sum of Dirac masses for regular interaction, while it remains uniformly bounded for singular repulsive potentials. 

Pattern formation for repulsive-attractive potentials in multiple dimensions is studied in \cite{Bertozzi2015, PredictingPatternFormation, stabilityringpatterns, vonBrecht2012}.  The authors perform a linear stability analysis of  ring equilibria and derive conditions on the potential to classify the different instabilities. This analysis can also be used to study the  stability of flock solutions and mill rings in the associated second-order model, see  \cite{secondordermodel} and \cite{Carrillo2014NonlinFlock} for the linear and  nonlinear stability of flocks, respectively. A numerical study of the $N$ particle interaction model for specific repulsion-attraction potentials \cite{stabilityringpatterns, Bertozzi2015} outlines a wide range of radially symmetric patterns such as rings, annuli and uniform circular patches, while exceedingly complex patterns  are also possible. In particular, minimizers of the interaction energy  \eqref{eq:discreteinteractionenergy}, i.e. stable stationary states of the microscopic model \eqref{eq:standardmodel}, can be radially symmetric even for radially symmetric potentials. This has been studied and discussed by instabilities of the sphere and ring solution in \cite{Bertozzi2015, PredictingPatternFormation, vonBrecht2012}.  The convergence of radially symmetric solutions towards spherical shell stationary states in multiple dimensions is discussed in \cite{nonlocalinteraction}. Another possibility to produce concentration in lower dimensional sets is to use potentials which are not radially symmetric. This has been explored recently in the area of dislocations in  \cite{Mora}. Moreover, the nonlocal interaction equation in heterogeneous environments (where domain boundaries are also allowed) is investigated in \cite{nonlocalinteraction_heterogeneities}. Besides, interaction energies with boundaries have been studied in \cite{Carrillo:2016}. 

Nonlocal interaction models have been studied for specific types of repulsive-attractive potentials \cite{balague_preprint,geometryminimizer,Filippov,Carrillo20130399,explicitsol,swarmdynamics}. In \cite{balague_preprint} the dimensionality of the patterns is analyzed for repulsive-attractive potentials that are strongly  or mildly repulsive  at the origin, i.e. potentials with a singular Laplacian at the origin satisfying $\Delta W(d)\sim -|d|^{-\beta}$ as $d\to 0$ for some $0<\beta<n$ in  $n$ dimensions and potentials whose Laplacian does not blow up at the origin  satisfying $W(d)\sim -|d|^{\alpha}$ as $d\to 0$ for some $\alpha>2$, respectively. In \cite{swarmdynamics} a specific example of a repulsive-attractive potential is studied, given by a Newtonian potential for the repulsive and a polynomial for the attractive part, respectively. 

In this work, we consider an evolutionary  particle model with an anisotropic interaction force in two dimensions. More precisely, we generalize the extensively studied model \eqref{eq:standardmodel} by considering an $N$ particle model of the form 
\begin{align}\label{eq:particlemodel}
\frac{\di x_j}{\di t}=\frac{1}{N}\sum_{\substack{k=1\\k\neq j}}^N F(x_j-x_k,T(x_j))
\end{align}
where   $F(x_j-x_k,T(x_j))\in\R^2$  describes the force exerted from $x_k$ on $x_j$. Here,  $T(x_j)$ denotes a tensor field at location $x_j$ which is given by $T(x):=\chi s(x)\otimes s(x) +l(x)\otimes l(x)$ for orthonormal  vector fields $s=s(x), l=l(x)\in\R^2$ and $\chi\in[0,1]$.

As in the standard particle model \eqref{eq:standardmodel} we assume that the  force $F(x_j-x_k,T(x_j))$  is the sum of  repulsion and  attraction forces. In \eqref{eq:standardmodel}, attraction and repulsion forces are  aligned along the distance vector $x_j-x_k$ so that the total force  $F(x_j-x_k)$ is also aligned along $x_j-x_k$. In the extended model \eqref{eq:particlemodel}, however, the   orientation of $F(x_j-x_k,T(x_j))$ depends not only on the distance vector $x_j-x_k$ but additionally on the tensor field $T(x_j)$ at  location $x_j$. More precisely, the attraction force will be assumed to be aligned along the vector $T(x_j)(x_j-x_k)$. Since $T$ depends on a parameter $\chi\in [0,1]$ the resulting force direction is regulated by  $\chi$. In particular,  alignment along the distance vector $x_j-x_k$  is included in \eqref{eq:particlemodel} for $\chi=1$.
 The additional dependence of \eqref{eq:particlemodel} on the parameter $\chi$ in the definition of the tensor field $T$ introduces an anisotropy to the equation. This anisotropy 
leads to more complex, anisotropic patterns that do not occur in the simplified model \eqref{eq:standardmodel}. Due to the dependence on parameter $\chi$  the force $F$ is  non-conservative  in general so that it cannot be derived from a potential. However, most of the analysis of the interaction models in the literature relies on the existence of an interaction potential as outlined above. A particle interaction model of the form \eqref{eq:particlemodel} with a non-conservative force term that depends on an underlying tensor field $T$ appears not to have been investigated mathematically in the literature yet. 

Due to the generality of the formulation of the anisotropic interaction model \eqref{eq:particlemodel} a better understanding of the pattern formation in \eqref{eq:particlemodel} can be regarded as a first step towards understanding anisotropic pattern formation in nature. An example of an $N$ particle model of the form \eqref{eq:particlemodel} is the model introduced by K\"{u}cken and Champod in \cite{Merkel}, describing the formation of fingerprint patterns based on the interaction of Merkel cells and mechanical stress in the epidermis \cite{fingerprintbiology}. Even though the K\"{u}cken-Champod model \cite{Merkel} seems to be capable to produce transient patterns that resemble fingerprint patterns, the pattern formation of the  K\"{u}cken-Champod model and its dependence on the model parameters have not been studied analytically or numerically before. In particular, the long-time behavior of solutions to the  K\"{u}cken-Champod model and its stationary solutions have not been understood yet. However, stationary solutions to the K\"{u}cken-Champod model  are of great interest for simulating fingerprints since fingerprint patterns only change in size and not in shape after we are born so that every person has the same fingerprints from infancy to adulthood. Clearly, fingerprint patterns are of great importance in forensic science. Besides, they are increasingly used in biometric applications. 
Hence, understanding the model, proposed in \cite{Merkel}, and in particular its pattern formation  result in a better understanding of the fingerprint pattern formation process. 

The goal of this work is to study the equilibria of the microscopic model \eqref{eq:particlemodel} and the associated mean-field PDE analytically and numerically. We investigate the existence of equilibria analytically. Since numerical simulations are crucial for getting a better understanding of the patterns which can be generated with the  K\"{u}cken-Champod model we investigate the impact of  the model parameters on the  resulting transient and steady patterns numerically. In particular, we study the transition of steady states with respect to the parameter $\chi$. Based on the results in this paper we study the solution to the K\"{u}cken-Champod model for non-homogeneous tensor fields, simulate the fingerprint pattern formation process and model fingerprint patterns with certain features in \cite{fingerprint_preparation}. 


Note that the modeling involves multiple scales which can be seen in several different ways. Given the particle model in \eqref{eq:particlemodel} we consider the associated particle density  to derive the mean-field limit. Here, the interaction force exhibits short-range repulsion and long-range attraction. The direction of the attraction force depends on the parameter $\chi$ which is responsible for different transient and steady state patterns. More precisely, ring equilibria obtained for $\chi=1$ evolve into ellipse patterns and stripe patterns as $\chi$ decreases.
Besides, large-time asymptotics are considered for determining the equilibria.

This work is organized as follows. In Section \ref{sec:descriptionmodel}, a general formulation of an anisotropic $N$ particle model \eqref{eq:particlemodel} and the associated mean-field PDE \eqref{eq:macroscopiceq} are introduced and a connection to the  K\"{u}cken-Champod model \cite{Merkel} is established via a specific class of interaction forces. The  solution to the mean-field PDE \eqref{eq:macroscopiceq} is analyzed in Section \ref{sec:analysis}. More precisely, we discuss the impact of the parameter $\chi$ on the force alignment and on the solution to the model. Besides, we study the impact of spatially homogeneous tensor fields and we show that the equilibria to the mean-field PDE \eqref{eq:macroscopiceq} for any spatially homogeneous tensor field can be regarded as a coordinate transform of the tensor field $T=\chi s\otimes s +l\otimes l$ where $s=(0,1)$ and $l=(1,0)$ for any parameter $\chi\in[0,1]$. Hence, we can restrict ourselves to this specific tensor field $T$ for the analysis.  We investigate the existence of equilibria to the mean-field PDE \eqref{eq:macroscopiceq} whose form depend on the choice of $\chi$. Under certain assumptions we  show that
for $\chi=1$ there exists at most one radius $R>0$ such that the ring state of radius $R$ is a nontrivial equilibrium mean-field PDE \eqref{eq:macroscopiceq} for spatially homogeneous tensor fields and uniqueness can be guaranteed under an additional assumption, while for $\chi\in[0,1)$ the ring state  is no equilibrium.  For $\chi\in[0,1]$ and $R>0$ sufficiently small there exists at most one $r>0$  such that an ellipse with major axis $R+r$ and minor axis $R$ whose major axis is aligned along $s$ is an equilibrium. Besides, the shorter the minor axis of the ellipse, the longer the major axis of possible ellipse steady states and the smaller the value of $\chi$ the longer the major and the shorter the minor axis of the possible ellipse equilibrium.  Section \ref{sec:numerics} contains a description of the numerical method and we discuss the simulation results for the K\"{u}cken-Champod model \eqref{eq:particlemodel}. The numerical results include an investigation of the stationary solutions and their dependence on different parameters in the model, including the impact of the parameter $\chi$ and the associated transition between the isotropic and anisotropic model. Besides, we compare the numerical with the analytical results.

\section{Description of the model}\label{sec:descriptionmodel}
K\"{u}cken and Champod introduced a particle model in \cite{Merkel} modeling the formation of fingerprint patterns by describing the interaction between so-called Merkel cells  on a  domain $\Omega\subseteq \R^2$. Merkel cells are epidermal cells that appear in the volar skin at about the 7th week of pregnancy. From that time onward they start to multiply and organize themselves in lines exactly where the primary ridges arise. The model introduced in \cite{Merkel} models this pattern formation process as the rearrangement of Merkel cells from a random initial configuration into roughly parallel ridges along the lines of smallest compressive stress.

In this section, we describe a general formulation of the anisotropic microscopic model, relate it to the K\"{u}cken-Champod particle model and formulate the corresponding mean-field PDE.

\subsection{General formulation of the anisotropic interaction model}
In the sequel we consider $N$ particles
 at positions $x_j=x_j(t)\in\R^2,~j=1,\ldots,N,$ at time $t$. The evolution of the particles can be described by \eqref{eq:particlemodel} with initial data $x_j(0)=x_j^{in},~j=1,\ldots,N$.
Here, $F(x_j-x_k,T(x_j))$ denotes the total force that particle $k$ exerts on particle $j$ subject to an underlying stress tensor field $T(x_j)$ at $x_j$, describing the local stress field. The dependence on $T(x_j)$ is based on the experimental results in \cite{KIM1995411} where an alignment of the particles along the local stress lines is observed, i.e. the evolution of particle $j$ at location $x_j$ depends on the the local stress tensor field $T(x_j)$. Note that \eqref{eq:particlemodel} results from Newton's second law by neglecting the inertia term. 

The total force $F$ in \eqref{eq:particlemodel} is given by
\begin{align}\label{eq:totalforce}
F(d(x_j,x_k),T(x_j))=F_A(d(x_j,x_k),T(x_j))+F_R(d(x_j,x_k))
\end{align} 
for the distance vector $d(x_j,x_k)=x_j-x_k\in\R^2$. Here, $F_R$ denotes the repulsion force that particle $k$ exerts on particle $j$ and $F_A$ is the attraction force exerted on particle $j$ by particle $k$. The tensor field $T(x_j)$ at $x_j$ encodes the direction of the fingerprint lines at $x_j$  
and is given by $T(x_j)=\chi s(x_j)\otimes s(x_j)+l(x_j)\otimes l(x_j)$ with $\chi\in[0,1]$. Here, $s=s(x_j)\in \R^2$ and $l=l(x_j)\in\R^2$ are orthonormal vectors, describing the directions of smallest and largest stress, respectively. The repulsion and attraction forces are of the form
\begin{align}\label{eq:repulsionforce}
F_R(d)=f_R(|d|)d
\end{align}
and
\begin{align}\label{eq:attractionforce}
F_A(d=d(x_j,x_k),T(x_j))=f_A(|d|)T(x_j)d,
\end{align}
respectively, where, again, $d=d(x_j,x_k)=x_j-x_k\in\R^2$. The direction of the interaction forces is determined by the parameter $\chi\in[0,1]$ in the definition of  $T$. For $\chi=1$ the tensor field $T$ is the two-dimensional unit matrix and the attraction force between two particles is aligned along their distance vector, while for $\chi=0$ the attraction between two particles is oriented along $l$. Depending on the choice of the coefficient functions $f_R$ and $f_A$ in \eqref{eq:repulsionforce} and \eqref{eq:attractionforce}, respectively, the forces are repulsive or attractive according to the following local definition:
\begin{definition}[Strictly repulsive (attractive) forces]\label{def:attractionrepulsion}
Let the vector field $G=G(x,y)$  be a continuous interaction force, i.e. the vector $G(x,y)$ is the force which is exerted on $x$ by $y$. Then $G$ at $x$ in direction $x-y$ is strictly repulsive (attractive) if
\begin{align*}
G(x,y)\cdot (x-y)>0\qquad (<0).
\end{align*}
\end{definition}

The meaning of this definition is the following. Let $y$ be fixed and let $X=X(t)$ be the trajectory given by
\begin{align*}
\frac{\di X}{\di t}=G(X,y),\quad X(0)=x,
\end{align*}
then $|X(t)-y|$ is locally at $t=0$ strictly monotonically increasing (decreasing).

To guarantee that $F_R$ and $F_A$ are repulsion and attractive forces, we make assumptions on the coefficient functions $f_R$ and $f_A$  in \eqref{eq:repulsionforce} and \eqref{eq:attractionforce}, respectively. Besides, assumptions on the total force $F$ in \eqref{eq:totalforce} are necessary to derive the mean-field PDE. In the sequel, we assume that the following conditions are satisfied:
\begin{ass}\label{ass:propertyforce}
We assume that $f_R\colon \R^2\to\R$ and $f_A\colon \R^2\to \R$ denote smooth, integrable coefficient functions satisfying
\begin{align}\label{eq:forcecoefficients}
f_R(|d|)\geq 0 \quad \text{and}\quad f_A(|d|)\leq 0\quad \text{for all}\quad d\in\R^2,
\end{align}
such that the total interaction force $F$ in \eqref{eq:totalforce} exhibits short-range repulsion and long-range attraction forces along $l$, i.e. there exists a $d_a>0$ such that
\begin{align*}
(f_A+f_R)(|d|)\leq 0 \enspace \text{for}\enspace |d|>d_a \qquad \text{and}\qquad  (f_A+f_R)(|d|)> 0 \enspace \text{for}\enspace 0\leq |d|<d_a.
\end{align*}
Also, $F\in C^1$ has bounded total derivatives, i.e. there exists some $L\geq 0$ such that
\begin{align*}
\begin{split}
\sup_{x,x'\in\R^2}|D_x F(d(x,x'),T(x))|\leq L \quad\text{and}\quad \sup_{x,x'\in\R^2}|D_{x'} F(d(x,x'),T(x))|\leq L,
\end{split}
\end{align*}
where $D_x$ denotes the total derivative with respect to $x$. This implies that $F$ is Lipschitz continuous in both arguments. In particular, $F$ grows at most linearly at infinitely.
\end{ass}

\begin{rem}[Existence of an interaction potential]\label{rem:existencepotential}
The repulsion force $F_R$ of the form \eqref{eq:repulsionforce} can  be derived from a radially symmetric potential. For the existence of interaction potentials for the attractive force $F_A$ we restrict ourselves to spatially homogeneous tensor fields first. Let $\chi\in[0,1]$, set $l=(1,0)$ and $s=(0,1)$, and let $\tilde{T}=\chi \tilde{s} \otimes \tilde{s}+\tilde{l}\otimes \tilde{l}$ denote a spatially homogeneous tensor field for orthonormal vectors $\tilde{l},\tilde{s}\in\R^2$. Then, 
\begin{align}\label{eq:proofvectortransform}
\tilde{s}=R_{\theta}s\quad\text{and}\quad \tilde{l}=R_{\theta}l,
\end{align}
  where the angle of rotation $\theta$ and the corresponding rotation matrix $R_{\theta}$ are given by
\begin{align}\label{eq:anglerotationproof}
\theta=\begin{cases}
\arccos(\tilde{s}_2) & \tilde{s}_1<0\\
2\pi-\arccos(\tilde{s}_2) & \tilde{s}_1>0
\end{cases},\qquad\text{and}\qquad
R_\theta=\begin{pmatrix}
\cos(\theta) & -\sin(\theta)\\
\sin(\theta) & \cos(\theta)
\end{pmatrix},
\end{align}
respectively, and we have $\tilde{T}=R_{\theta}T R_{\theta}^T$ with  $T=\chi s \otimes s+l\otimes l$. Hence, 
\begin{align*}
F_A\bl d,\tilde{T}\br=f_A\bl|d|\br\begin{pmatrix}
\cos^2\bl \theta\br+\chi\sin^2\bl\theta\br & \bl 1-\chi\br \sin\bl\theta\br\cos\bl\theta\br\\
\bl 1-\chi\br\sin\bl\theta\br \cos\bl\theta\br & \chi \cos^2\bl\theta\br+\sin^2\bl\theta\br
\end{pmatrix}d
\end{align*}
by \eqref{eq:attractionforce}, where $d=(d_1,d_2)\in \R^2$. The condition $$\frac{\partial (F_A)_1}{\partial d_2}=\frac{\partial (F_A)_2}{\partial d_1}$$
for $F_A$ being a conservative force implies $$\cos^2\bl\theta\br+\chi\sin^2\bl\theta\br=\chi\cos^2\bl\theta\br+\sin^2\bl\theta\br \quad\text{and}\quad \bl 1-\chi\br \sin\bl\theta\br\cos\bl\theta\br=0,$$ which can only be satisfied simultaneously for $\chi=1$ and $\theta\in[0,2\pi)$ arbitrary. Thus, the attraction force for spatially homogeneous tensor fields is conservative for $\chi=1$ only and the associated potential is radially symmetric. This also implies that there exists a potential for $\chi=1$ for any tensor field, while for $\chi\in[0,1)$ there exists no potential. In particular, a potential that is not radially symmetric cannot be constructed for the attraction force $F_A$ for $\chi\in[0,1)$.
\end{rem}

The associated mean-field model  for the distribution function $\rho=\rho(t,x)$ at position $x\in \R^2$ and time $t\geq 0$ can be  derived rigorously in the 1-Wasserstein metric from the microscopic model \eqref{eq:particlemodel}
following the procedure described in \cite{Golse, meanfieldlimit}. The Cauchy problem for the mean-field PDE reads
\begin{align}\label{eq:macroscopiceq}
\begin{split}
\partial_t \rho(t,x)+\nabla_x\cdot \left[ \rho(t,x)\bl F\bl\cdot,T(x)\br \ast \rho(t,\cdot)\br\bl x\br\right]=0\qquad \text{in }\R_+\times \R^2
\end{split}
\end{align}
with initial condition $\rho|_{t=0}=\rho^{in}$  in $\R^2$.

\subsection{K\"{u}cken-Champod particle model}
Consider as an example the K\"{u}cken-Champod particle model \cite{Merkel} with a spatially homogeneous tensor field $T$ producing straight parallel ridges, e.g. $$T=\begin{pmatrix}
1& 0\\ 0 &\chi
\end{pmatrix}, $$   is considered for studying the pattern formation. For more realistic patterns  the tensor field is generated from 3D finite element simulations  \cite{fingerprintformation1,fingerprintformation2} or from images of real fingerprints. 
The coefficient function $f_R$ of the repulsion force $F_R$ \eqref{eq:repulsionforce} in  the K\"{u}cken-Champod model \eqref{eq:particlemodel} is given by
\begin{align}\label{eq:repulsionforcemodel}
f_R(d)=(\alpha |d|^2+\beta)\exp(-e_R |d|)
\end{align}
for  $d\in\R^2$ and nonnegative parameters $\alpha$, $\beta$ and $e_R$. The coefficient function $f_A$  of the attraction force \eqref{eq:attractionforce} is given by
\begin{align}\label{eq:attractionforcemodel}
f_A(d)=-\gamma|d|\exp(-e_A|d|)
\end{align}
for $d\in\R^2$ and nonnegative constants $\gamma$ and $e_A$. For the case that the total force \eqref{eq:totalforce} exhibits short-range repulsion and long-range attraction along $l$, we choose the parameters as follows:
\begin{align}\label{eq:parametervaluesRepulsionAttraction}
\begin{split}
\alpha&=270, \quad \beta=0.1, \quad \gamma=35, \quad
e_A=95, \quad e_R=100, \quad \chi\in[0,1].
\end{split}
\end{align} 
The coefficient functions  \eqref{eq:repulsionforcemodel} and \eqref{eq:attractionforcemodel} for the repulsion and attraction forces \eqref{eq:repulsionforce} and \eqref{eq:attractionforce} in the K\"{u}cken-Champod model \eqref{eq:particlemodel}  are plotted in Figure \ref{fig:forces} for the parameters in \eqref{eq:parametervaluesRepulsionAttraction} and one can easily check that they satisfy Assumption \ref{ass:propertyforce}. 
If not stated otherwise, we consider the parameter values in \eqref{eq:parametervaluesRepulsionAttraction} for the force coefficient functions \eqref{eq:repulsionforcemodel} and \eqref{eq:attractionforcemodel} in the sequel.  The interaction forces between two particles with distance vectors $d=|d|l$ and $d=|d|s$ are given by $((f_R+f_A)\cdot\text{id})l$ and $((f_R+\chi f_A)\cdot\text{id})s$, respectively, and are shown in Figure \ref{fig:sumofforces} for $\chi=0.2$, while the corresponding coefficient functions  are illustrated in Figure \ref{fig:forces}. 
For the choice of parameters in \eqref{eq:parametervaluesRepulsionAttraction}   repulsion dominates for short distances along $l$ to prevent the collision of particles. Besides, the total force exhibits   long-range attraction along $l$ whose absolute value decreases with the distance between particles. Along $s$ the particles are always repulsive for $\chi=0.2$, independent of the distance, though the  repulsion force gets weaker for longer distances. 
\begin{figure}[htbp]
    \centering
    \subfloat[Force coefficients  $f_R$ and  $f_A$]{\includegraphics[width=0.45\textwidth]{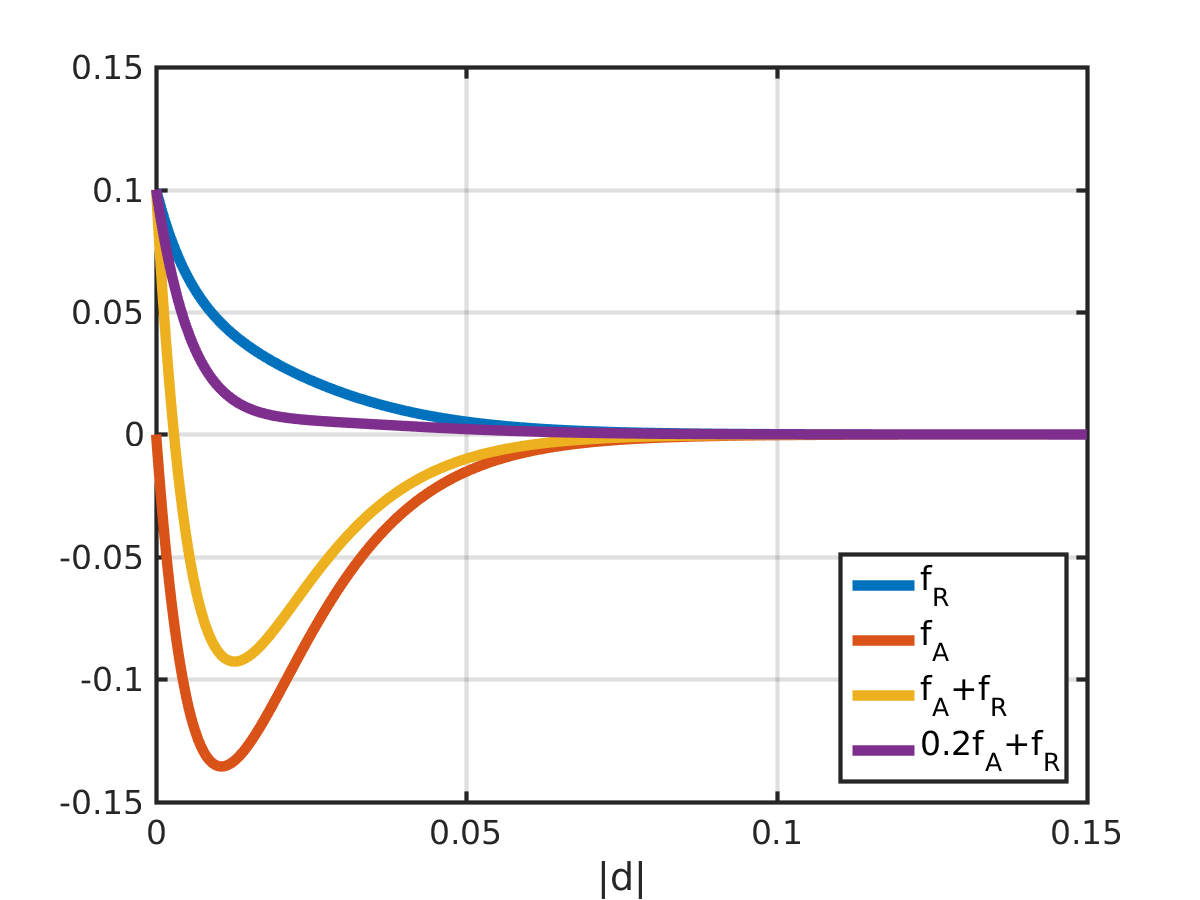}\label{fig:forces}}
\subfloat[Total force coefficients along $l$ and $s$]{\includegraphics[width=0.45\textwidth]{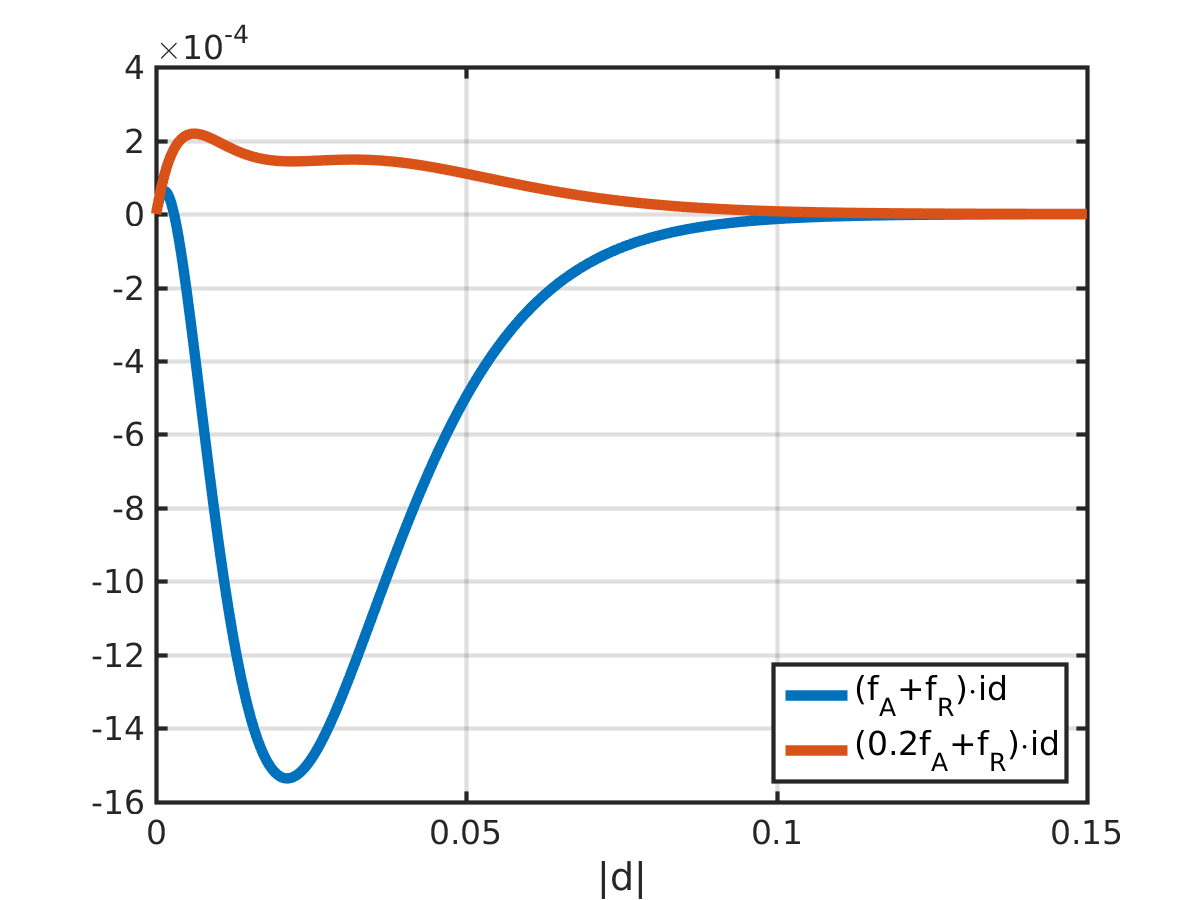}\label{fig:sumofforces}}
    \caption{Coefficients $f_R$ in \eqref{eq:repulsionforcemodel} and $f_A$ in \eqref{eq:attractionforcemodel} of repulsion force \eqref{eq:repulsionforce} and attraction force \eqref{eq:attractionforce}, respectively, as well as the total interaction force   along $l$ and $s$ for $\chi=0.2$ (i.e. $(f_A+f_R)\cdot\text{id}$ and $(0.2f_A+f_R)\cdot\text{id}$, respectively) and its coefficients (i.e. $f_A+f_R$ and $0.2f_A+f_R$) for parameter values in  \eqref{eq:parametervaluesRepulsionAttraction}}
\end{figure}

\section{Analysis of the model}\label{sec:analysis}
We analyze the equilibria of the mean-field PDE \eqref{eq:macroscopiceq} in terms of the parameter $\chi\in[0,1]$  for the general formulation of the model, i.e. the total force is given by \eqref{eq:totalforce} where the repulsion and the attraction forces are of the form \eqref{eq:repulsionforce} and \eqref{eq:attractionforce}, respectively.

\subsection{Interpretation of the total force}\label{sec:interpretationforce}
The alignment of the attraction force \eqref{eq:attractionforce} and thus the pattern formation strongly depend on the choice of the parameter $\chi\in[0,1]$. For $\chi=1$  the total force $F$ in \eqref{eq:totalforce} can be derived from a radially symmetric potential and the mean-field PDE \eqref{eq:macroscopiceq} reduces to the isotropic interaction equations \eqref{eq:standardmodelmacroscopic}. In particular, the solution to \eqref{eq:macroscopiceq} is radially symmetric for $\chi=1$ for radially symmetric initial data \cite{finiteTimeBlowup}.
 
For $\chi\in[0,1)$ the attraction force $F_A$ of the form \eqref{eq:attractionforce} is not conservative by Remark \ref{rem:existencepotential} and can be written as the sum of a conservative and a non-conservative force, given by
$F_A=F_{A,1}+F_{A,2}$
with
\begin{align*}
F_{A,1}(d)=f_A(|d|)d
\end{align*}
and
\begin{align*}
F_{A,2}(d(x_j,x_k),T(x_j))=f_A(|d|)(T(x_j)-I)d=f_A(|d|)\left(\chi-1\right) \bl s(x_j)\cdot d\br s(x_j),
\end{align*}
where  $d=d(x_j,x_k)=x_j-x_k$ and $I$ denotes the two-dimensional identity matrix. In particular, $F_{A,1}$ does not depend on $\chi$ and is equal to the attraction force in \eqref{eq:attractionforce} with $\chi=1$. Since the coefficient function $f_A(\chi-1)$ of $F_{A,2}$ is nonnegative, $F_{A,2}$ is a repulsion force aligned along $s(x_j)$  and leads to an additional advection along $s(x_j)$ compared to the case $\chi=1$. This repulsion force along $s(x_j)$ is the larger, the smaller $\chi$. In particular, for the force coefficients $f_A$ and $f_R$ in the K\"ucken-Champod model \eqref{eq:particlemodel}, given by \eqref{eq:attractionforcemodel} and \eqref{eq:repulsionforcemodel} with parameters in \eqref{eq:parametervaluesRepulsionAttraction}, the total force along $s$ is  purely repulsive  for $\chi$ sufficiently small as illustrated in Figure \ref{fig:sumofforcesVarychi}.

For the spatially homogeneous tensor field $T=\chi s \otimes s+l\otimes l$ with   $l=(1,0)$ and $s=(0,1)$ the solution is stretched along the vertical axis for $\chi<1$. The smaller the value of $\chi$, the larger the repulsion force and the more the solution is stretched along the vertical axis. For $\chi$ sufficiently small  stretching along the entire vertical axis is possible for solutions to the K\"ucken-Champod model \eqref{eq:particlemodel} because of purely repulsive forces along $s$.

\begin{figure}[htbp]
    \centering
    \subfloat[Along $l$]{\includegraphics[width=0.45\textwidth]{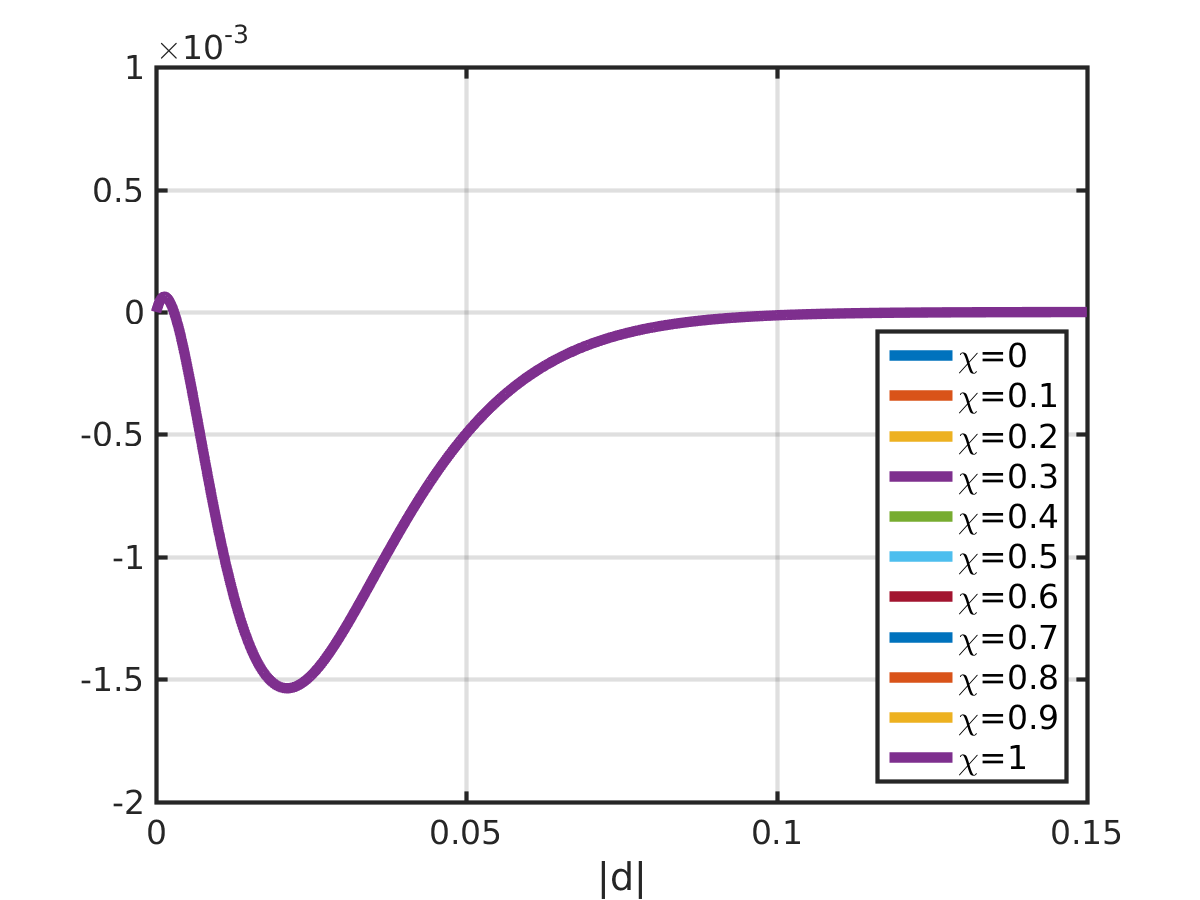}}
    \subfloat[Along $s$]{
        \includegraphics[width=0.45\textwidth]{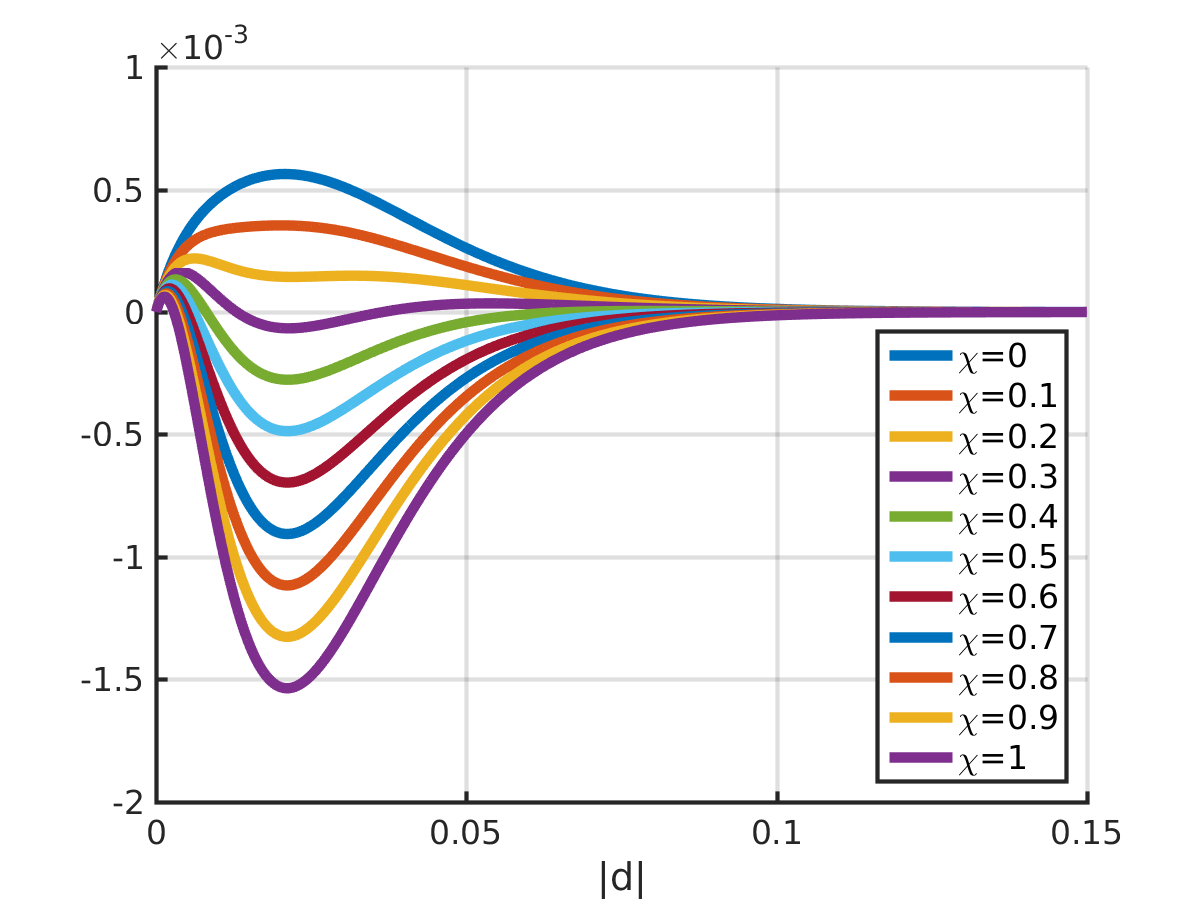}}
    \caption{Total force  along $l$ and $s$ (i.e. $(f_A+f_R)\cdot\text{id}$ and $(\chi f_A+f_R)\cdot\text{id}$ for coefficients $f_A$ in \eqref{eq:attractionforcemodel} and $f_R$ in \eqref{eq:repulsionforcemodel} of the attraction  and the repulsion force  for parameter values in \eqref{eq:parametervaluesRepulsionAttraction}, respectively) for different values of $\chi$}\label{fig:sumofforcesVarychi}
\end{figure}

\subsection{Impact of  spatially homogeneous tensor fields}\label{sec:interpretationvectorfield}
Let $\chi\in[0,1]$ and  consider the spatially homogeneous tensor field $T=\chi s \otimes s+l\otimes l$ with $l=(1,0)$ and $s=(0,1)$. The solution of the particle model \eqref{eq:particlemodel}  for any spatially homogeneous tensor field $\tilde{T}$ is a coordinate transform of the solution of the particle model \eqref{eq:particlemodel} for the  tensor field $T$. Similarly, for the analysis of equilibria of the microscopic model \eqref{eq:particlemodel} for $\tilde{T}$ it is sufficient to study the equilibria of \eqref{eq:particlemodel} for $T$. 
For a similar statement for the mean-field PDE \eqref{eq:macroscopiceq} we define the concept of an equilibrium  state. 
\begin{definition}[Equilibrium state of \eqref{eq:macroscopiceq}]\label{def:steadystate}
A Borel probability measure $\mu\in \mathcal{P}(\R^2)$ is said to be an equilibrium state of the mean-field PDE \eqref{eq:macroscopiceq} if 
\begin{align}\label{eq:steadystate}
K\in L^1_{\text{loc}}\bl \di \mu\br \quad\text{and}\quad K=0~\text{on}~\text{supp}(\mu)~ \mu\text{-a.e.}
\end{align}
where $K= F(\cdot,T)\ast \mu ~\mu\text{-a.e.}$
\end{definition} 
An equilibrium state of the mean-field equation \eqref{eq:macroscopiceq} for any spatially homogeneous tensor field $\tilde{T}$ is the coordinate transform of an equilibrium state to the mean-field equation \eqref{eq:macroscopiceq} for the tensor field $T$. For detailed computations see Appendix \ref{sec:appendix_computation}.

\subsection{Existence of equilibria}\label{sec:interpretationsteadystates}
Based on the discussion on the action of the total force in Section~\ref{sec:interpretationforce}   possible shapes of equilibria of the mean-field PDE \eqref{eq:macroscopiceq} depend on the choice of the parameter $\chi\in[0,1]$. To analyze the equilibria of the mean-field PDE \eqref{eq:macroscopiceq} in two  dimensions for any spatially homogeneous tensor field,   it is sufficient to consider the tensor field $T=\chi s \otimes s+l\otimes l$ with $l=(1,0)$ and $s=(0,1)$  in the sequel as outlined in Section \ref{sec:interpretationvectorfield}. Note that the forces along $l$ are assumed to be repulsive-attractive, while the forces along $s$ depend significantly on the choice of $\chi$ and may be repulsive,  repulsive-attractive-repulsive or repulsive-attractive. Further note that the forces only depend on the distance vector for spatially homogeneous tensor fields. To simplify the analysis, we make the following assumption on $F$ in addition to Assumption \ref{ass:propertyforce}  in this section: 
\begin{ass}\label{ass:propertyforceadditional}
We assume that $F$ is strictly decreasing along $l$ and $s$ on the interval $[0,d_e]$ for some $d_e>d_a$ where $d_a$ is defined in Assumption \ref{ass:propertyforce}. In particular, there exits $d_e>d_a$ such that $\chi f_A+f_R$ is strictly decreasing on $[0,d_e]$ for all $\chi\in[0,1]$.
\end{ass}


%

\subsubsection{Ellipse pattern}
Solutions to the the mean-field PDE \eqref{eq:macroscopiceq} for  $T=\chi s \otimes s+l\otimes l$ with $l=(1,0)$ and $s=(0,1)$ are stretched along the vertical axis by the discussion in Section \ref{sec:interpretationforce}. This motivates us to consider an ellipse whose major axis is parallel to the vertical axis. Because of  the spatial homogeneity of the tensor field it is sufficient to restrict ourselves to probability measures with center of mass $(0,0)$. 

\begin{definition}
Let $R>0$ and let $r\geq 0$. The ellipse state whose minor and major axis are of lengths $R$ and $R+r$, respectively,   is the probability measure which is uniformly distributed on $\{ x=(x_1,x_2)\in\R^2 \colon x_1=R\cos\phi,~x_2=(R+r)\sin\phi\}$. We denote this probability measure by $\delta_{(R,r)}$.
\end{definition}

First, we restrict ourselves to nontrivial ring states $\delta_{(R,0)}$ of radius $R>0$, i.e. we consider the special case of ellipse states where $r=0$.
The existence of ring equilibria for repulsive-attractive potentials that do not decay faster than $1/d^2$ as $d\to \infty$ has already been discussed in \cite{nonlocalinteraction}. However, the force coefficients \eqref{eq:repulsionforcemodel} and \eqref{eq:attractionforcemodel} in the K\"{u}cken-Champod model \cite{Merkel} decay exponentially fast as $d\to \infty$. Besides, a repulsive-attractive potential  exists for $\chi=1$ only by Remark \ref{rem:existencepotential}.
To analyze the ring equilibria we distinguish between the two cases $\chi=1$ and $\chi\in[0,1)$, starting with the case $\chi=1$. 
\begin{lemma}\label{lem:condRingsteadystate}
Let  $\chi=1$. The probability measure $\delta_{(R,0)}$ is  a  nontrivial ring equilibrium to \eqref{eq:macroscopiceq} for radius $R>0$ if and only if
\begin{align}\label{eq:condRingsteadystate}
\int_0^{\pi} (f_A+f_R)\bl R\sqrt{(1-\cos\phi)^2+\sin^2\phi}\br(1-\cos\phi)\di \phi=0.
\end{align}
\end{lemma}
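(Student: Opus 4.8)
The plan is to work directly from the equilibrium condition in Definition~\ref{def:steadystate}: the ring state $\delta_{(R,0)}$ is an equilibrium if and only if the convolution $K = F(\cdot,T)\ast\delta_{(R,0)}$ vanishes on the support of $\delta_{(R,0)}$, i.e.\ on the circle of radius $R$. Since $\chi=1$, the tensor field $T$ is the identity, so $F(d,T) = (f_A+f_R)(|d|)\,d$ is a radial force; the whole computation is therefore genuinely isotropic and the only subtlety is writing out the convolution against a uniform measure on a circle. I would parametrize the support by $x(\phi) = R(\cos\phi,\sin\phi)$ for $\phi\in[0,2\pi)$, and by rotational symmetry it suffices to check that $K$ vanishes at the single point $x_0 = (R,0)$.

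First I would write
\begin{align*}
K(x_0) = \frac{1}{2\pi}\int_0^{2\pi} (f_A+f_R)\bl|x_0 - x(\psi)|\br\,\bl x_0 - x(\psi)\br\,\di\psi,
\end{align*}
and compute $|x_0 - x(\psi)|^2 = R^2\bl(1-\cos\psi)^2 + \sin^2\psi\br = 2R^2(1-\cos\psi)$. The vector $x_0 - x(\psi) = R(1-\cos\psi, -\sin\psi)$. The key observation is a symmetry/parity argument: replacing $\psi$ by $-\psi$ (equivalently $2\pi-\psi$) leaves the scalar coefficient $(f_A+f_R)(|x_0-x(\psi)|)$ unchanged and leaves the first component $R(1-\cos\psi)$ unchanged while flipping the sign of the second component $-R\sin\psi$. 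Hence the second component of $K(x_0)$ integrates to zero automatically, and $K(x_0)=0$ reduces to the vanishing of its first component,
\begin{align*}
\frac{R}{2\pi}\int_0^{2\pi}(f_A+f_R)\bl R\sqrt{(1-\cos\psi)^2+\sin^2\psi}\br (1-\cos\psi)\,\di\psi = 0.
\end{align*}
Folding the integral over $[\pi,2\pi]$ onto $[0,\pi]$ via $\psi\mapsto 2\pi-\psi$ (again both the coefficient and $1-\cos\psi$ are invariant) doubles the integrand, so this is equivalent to \eqref{eq:condRingsteadystate}. Conversely, if \eqref{eq:condRingsteadystate} holds then $K$ vanishes at $x_0$ and, by rotational equivariance of the radial force, at every point of the circle, so $\delta_{(R,0)}$ is an equilibrium; I would also note $K\in L^1_{\mathrm{loc}}(\di\mu)$ follows from smoothness and integrability of $f_A+f_R$ in Assumption~\ref{ass:propertyforce}, so the regularity requirement in \eqref{eq:steadystate} is met.

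I do not expect any serious obstacle here; the statement is essentially a bookkeeping computation, and the one point needing a little care is justifying that checking $K=0$ at the single point $(R,0)$ suffices — this needs the observation that for $\chi=1$ the force is equivariant under rotations about the origin, so $K(R_\theta x_0) = R_\theta K(x_0)$ because the measure $\delta_{(R,0)}$ is rotation-invariant; hence $K$ vanishes everywhere on the support once it vanishes at one point. The other mild point is that $(1-\cos\psi)^2 + \sin^2\psi = 2(1-\cos\psi) \geq 0$ with equality only at $\psi = 0$, so the square root is well-defined and the integrand is continuous away from $\psi=0$, where integrability is guaranteed by the hypotheses on $f_A, f_R$. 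This is the only place a reader might worry about the integral being improper, so I would remark on it briefly.
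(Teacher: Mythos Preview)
Your argument is correct and follows essentially the same route as the paper: reduce to the single point $(R,0)$ by rotational symmetry, use the parity $\psi\mapsto 2\pi-\psi$ to kill the second component and fold the first onto $[0,\pi]$, and divide out the nonzero prefactor. If anything, your write-up is more explicit than the paper's (which compresses the symmetry step to a phrase) in justifying rotational equivariance and in noting the $L^1_{\mathrm{loc}}$ regularity from Assumption~\ref{ass:propertyforce}.
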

\begin{proof}
By Definition \ref{def:steadystate} and by symmetry it suffices to show for $\chi=1$ that there exists  $R>0$ such that 
\begin{align*}
(F(\cdot,T)\ast\delta_{(R,0)})((R,0))
=\int_0^{2\pi} F(R(1-\cos\phi,-\sin \phi),T)R\di \phi=0
\end{align*}
for nontrivial ring equilibria. A change of variables yields
$$\int_{\pi}^{2\pi} F(R(1-\cos\phi,-\sin \phi),T)R\di \phi=\int_{0}^{\pi} F(R(1-\cos\phi,\sin \phi),T)R\di \phi. $$
Hence, by using the simplified form of $F(d,T)=(f_A+f_R)(|d|)d$ for $\chi=1$  this implies that it is sufficient to show the existence of $R>0$ such that 
$$R^2\int_0^{\pi} (f_A+f_R)\bl R\sqrt{(1-\cos\phi)^2+\sin^2\phi}\br(1-\cos\phi)\di \phi=0.$$
Since we are interested in nontrivial ring equilibria with radius $R>0$ the condition finally reduces to \eqref{eq:condRingsteadystate}.
\end{proof}
\begin{proposition}\label{prop:ringsteadystate}
Let $\chi=1$. There exists at most one radius $\bar{R}\in (0,\frac{d_e}{2}]$ such that the ring state $\delta_{(\bar{R},0)}$ of radius $\bar{R}$ is a nontrivial  equilibrium to the mean-field PDE \eqref{eq:macroscopiceq}. If
\begin{align}\label{eq:conduniquesteadystate}
\int_0^{\pi} (f_A+f_R)\bl \frac{d_e}{2}\sqrt{(1-\cos\phi)^2+\sin^2\phi}\br(1-\cos\phi)\di \phi<0
\end{align}
there exists a unique $\bar{R}\in (\frac{d_a}{2},\frac{d_e}{2}]$ such that the ring state $\delta_{(\bar{R},0)}$ of radius $\bar{R}$ is a  nontrivial  equilibrium.
\end{proposition}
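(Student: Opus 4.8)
The plan is to reduce the statement to a monotonicity property of a single scalar function. Using $(1-\cos\phi)^2+\sin^2\phi=4\sin^2(\phi/2)$ and $1-\cos\phi=2\sin^2(\phi/2)$ on $[0,\pi]$, define for $R>0$
\begin{align*}
g(R):=\int_0^{\pi}(f_A+f_R)\bl 2R\sin(\phi/2)\br(1-\cos\phi)\di\phi ,
\end{align*}
so that, by Lemma \ref{lem:condRingsteadystate}, the ring state $\delta_{(R,0)}$ is a nontrivial equilibrium to \eqref{eq:macroscopiceq} if and only if $g(R)=0$. Since $f_R$ and $f_A$ are smooth and the weight $1-\cos\phi$ is bounded, $g$ is continuous on $(0,\infty)$.

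The first step is to show that $g$ is strictly decreasing on $(0,d_e/2]$. For $0<R_1<R_2\le d_e/2$ and $\phi\in(0,\pi]$ one has $0<2R_1\sin(\phi/2)<2R_2\sin(\phi/2)\le d_e$; since $f_A+f_R$ (which is $\chi f_A+f_R$ with $\chi=1$) is strictly decreasing on $[0,d_e]$ by Assumption \ref{ass:propertyforceadditional}, it follows that $(f_A+f_R)\bl 2R_1\sin(\phi/2)\br>(f_A+f_R)\bl 2R_2\sin(\phi/2)\br$ for all $\phi\in(0,\pi]$. Multiplying by $1-\cos\phi\ge0$, which is strictly positive on $(0,\pi]$, and integrating gives $g(R_1)>g(R_2)$. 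Hence $g$ has at most one zero in $(0,d_e/2]$, which establishes the first assertion of the proposition.

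For the second assertion it remains, under hypothesis \eqref{eq:conduniquesteadystate}, to produce a zero of $g$ in $(d_a/2,d_e/2]$. By the definition of $g$, the value $g(d_e/2)$ is precisely the left-hand side of \eqref{eq:conduniquesteadystate}, so $g(d_e/2)<0$. For $g(d_a/2)$ we note that, for $\phi\in(0,\pi)$, $0\le d_a\sin(\phi/2)<d_a$ and hence $(f_A+f_R)\bl d_a\sin(\phi/2)\br>0$ by Assumption \ref{ass:propertyforce}, while at $\phi=\pi$ continuity yields $(f_A+f_R)(d_a)\ge0$; since $1-\cos\phi>0$ on $(0,\pi]$, the integrand defining $g(d_a/2)$ is nonnegative and strictly positive on a set of positive measure, so $g(d_a/2)>0$. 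By continuity of $g$ and the intermediate value theorem there exists $\bar R\in(d_a/2,d_e/2)$ with $g(\bar R)=0$; by the strict monotonicity established above, $\bar R$ is the unique zero of $g$ in $(0,d_e/2]$, hence in $(d_a/2,d_e/2]$, and $\delta_{(\bar R,0)}$ is the corresponding unique nontrivial ring equilibrium.

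I do not anticipate a substantial obstacle here: the argument is essentially the intermediate value theorem combined with strict monotonicity, both of which follow directly from Assumptions \ref{ass:propertyforce} and \ref{ass:propertyforceadditional}. The only points needing a little care are the bookkeeping at the degenerate endpoint $\phi=0$, where the weight $1-\cos\phi$ vanishes, and at $\phi=\pi$, where the argument of $f_A+f_R$ reaches $d_a$ or $2R$ and only a non-strict sign is available; both are dealt with by the positive-measure observations above and do not spoil the strict inequalities.
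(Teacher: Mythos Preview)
Your proof is correct and follows essentially the same approach as the paper: both define the scalar function $G(R)$ (your $g(R)$) equal to the left-hand side of \eqref{eq:condRingsteadystate}, show it is strictly decreasing on $(0,d_e/2]$ by Assumption~\ref{ass:propertyforceadditional}, observe that it is positive for $R\le d_a/2$ by Assumption~\ref{ass:propertyforce} and negative at $R=d_e/2$ by \eqref{eq:conduniquesteadystate}, and conclude via the intermediate value theorem. The only cosmetic differences are that you make the half-angle simplification $\sqrt{(1-\cos\phi)^2+\sin^2\phi}=2\sin(\phi/2)$ explicit and establish monotonicity by a direct comparison rather than by differentiating under the integral sign; you are also more careful about the endpoint $\phi=0,\pi$ bookkeeping than the paper, which simply asserts the monotonicity.
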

\begin{proof}
Consider the left-hand side of \eqref{eq:condRingsteadystate} as a function of $R$ denoted by $G(R)$. By deriving $G(R)$ with respect to $R$ and using Assumption \ref{ass:propertyforceadditional} one can easily see that $G(R)$ is strictly decreasing as a function of $R$ on $[0,\frac{d_e}{2}]$. Note that $G(0)>0$, $G(R)>0$ for $R\leq\frac{d_a}{2}$ and $f_A, f_R$ are continuous by Assumption \ref{ass:propertyforce} on the total force. Since \eqref{eq:conduniquesteadystate} is equivalent to $G(\frac{d_e}{2})<0$ this concludes the proof. 
\end{proof}

One can easily check that \eqref{eq:conduniquesteadystate} is satisfied for the force coefficients \eqref{eq:repulsionforcemodel} and \eqref{eq:attractionforcemodel} in the K\"{u}cken-Champod model \eqref{eq:particlemodel} with parameter values in \eqref{eq:parametervaluesRepulsionAttraction} if $d_e$ is the argument of the minimum of $f_A+f_R$, see Assumption \ref{ass:propertyforceadditional}. In particular, this implies that there exists a unique nontrivial ring equilibrium of radius $R\in (\frac{d_a}{2},\frac{d_e}{2}]$ to the mean-field PDE \eqref{eq:macroscopiceq} for the forces in the K\"{u}cken-Champod model  for $\chi=1$.

The case $\chi\in [0,1)$ can be analyzed similarly as the  one for $\chi=1$ for ring patterns  except that some of the symmetry arguments do not hold.
\begin{proposition}
Let $\chi\in[0,1)$. There exists no $R\in (0,\frac{d_e}{2}]$ such that the ring state $\delta_{(R,0)}$ is an equilibrium to the mean-field PDE \eqref{eq:macroscopiceq}. 
\end{proposition}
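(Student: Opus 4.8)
The plan is to show that the equilibrium condition at a point of the ring state $\delta_{(R,0)}$ fails when $\chi<1$, and the decisive observation is that the symmetry argument that made Lemma~\ref{lem:condRingsteadystate} go through collapses: the tensor field $T=\chi s\otimes s+l\otimes l$ with $l=(1,0)$, $s=(0,1)$ treats the two axes differently, so the reflection $\phi\mapsto-\phi$ that cancelled the tangential component of the force no longer does so. Concretely, I would take a candidate ring of radius $R\in(0,d_e/2]$ and evaluate $K=F(\cdot,T)\ast\delta_{(R,0)}$ at the point $p=(R,0)$ (or, by convenience, at $p=(0,R)$), using that for a spatially homogeneous tensor field the force only depends on the distance vector so that
\begin{align*}
(F(\cdot,T)\ast\delta_{(R,0)})((R,0))=R\int_0^{2\pi}F\bl R(1-\cos\phi,-\sin\phi),T\br\di\phi.
\end{align*}
Splitting $F=F_A+F_R$ and using $F_R(d)=f_R(|d|)d$, $F_A(d,T)=f_A(|d|)Td$, the integrand's two Cartesian components can be written out explicitly; the key point is to isolate the component of $K(p)$ along the \emph{tangential} direction at $p$ — i.e.\ the second ($x_2$) component — and show it cannot vanish.

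The main step is a parity computation. After the change of variables $\phi\mapsto 2\pi-\phi$ on $[\pi,2\pi]$, the first (radial, $x_1$) component of the integrand is even and survives, exactly as in Lemma~\ref{lem:condRingsteadystate}; but the second component involves $-f_R(|d|)\sin\phi - \chi f_A(|d|)\sin\phi = -(f_R+\chi f_A)(|d|)\sin\phi$ with $|d|=R\sqrt{(1-\cos\phi)^2+\sin^2\phi}$, which depends only on $\cos\phi$ and is therefore even under $\phi\mapsto-\phi$, while $\sin\phi$ is odd — so that term integrates to zero over $[0,2\pi]$ regardless of $\chi$. This shows the force at $(R,0)$ is automatically radial, so evaluating at $(R,0)$ alone does \emph{not} detect the anisotropy; one must instead evaluate $K$ at a generic point $(R\cos\psi,R\sin\psi)$ with $\psi\neq 0,\pi/2,\pi,3\pi/2$ and show the component of $K$ \emph{tangent to the ring} at that point is nonzero. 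The tangential component at angle $\psi$ picks up precisely the off-diagonal, $\chi$-dependent piece $f_A(|d|)(\chi-1)(s\cdot d)s$ of $F_A$ (see the decomposition $F_A=F_{A,1}+F_{A,2}$ in Section~\ref{sec:interpretationforce}), and I would show this contributes a term of fixed sign (determined by $\chi<1$ and $f_A\le 0$) that cannot be cancelled by the conservative part $F_{A,1}+F_R$, whose contribution to the tangential direction vanishes by the same argument used for ring equilibria when $\chi=1$. Hence $K\not\equiv 0$ on $\operatorname{supp}(\delta_{(R,0)})$, contradicting Definition~\ref{def:steadystate}.

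More precisely, I expect the cleanest route is: fix $\psi\in(0,\pi/2)$, let $p=R(\cos\psi,\sin\psi)$, and compute the component of $(F(\cdot,T)\ast\delta_{(R,0)})(p)$ along the unit tangent $\tau=(-\sin\psi,\cos\psi)$. Write the contribution of a source point $q=R(\cos\phi,\sin\phi)$, so $d=p-q$; then $F_R(d)\cdot\tau+F_{A,1}(d)\cdot\tau=(f_R+f_A)(|d|)\,d\cdot\tau$, and one checks — via the substitution $\phi\mapsto 2\psi-\phi$, under which $|d|$ is invariant and $d\cdot\tau$ changes sign — that $\int_0^{2\pi}(f_R+f_A)(|d|)(d\cdot\tau)\di\phi=0$. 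What remains is $\int_0^{2\pi}f_A(|d|)(\chi-1)(s\cdot d)(s\cdot\tau)\di\phi$ with $s=(0,1)$, i.e.\ $(\chi-1)\cos\psi\int_0^{2\pi}f_A(|d|)(R\sin\psi-R\sin\phi)\di\phi$. The integral $\int_0^{2\pi}f_A(|d|)\sin\phi\,\di\phi$ vanishes by the parity argument above (it is odd under $\phi\mapsto-\phi$ after shifting to centre the argument of $f_A$ at $\phi=0$… more carefully, under $\phi\mapsto 2\cdot 0-\phi$ relative to the symmetry axis of $|d|$), so the tangential force at $p$ equals $R(\chi-1)\sin\psi\cos\psi\int_0^{2\pi}f_A(|d|)\di\phi$. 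Since $f_A\le 0$ is not identically zero, this integral is strictly negative; since $\chi-1<0$ and $\sin\psi\cos\psi>0$, the tangential force is strictly positive, so $K(p)\neq 0$ and $\delta_{(R,0)}$ is not an equilibrium.

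\textbf{Anticipated obstacle.} The delicate part is bookkeeping the two different symmetry reductions — the $\phi\mapsto-\phi$ parity of $|d|$ as a function of $\phi$ is about reflection across the \emph{source} configuration's symmetry axis, whereas the $\phi\mapsto 2\psi-\phi$ substitution reflects across the line through $p$ — and making sure that each claimed vanishing integral really is odd under the relevant involution while the surviving $\int_0^{2\pi}f_A(|d|)\di\phi$ is genuinely nonzero (it is, since $f_A\le 0$ and $f_A\not\equiv 0$, and $|d|$ stays bounded on the ring). One should also confirm the argument is uniform over $R\in(0,d_e/2]$ — but since the conclusion only uses the sign of $f_A$ and not Assumption~\ref{ass:propertyforceadditional}, this is immediate, and in fact the restriction $R\le d_e/2$ can likely be dropped. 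A secondary point to handle with care is the choice of evaluation point: it is essential to pick $\psi\notin\{0,\pi/2,\pi,3\pi/2\}$, since at those four points $\sin\psi\cos\psi=0$ and the anisotropic term degenerates, which is exactly why evaluating at $(R,0)$ or $(0,R)$ alone is insufficient.
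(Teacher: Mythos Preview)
Your approach is genuinely different from the paper's and, once a computational slip is repaired, correct. The paper never leaves the axis-aligned points: it evaluates the \emph{radial} component of $K=F(\cdot,T)\ast\delta_{(R,0)}$ at $(R,0)$ and at $(0,R)$. The first yields condition~\eqref{eq:condRingsteadystate}, which is independent of $\chi$; the second yields (after the substitution $\phi\mapsto\phi+\pi/2$) the same integral with $\chi f_A$ in place of $f_A$. Subtracting the two gives $(1-\chi)\int_0^\pi f_A\bl R\sqrt{(1-\cos\phi)^2+\sin^2\phi}\br(1-\cos\phi)\,d\phi=0$, impossible for $\chi<1$ since $f_A\le 0$ is not identically zero. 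So the paper's route is shorter and purely radial, whereas your tangential argument at a generic angle $\psi$ is more geometric and makes the role of the non-conservative piece $F_{A,2}$ explicit. Both approaches use only the sign of $f_A$ and, as you anticipated, actually work for all $R>0$, not just $R\le d_e/2$.

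The error your ``anticipated obstacle'' paragraph correctly smells but does not resolve is the claim that $\int_0^{2\pi}f_A(|d|)\sin\phi\,d\phi=0$. This is false for $\psi\neq 0$: writing $\theta:=\phi-\psi$, the factor $|d|=2R|\sin(\theta/2)|$ is even in $\theta$, but $\sin\phi=\sin\theta\cos\psi+\cos\theta\sin\psi$ has the even part $\cos\theta\sin\psi$, so the integral equals $\sin\psi\int_{-\pi}^{\pi}f_A(2R|\sin(\theta/2)|)\cos\theta\,d\theta$, generically nonzero. The fix is painless: substitute $\theta=\phi-\psi$ directly in $\int_0^{2\pi}f_A(|d|)(\sin\psi-\sin\phi)\,d\phi$ and discard the genuinely odd $\sin\theta$-term to obtain $\sin\psi\int_{-\pi}^{\pi}f_A(2R|\sin(\theta/2)|)(1-\cos\theta)\,d\theta$, which is strictly negative since $f_A\le 0$ and $1-\cos\theta\ge 0$. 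The tangential force at $p$ is therefore
\[
R(\chi-1)\sin\psi\cos\psi\int_{-\pi}^{\pi}f_A\bl 2R|\sin(\theta/2)|\br(1-\cos\theta)\,d\theta,
\]
and your sign analysis goes through unchanged with this corrected integrand.
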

\begin{proof}
For $\chi=1$, $(F(\cdot,T)\ast\delta_{(R,0)})((R,0))=0$ is equivalent to \eqref{eq:condRingsteadystate} by Lemma \ref{lem:condRingsteadystate}, based on the property  $(F(\cdot,T)\ast\delta_{(R,0)})((R,0))\cdot s=0$. Since $$F(d,T)=f_A(|d|)(d_1,\chi d_2)+f_R(|d|)d$$ where $d=(d_1,d_2)$, \eqref{eq:condRingsteadystate} also has to be satisfied for $\chi\in [0,1)$. Similarly as in the proof of Lemma~\ref{lem:condRingsteadystate} one can show that 
$$(F(\cdot,T)\ast\delta_{(R,0)})((0,R))=\int_0^{2\pi} F(R(-\cos \phi,1-\sin\phi),T)R\di \phi=0$$
is equivalent to 
\begin{align}\label{eq:condRingsteadystateChigeneral}
\int_{\pi/2}^{3\pi/2} \bl \chi f_A+f_R\br\bl R\sqrt{\cos^2\phi+(1-\sin\phi)^2}\br\bl 1-\sin\phi\br\di \phi=0
\end{align}
for $R>0$. Note that \eqref{eq:condRingsteadystate} is equivalent to \eqref{eq:condRingsteadystateChigeneral} for $\chi=1$ by symmetry so that the equilibrium of radius  $R\in(0,\frac{d_e}{2}]$ from Proposition \ref{prop:ringsteadystate} satisfies \eqref{eq:condRingsteadystate} and \eqref{eq:condRingsteadystateChigeneral} simultaneously for $\chi=1$. However, \eqref{eq:condRingsteadystate} and \eqref{eq:condRingsteadystateChigeneral} are not satisfied simultaneously for any $R\in (0,\frac{d_2}{2}]$ and any $\chi\in[0,1)$ which concludes the proof.
\end{proof}
Next, we analyze the ellipse pattern.
\begin{corollary}\label{lem:ellipsehelp}
Let $\chi\in[0,1]$ be given and define $$w_1(\phi,R,r)=\sqrt{R^2(1-\cos \phi)^2+(R+r)^2\sin^2\phi},\quad w_2(\phi,R,r)=\sqrt{R^2\sin^2\phi+(R+r)^2\cos^2\phi}.$$
Then, necessary conditions for a stationary ellipse state $\delta_{(R,r)}$ where $R,r\geq 0$ are given by
\begin{align}\label{eq:condEllipsesteadystate1}
\int_0^{\pi} (f_A+f_R)\bl w_1(\phi,R,r)\br R\bl 1-\cos\phi\br w_2(\phi,R,r)\di \phi=0
\end{align}
and
\begin{align}\label{eq:condEllipsesteadystate2}
\int_{\pi/2}^{3\pi/2} \bl \chi f_A+f_R\br\bl w_3(\phi,R,r)\br(R+r)\bl 1-\sin\phi\br w_2(\phi,R,r)\di \phi=0
\end{align}
where $w_3(\phi,R,r)=\sqrt{R^2\cos^2\phi+(R+r)^2(1-\sin\phi)^2}$.
\end{corollary}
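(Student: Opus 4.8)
The plan is to mimic the derivation of the ring conditions in Lemma~\ref{lem:condRingsteadystate} and the preceding proposition, but now evaluating the equilibrium condition $K = F(\cdot,T)\ast\delta_{(R,r)} = 0$ at two judiciously chosen points of the ellipse rather than the circle. Recall that $\delta_{(R,r)}$ is the pushforward of the uniform measure on $[0,2\pi)$ under $\phi\mapsto (R\cos\phi,(R+r)\sin\phi)$. By the spatial homogeneity of $T$ the force depends only on the distance vector, so at a point $x$ on the ellipse the convolution is an integral over $\phi$ of $F(x - (R\cos\phi,(R+r)\sin\phi),T)$. The two natural test points are the endpoint of the minor axis, $x=(R,0)$, and the endpoint of the major axis, $x=(0,R+r)$; evaluating $K=0$ there gives two scalar conditions. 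As in Definition~\ref{def:steadystate}, $\delta_{(R,r)}$ being an equilibrium requires $K=0$ on its support, so these are genuine necessary conditions.

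First I would treat the point $x=(R,0)$. The displacement vector is $d = (R(1-\cos\phi),\,-(R+r)\sin\phi)$, whose length is exactly $w_1(\phi,R,r)$. Using $F(d,T) = f_A(|d|)(d_1,\chi d_2) + f_R(|d|)d$, the first component of $K((R,0))$ is $\int_0^{2\pi}(f_A+f_R)(w_1)\,R(1-\cos\phi)\,\di\phi$, and the substitution $\phi\mapsto 2\pi-\phi$ folds $[\pi,2\pi)$ onto $[0,\pi)$ (note $w_1$ is even in this reflection and $1-\cos\phi$ is too), producing a factor $2$ and the integral $\int_0^{\pi}(f_A+f_R)(w_1)\,R(1-\cos\phi)\,\di\phi$. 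This must vanish. But this is not quite \eqref{eq:condEllipsesteadystate1}: there is an extra factor $w_2(\phi,R,r)$ in the statement. The resolution is that the equilibrium condition in Definition~\ref{def:steadystate} is $K=0$ $\mu$-a.e., and the correct way to turn "$K$ vanishes at every point of the ellipse" into a single integral identity is to integrate the tangential (or appropriately weighted) component of $K$ against the natural parametrization; the arclength element of the ellipse at parameter $\phi$ is proportional to $w_2(\phi,R,r)=\sqrt{R^2\sin^2\phi+(R+r)^2\cos^2\phi}$. So I would instead derive \eqref{eq:condEllipsesteadystate1} by writing out the stationarity of the energy-type functional / the component of $K$ along the outward normal integrated over the curve, which is exactly how such necessary conditions are obtained for non-radial equilibria; the weight $w_2$ is the Jacobian of the parametrization. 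Concretely: the condition that $\delta_{(R,r)}$ is stationary is that the first variation of the interaction functional along normal perturbations of the curve vanishes, and this variation picks up the line element $w_2\,\di\phi$.

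Next I would repeat the computation at $x=(0,R+r)$. Here the displacement is $d=(-R\cos\phi,\,(R+r)(1-\sin\phi))$ with length $w_3(\phi,R,r)$, and the relevant (vertical) component of $F$ is $(\chi f_A + f_R)(w_3)\,(R+r)(1-\sin\phi)$. The substitution $\phi\mapsto \pi-\phi$ folds the circle so that the integral localizes to $\phi\in[\pi/2,3\pi/2]$ (on which $1-\sin\phi\geq 0$ and $w_3$ is symmetric under this reflection), and integrating against the same line element $w_2\,\di\phi$ yields \eqref{eq:condEllipsesteadystate2}. This is the exact analogue, for the anisotropic case, of passing from \eqref{eq:condRingsteadystate} to \eqref{eq:condRingsteadystateChigeneral} in the previous proposition, and indeed setting $r=0$ collapses $w_1,w_2,w_3$ appropriately and recovers both ring conditions, which is a good consistency check I would mention.

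The main obstacle is being precise about \emph{why} the single integral identities \eqref{eq:condEllipsesteadystate1}--\eqref{eq:condEllipsesteadystate2}, rather than the raw "$K=0$ at two points" statements, are the right necessary conditions — i.e.\ correctly identifying the line-element weight $w_2$ and the components of $K$ one is allowed to integrate. For the ring the two coincide up to a constant because of rotational symmetry, so Lemma~\ref{lem:condRingsteadystate} could afford to be casual; for the ellipse one must argue that stationarity of $\delta_{(R,r)}$ forces the tangential component of $K$ along the curve to have vanishing appropriate moments, and the cleanest route is via the first variation of the (formally defined) interaction energy restricted to the family of ellipses, where the $w_2$ appears as $|\partial_\phi(R\cos\phi,(R+r)\sin\phi)|/\sqrt{\sin^2\phi+\cos^2\phi}$-type Jacobian. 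Everything else — the change-of-variables foldings, extracting the right force components using $F(d,T)=f_A(|d|)(d_1,\chi d_2)+f_R(|d|)d$ — is routine bookkeeping parallel to the $r=0$ case already carried out, and since the statement only claims these are \emph{necessary} conditions, no uniqueness or monotonicity analysis is needed here.
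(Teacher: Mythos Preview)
Your overall plan --- evaluate $K=F(\cdot,T)\ast\delta_{(R,r)}$ at the two axis endpoints $(R,0)$ and $(0,R+r)$, use the reflection symmetries to kill one component of $K$ and fold the other to a half-interval --- is exactly what the paper does. But there is a genuine gap in your handling of the weight $w_2$.

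You begin by asserting that $\delta_{(R,r)}$ is the pushforward of the uniform measure on $[0,2\pi)$ under $\phi\mapsto(R\cos\phi,(R+r)\sin\phi)$. That is not what the definition says: $\delta_{(R,r)}$ is the probability measure \emph{uniformly distributed on the ellipse}, i.e.\ the normalized arclength measure on the curve. Consequently, for any test function $g$,
\[
\int g\,\di\delta_{(R,r)} \;=\; C\int_0^{2\pi} g\bigl(R\cos\phi,(R+r)\sin\phi\bigr)\,\bigl|\partial_\phi(R\cos\phi,(R+r)\sin\phi)\bigr|\,\di\phi,
\]
and the speed factor is precisely $w_2(\phi,R,r)=\sqrt{R^2\sin^2\phi+(R+r)^2\cos^2\phi}$. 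So the weight $w_2$ already appears in the single pointwise evaluation $K((R,0))=\int F((R,0)-y,T)\,\di\delta_{(R,r)}(y)$; it is \emph{not} obtained by any additional integration of $K$ along the curve. This is exactly how the paper writes the convolution at $(R,0)$ and at $(0,R+r)$.

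Your proposed fix --- recover $w_2$ from the first variation of an ``energy-type functional'' along normal perturbations of the ellipse --- cannot work here in any case: by Remark~\ref{rem:existencepotential} the force $F$ is non-conservative for $\chi\in[0,1)$, so there is no interaction energy to vary. Once you correct the interpretation of $\delta_{(R,r)}$, the rest of your argument (the symmetry $e_2\cdot K((R,0))=0$, the folding $[0,2\pi)\to[0,\pi]$ for \eqref{eq:condEllipsesteadystate1} and $[0,2\pi)\to[\pi/2,3\pi/2]$ for \eqref{eq:condEllipsesteadystate2}) goes through exactly as in the paper, and no variational detour is needed.
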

\begin{proof}
For ellipse equilibria we require
$(F(\cdot,T)\ast\delta_{(R,r)})((R,0))=0$ implying $$\int_0^{2\pi} F((R(1-\cos \phi),-(R+r)\sin\phi),T)\sqrt{R^2\sin^2\phi+(R+r)^2\cos^2\phi}\di \phi=0.$$
Since $e_2\cdot (F(\cdot,T)\ast\delta_{(R,r)})((R,0))=0$ by symmetry for any $\chi\in[0,1)$ where $e_2=(0,1)$ and $$F(d,T)=\bl f_A(|d|)+f_R(|d|)\br\begin{pmatrix}
1 & 0\\0 & \chi
\end{pmatrix}d$$
this implies that it is sufficient to require \eqref{eq:condEllipsesteadystate1} 
where $$w_2(\phi,R,r)=\sqrt{R^2\sin^2\phi+(R+r)^2\cos^2\phi}.$$
Similarly,  $$(F(\cdot,T)\ast\delta_{(R,r)})((0,R+r))=C\int_0^{2\pi} F((-R\cos \phi,(R+r)(1-\sin\phi)),T)w_2(\phi,R,r)\di \phi=0$$ for a normalization constant $C$ reduces to the necessary condition \eqref{eq:condEllipsesteadystate2}.
\end{proof}

In the sequel, we denote the left-hand side of \eqref{eq:condEllipsesteadystate1} by $G(R,r)$.

\begin{ass}\label{ass:condellipsedecrease}
Given $r\in[0,d_e)$ we assume that there exists $R_{int}\in(0,R_e)$ such that 
\begin{align*}
\frac{\di }{\di R} G(R,r)>0\quad\text{for}\quad R\in\bl 0,R_{int}\br\qquad \text{and}\qquad\frac{\di }{\di R} G(R,r)<0\quad\text{for}\quad R\in\bl R_{int},R_e\br.
\end{align*}
\end{ass}

\begin{rem}
Since $G(0,r)=0$ and Assumption \ref{ass:condellipsedecrease} implies that for $r\in[0,d_e)$ given we have $G(R,r)>0$ for all $R\in\bl 0,R_{int}\br$. Besides, the uniqueness of stationary ellipse states $\delta_{(R,r)}$ for  $r\in[0,d_e)$ given is guaranteed by Assumption \ref{ass:condellipsedecrease}.
\end{rem}
We have the following existence result for nontrivial ellipse states, including rings for $R>0$ and $r=0$.
\begin{corollary}\label{cor:existenceellipse}
Let $r\in[0,d_e)$ and let $R_e>0$ such that
\begin{align}\label{eq:condEllipsesteadystatehelp}
w_1(\phi,R,r)\leq d_e \quad \text{for all}\quad \phi\in [0,\pi],~R\in [0,R_e]
\end{align}
is satisfied and assume that
\begin{align}\label{eq:condEllipsesteadystateR}
\int_0^{\pi} (f_A+f_R)\bl w_1(\phi,R,r)\br \bl 1-\cos\phi\br\bl R_e^2+R_e r\cos^2\phi\br \di \phi<0.
\end{align}
holds. Further define $$G_1(R,r)=\int_0^{\pi}(f_A+f_R)\bl w_1(\phi,R,r)\br \bl 1-\cos\phi\br\di \phi$$
and $$G_2(R,r)=\int_0^{\pi}(f_A+f_R)\bl w_1(\phi,R,r)\br \bl 1-\cos\phi\br\cos^2\phi\di \phi.$$
If  $r$  satisfies 
\begin{align}\label{eq:condEllipsesteadystaterpos}
\min\left\{G_1(0,r),G_2(0,r)\right\}>0
\end{align}
there exists an $R\in(0,R_e)$ such that the necessary condition \eqref{eq:condEllipsesteadystate1}  for a nontrivial stationary ellipse state $\delta_{(R,r)}$  to the mean-field PDE \eqref{eq:macroscopiceq} are satisfied. For $r$ satisfying
\begin{align}\label{eq:condEllipsesteadystaterneg}
\max\left\{G_1(0,r),G_2(0,r)\right\}<0
\end{align}
there exists no $R\in(0,R_e)$ such that the ellipse $\delta_{(R,r)}$ is an equilibrium to the mean-field PDE \eqref{eq:macroscopiceq} and the trivial ellipse state $\delta_{(0,r)}$ is the only equilibrium.
If, for $r\in[0,d_e)$ given, Assumption~\ref{ass:condellipsedecrease} is satisfied, then there exists a unique $R\in (R_{int},R_e)$ such that the necessary condition \eqref{eq:condEllipsesteadystate1} for a nontrivial stationary ellipse state $\delta_{(R,r)}$ is satisfied.
\end{corollary}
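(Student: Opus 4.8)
By Corollary~\ref{lem:ellipsehelp}, $\delta_{(R,r)}$ can only be an ellipse equilibrium of \eqref{eq:macroscopiceq} if \eqref{eq:condEllipsesteadystate1} holds, so the entire statement reduces to locating the zeros on $(0,R_e)$ of the scalar function $G(\cdot,r)$, the left-hand side of \eqref{eq:condEllipsesteadystate1}; since $G(0,r)=0$ we are after a \emph{positive} zero. The computational input is the elementary identity
\[
\bl R\,w_2(\phi,R,r)\br^{2}=\bl R^{2}+Rr\cos^{2}\phi\br^{2}+\bl Rr\sin\phi\cos\phi\br^{2},
\]
which follows from $w_2^{2}=R^{2}+(2Rr+r^{2})\cos^{2}\phi$ and gives the two-sided bound
\[
R^{2}+Rr\cos^{2}\phi\ \le\ R\,w_2(\phi,R,r)\ \le\ R^{2}+Rr\qquad(\phi\in[0,\pi]),
\]
the upper bound coming from $w_2\le R+r$. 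Two structural facts are used throughout: $\phi\mapsto w_1(\phi,R,r)$ is increasing in $R$, and by \eqref{eq:condEllipsesteadystatehelp} together with Assumption~\ref{ass:propertyforceadditional} every value $w_1(\phi,R,r)$ occurring for $R\in[0,R_e]$ lies in $[0,d_e]$, where $f_A+f_R$ (i.e.\ $\chi f_A+f_R$ with $\chi=1$) is continuous and strictly decreasing; hence $R\mapsto G_1(R,r)$ and $R\mapsto G_2(R,r)$ are continuous and nonincreasing, and $w_1(\phi,R,r)\to r|\sin\phi|$ uniformly as $R\to0^{+}$.

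For the behaviour near $R=0$ I split $[0,\pi]$ into the set $\{w_1(\phi,R,r)<d_a\}$, where $f_A+f_R>0$, and its complement, where $f_A+f_R<0$. On the latter I insert the lower bound $R\,w_2\ge R^{2}+Rr\cos^{2}\phi$ and on the former the upper bound $R\,w_2\le R^{2}+Rr$ (the opposite choices give the reverse inequality); passing to the limit $R\to0^{+}$ with dominated convergence and the continuity of $G_1(\cdot,r),G_2(\cdot,r)$, the sign of $G(R,r)$ for small $R>0$ is governed by $G_1(0,r)$ and $G_2(0,r)$. Thus \eqref{eq:condEllipsesteadystaterpos} forces $G(R,r)>0$ for all sufficiently small $R>0$, while \eqref{eq:condEllipsesteadystaterneg} forces $G(R,r)<0$ there; in the latter case the same bounds together with the monotonicity in $R$ of $G_1(\cdot,r),G_2(\cdot,r)$ propagate $G(R,r)<0$ to all of $(0,R_e)$, so \eqref{eq:condEllipsesteadystate1} has no positive solution in $(0,R_e)$ and no $\delta_{(R,r)}$ with $R\in(0,R_e)$ can be an equilibrium.

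For the behaviour at $R=R_e$ I would write $G(R_e,r)=I_1+I_2$, where $I_1=\int_0^{\pi}(f_A+f_R)\bl w_1(\phi,R_e,r)\br(1-\cos\phi)\bl R_e^{2}+R_er\cos^{2}\phi\br\di\phi$ and $I_2=\int_0^{\pi}(f_A+f_R)\bl w_1(\phi,R_e,r)\br(1-\cos\phi)\bl R_ew_2(\phi,R_e,r)-R_e^{2}-R_er\cos^{2}\phi\br\di\phi$. Condition \eqref{eq:condEllipsesteadystateR} read at $R=R_e$ says $I_1<0$; in $I_2$ the factor $(1-\cos\phi)\bl R_ew_2-R_e^{2}-R_er\cos^{2}\phi\br$ is nonnegative (by the lower bound above) and vanishes at $\phi\in\{0,\pi/2,\pi\}$, so the integrand of $I_2$ is $\le0$ wherever $f_A+f_R\le0$ and its positive part — coming from the region $\{w_1(\phi,R_e,r)<d_a\}$ near $\phi=0$, where in addition $1-\cos\phi$ is small — is outweighed by the strict negativity of $I_1$; hence $G(R_e,r)<0$. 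Combining this with $G(R,r)>0$ for small $R>0$ under \eqref{eq:condEllipsesteadystaterpos} and continuity of $G(\cdot,r)$, the intermediate value theorem yields $R\in(0,R_e)$ with $G(R,r)=0$. If in addition Assumption~\ref{ass:condellipsedecrease} holds, then $G(\cdot,r)$ increases strictly on $(0,R_{int})$ and decreases strictly on $(R_{int},R_e)$ with $G(0,r)=0$, so $G>0$ on $(0,R_{int}]$ and $G$ decreases strictly from $G(R_{int},r)>0$ to $G(R_e,r)<0$ on $(R_{int},R_e)$; hence the zero is unique and lies in $(R_{int},R_e)$.

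The main obstacle is the sign change of $f_A+f_R$ at $d_a$: each of the bounds on $R\,w_2$ points in the convenient direction only where $f_A+f_R$ carries the matching sign, so both endpoint analyses hinge on showing that the regions where $f_A+f_R$ has the opposite sign — which for $r<d_a$ and small $R$ are empty, and in general sit near $\phi\in\{0,\pi\}$, where the geometric weights $1-\cos\phi$ and $R\,w_2-(R^{2}+Rr\cos^{2}\phi)$ are small — contribute only a negligible amount of the wrong sign, so that the hypotheses, phrased purely in terms of $G_1$, $G_2$ and the lower bound $R^{2}+Rr\cos^{2}\phi$, suffice to pin down the sign of $G$ at both ends of the interval.
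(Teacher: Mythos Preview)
Your route diverges from the paper's, and you have correctly identified the real obstacle. The paper does not attempt to bound the true $G(\cdot,r)$; it simply replaces $w_2$ by $R+r\cos^{2}\phi$ --- equivalently, $R\,w_2$ by your lower bound $R^{2}+Rr\cos^{2}\phi$ --- and then analyzes the resulting \emph{approximated} functional, which factors as $R\bl R\,G_1(R,r)+r\,G_2(R,r)\br$. Since $G_1(\cdot,r)$ and $G_2(\cdot,r)$ are each strictly decreasing on $[0,R_e]$ (because $f_A+f_R$ is strictly decreasing on $[0,d_e]$ and $w_1$ is increasing in $R$), the endpoint signs and the intermediate-value step are immediate there: under \eqref{eq:condEllipsesteadystaterpos} each $G_i$ starts positive, \eqref{eq:condEllipsesteadystateR} makes the combination negative at $R_e$, and under \eqref{eq:condEllipsesteadystaterneg} both $G_i$ remain negative on all of $[0,R_e]$. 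Notice that hypothesis \eqref{eq:condEllipsesteadystateR} is phrased precisely for this approximated expression --- that is why $w_2$ is absent from it.

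Your attempt to work with the genuine $G$ via two-sided bounds on $R\,w_2$ is more rigorous in intent, and the obstacle you flag in your last paragraph is exactly the point: the sign change of $f_A+f_R$ at $d_a$ means neither bound alone pins down the sign of $G$, and your phrases ``outweighed by the strict negativity of $I_1$'' and ``contribute only a negligible amount of the wrong sign'' are assertions, not arguments. The paper does not close this gap --- it sidesteps it by passing to the approximation --- so what you are missing is not a hidden trick. As written, both arguments establish the conclusion only at the level of the approximated condition; a fully rigorous version for \eqref{eq:condEllipsesteadystate1} itself would require either an explicit quantitative control of the remainder $R\,w_2-(R^{2}+Rr\cos^{2}\phi)$, or a strengthening of \eqref{eq:condEllipsesteadystateR} to a direct sign hypothesis on $G(R_e,r)$.
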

\begin{rem}
Condition \eqref{eq:condEllipsesteadystatehelp} is related to the assumption that $f_A+f_R$ is strictly decreasing on $[0,d_e]$ in Assumption \eqref{eq:condEllipsesteadystatehelp}. Condition \eqref{eq:condEllipsesteadystateR} can be interpreted as the long-range attraction forces being larger than the short-range repulsion forces. Besides, given $r\in[0,d_e)$ condition \eqref{eq:condEllipsesteadystaterneg} can be interpreted as the attractive forces being too strong for the existence of a stationary ellipse patterns $\delta_{(0,r)}$ and hence for any stationary ellipse pattern $\delta_{(R,r)}$ for $R\geq 0$ because the forces are attractive for  $R$ sufficiently large. Condition \eqref{eq:condEllipsesteadystaterpos} implies that the forces are too repulsive along the vertical axis for a stationary ellipse state $\delta_{(0,r)}$, but as $R$ increases the forces become more attractive which may result in  stationary ellipse state $\delta_{(R,r)}$ for $R>0$. Assumption \ref{ass:condellipsedecrease} relaxes condition \eqref{eq:condEllipsesteadystaterpos}, but  requires additionally that $G(\cdot,r)$ first increases and then decreases to guarantee the uniqueness of a stationary ellipse pattern. In Figure \ref{fig:evalrhsradiuspairs} the function $G$ is evaluated for certain values of $r\in[0,d_e)$ for the forces in the K\"ucken-Champod model \eqref{eq:particlemodel} and one can clearly see that Assumption \ref{ass:condellipsedecrease} is satisfied and there exists a unique zero $R>0$, as stated in Corollary \ref{cor:existenceellipse}. 
\end{rem}
\begin{proof}
Let $r\in(0,d_e)$ be given. Note that the left-hand side of \eqref{eq:condEllipsesteadystatehelp} is equal to $w_1(\phi,R,r)$ for all $\phi\in[0,\pi]$ and $w_1(\phi,R,r)\in[0,\max\{2R,R+r\}]$ for all $\phi\in[0,\pi]$. Since $f_A+f_R$ is strictly decreasing on $[0,d_e]$ by Assumption \ref{ass:propertyforceadditional} we only consider $R\geq 0$ such that $w_1(\phi,R,r)\in [0,d_e]$ for all $\phi\in[0,\pi]$. Clearly, there exists $R_e>0$ such that \eqref{eq:condEllipsesteadystatehelp}  is satisfied.

Since $w_2\bl \phi,R,r\br\sim R+r\cos^2\phi$ we  approximate \eqref{eq:condEllipsesteadystate1} by
\begin{align}\label{eq:condEllipsesteadystate1new}
\int_0^{\pi} (f_A+f_R)\bl w_1(\phi,R,r)\br \bl 1-\cos\phi\br\bl R^2+Rr\cos^2\phi\br \di \phi=0
\end{align}
Note that $(f_A+f_R)\bl w_1(\phi,R,r)\br \bl 1-\cos\phi\br$ for $\phi\in(0,\pi)$ is strictly decreasing as a function of $R$ because $f_A+f_R$ is a strictly decreasing function by Assumption \ref{ass:propertyforceadditional} and $w_1(\phi,R,r)$ is strictly increasing in $R$ for $\phi\in(0,\pi)$ fixed. Hence, $G_1(R,r)$ is strictly decreasing in $R$ and has a unique zero $R_1\in[0,R_e]$, provided $r\geq 0$ satisfies $G_1(0,r)>0$ and \eqref{eq:condEllipsesteadystateR}. Similarly, one can argue that $G_2(R,r)$ is strictly decreasing in $R$ and has a unique zero $R_2\in[0,R_e]$ if $r\geq 0$ such that $G_2(0,r)>0$ and \eqref{eq:condEllipsesteadystateR} are satisfied. The left-hand side of \eqref{eq:condEllipsesteadystate1new} is the rescaled sum of $G_1$ and $G_2$ where $\text{id}^2\cdot G_1(\cdot,r)$ as a function of $R$ is nonnegative on $[0,R_1]$ and negative on $(R_1,R_e]$, while $\text{id}\cdot rG_2(\cdot,r)$ as a function of $R$ is nonnegative on $[0,R_2]$ and negative on $(R_2,R_e]$. In particular, the left-hand side of \eqref{eq:condEllipsesteadystate1new} has a  zero $R\in[\min\{R_1,R_2\},\max\{R_1,R_2\}]$ on $(0,R_e)$ if $r\geq 0$  satisfies \eqref{eq:condEllipsesteadystaterpos} and \eqref{eq:condEllipsesteadystateR}, while there exists no zero on $(0,R_e)$ if $r\geq 0$  satisfies \eqref{eq:condEllipsesteadystaterneg}. If Assumption \ref{ass:condellipsedecrease} is satisfied, then $G(\cdot,r)$ with $r\in[0,d_e)$ given has a zero at $R=0$ and at an $R\in(0,d_e)$ because $G(\cdot,r)>0$ on $(0,R_{int})$, $G(\cdot,r)$ strictly decreasing on $(R_{int},R_e)$ and $G(R_e,r)<0$ by \eqref{eq:condEllipsesteadystateR}. This concludes the proof.
\end{proof}
Since the equilibrium condition \eqref{eq:condEllipsesteadystate1} for trivial ellipse states with $R=0$ is clearly satisfied for all $r\geq 0$ we rewrite $G(R,r)=Rg(R,r)$ for a smooth function $g$ and require $g(0,r)=0$.  Since we are interested in nontrivial states, i.e. $r>0$, we define 
\begin{align*}
\bar{g}(r)=\int_0^{\pi} (f_A+f_R)\bl r|\sin\phi|\br \bl 1-\cos\phi\br  |\cos\phi|\di \phi=0
\end{align*}
and
and it is sufficient to require $\bar{g}(r)=0$ for an $r>0$. Note that $\bar{g}(r)>0$ for all $r\in(0,d_a]$ since $f_A+f_R$ is repulsive on $[0,d_a]$. Assuming that $\bar{g}(d_e)<0$ which is a natural condition for long-range attraction forces being stronger than short-range repulsive forces there exists a unique $\bar{r}\in(0,d_e)$ such that $\bar{g}(\bar{r})=0$ because $\bar{g}$ strictly decreases on $(0,d_e)$. Besides, the necessary condition \eqref{eq:condEllipsesteadystate2} reduces to 
\begin{align*}
\int_{\pi/2}^{3\pi/2} \bl \chi f_A+f_R\br\bl \bar{r}|1-\sin\phi|\br\bl 1-\sin\phi\br |\cos\phi|\di \phi=0.
\end{align*}
Since $f_A\leq 0$ and $f_R\geq 0$ by the definition of the attractive and repulsive force, cf. Assumption~\ref{ass:propertyforce}, there exists a unique $\bar{\chi}\in(0,1)$ such that condition \eqref{eq:condEllipsesteadystate2} is satisfied, given by
\begin{align}\label{eq:chitrivialellipse}
\bar{\chi}=-\frac{\int_{\pi/2}^{3\pi/2} f_R\bl \bar{r}|1-\sin\phi|\br\bl 1-\sin\phi\br |\cos\phi|\di \phi}{\int_{\pi/2}^{3\pi/2}  f_A\bl \bar{r}|1-\sin\phi|\br\bl 1-\sin\phi\br |\cos\phi|\di \phi}>0.
\end{align}
Note that $\bar{\chi}<1$ by the assumption that the long-range attraction forces are stronger than the short-range repulsive forces. In summary, we have the following result.
\begin{lemma}\label{lem:pseudostate}
There exists a unique $\bar{r}\in(0,d_e)$ such that the necessary condition \eqref{eq:condEllipsesteadystate1} for a stationary ellipse state $\delta_{(0,\bar{r})}$   with $\bar{g}(\bar{r})=0$ is satisfied. In this case, the second necessary condition \eqref{eq:condEllipsesteadystate2} is satisfied for a unique $\bar{\chi}\in[0,1]$, defined by \eqref{eq:chitrivialellipse}. 
\end{lemma}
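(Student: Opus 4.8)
The plan is to exploit that both necessary conditions of Corollary~\ref{lem:ellipsehelp} collapse to one-dimensional integral identities when $R=0$, then run an intermediate-value argument in $r$ and an explicit solve in $\chi$. First I would record the reduction of \eqref{eq:condEllipsesteadystate1}: evaluating the weights at $R=0$ gives $w_1(\phi,0,r)=r|\sin\phi|$ and $w_2(\phi,0,r)=r|\cos\phi|$, so its left-hand side $G(R,r)$ carries a factor $R$, and writing $G(R,r)=Rg(R,r)$ one gets $g(0,r)=r\,\bar g(r)$ with $\bar g$ as defined just before the statement. Hence $\delta_{(0,r)}$ with $r>0$ satisfies \eqref{eq:condEllipsesteadystate1} if and only if $\bar g(r)=0$, so the first assertion reduces to showing that $\bar g$ has a unique zero in $(0,d_e)$.

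Next I would prove that existence and uniqueness. For small radii, if $r\in(0,d_a]$ then $r|\sin\phi|\le d_a$ with strict inequality for a.e.\ $\phi$, so by Assumption~\ref{ass:propertyforce} the integrand $(f_A+f_R)\bl r|\sin\phi|\br(1-\cos\phi)|\cos\phi|$ is nonnegative and strictly positive on a set of positive measure; hence $\bar g(r)>0$ on $(0,d_a]$. For monotonicity I would argue by direct comparison rather than by differentiating under the integral sign: if $0<r_1<r_2\le d_e$, then $r_1|\sin\phi|<r_2|\sin\phi|\le d_e$ for a.e.\ $\phi$, and since $f_A+f_R$ is strictly decreasing on $[0,d_e]$ by Assumption~\ref{ass:propertyforceadditional} while the weight $(1-\cos\phi)|\cos\phi|$ is nonnegative and positive on a set of positive measure, this forces $\bar g(r_1)>\bar g(r_2)$; so $\bar g$ is strictly decreasing on $(0,d_e)$. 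Since $f_A,f_R$ are continuous, $\bar g$ is continuous, and invoking the natural attraction-dominance inequality $\bar g(d_e)<0$, the intermediate value theorem produces a unique $\bar r\in(d_a,d_e)\subset(0,d_e)$ with $\bar g(\bar r)=0$.

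Finally I would evaluate the second necessary condition \eqref{eq:condEllipsesteadystate2} at $R=0$ and $r=\bar r$: there $w_3(\phi,0,\bar r)=\bar r\,|1-\sin\phi|$, $w_2(\phi,0,\bar r)=\bar r|\cos\phi|$, and $1-\sin\phi\ge 0$ on $[\pi/2,3\pi/2]$, so after dividing out the positive factor $\bar r^2$, condition \eqref{eq:condEllipsesteadystate2} becomes $\int_{\pi/2}^{3\pi/2}(\chi f_A+f_R)\bl\bar r|1-\sin\phi|\br(1-\sin\phi)|\cos\phi|\di\phi=0$. This is affine in $\chi$, with $\chi$-coefficient $\int_{\pi/2}^{3\pi/2} f_A\bl\bar r|1-\sin\phi|\br(1-\sin\phi)|\cos\phi|\di\phi$, which is strictly negative since $f_A\le 0$ does not vanish identically and the weight $(1-\sin\phi)|\cos\phi|$ is positive on $(\pi/2,3\pi/2)$; hence the solution is unique and equals the quotient $\bar\chi$ in \eqref{eq:chitrivialellipse}, and $\bar\chi\ge 0$ is immediate from $f_R\ge 0$, $f_A\le 0$. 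The one point that does not follow from Assumptions~\ref{ass:propertyforce}--\ref{ass:propertyforceadditional}, and which I expect to be the main obstacle, is the bound $\bar\chi\le 1$: it is equivalent to $\int_{\pi/2}^{3\pi/2}(f_A+f_R)\bl\bar r|1-\sin\phi|\br(1-\sin\phi)|\cos\phi|\di\phi\le 0$, and since $\bar r(1-\sin\phi)$ takes values both below and above $d_a$ this is not automatic; it has to be imposed as the physically natural requirement that long-range attraction outweighs short-range repulsion along $s$, and for the K\"ucken--Champod coefficients \eqref{eq:attractionforcemodel}--\eqref{eq:repulsionforcemodel} it is verified numerically.
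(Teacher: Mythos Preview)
Your proposal is correct and follows essentially the same route as the paper: reduce \eqref{eq:condEllipsesteadystate1} at $R=0$ to $\bar g(r)=0$, use positivity on $(0,d_a]$, strict monotonicity on $(0,d_e)$ from Assumption~\ref{ass:propertyforceadditional}, and the imposed sign $\bar g(d_e)<0$ to obtain a unique $\bar r$ by the intermediate value theorem; then reduce \eqref{eq:condEllipsesteadystate2} at $(0,\bar r)$ to an affine equation in $\chi$ and solve explicitly. Your treatment of the bound $\bar\chi\le 1$ also matches the paper's: it is not derived from Assumptions~\ref{ass:propertyforce}--\ref{ass:propertyforceadditional} but is invoked as the same ``long-range attraction dominates short-range repulsion'' hypothesis already used for $\bar g(d_e)<0$; the paper states this in one line, and your explicit integral reformulation is simply a sharper restatement of that hypothesis.
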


\begin{ass}\label{ass:ellipsetuples}
Assume that 
\begin{enumerate}
\item If $G(\tilde{R},\tilde{r})=0$ for $\tilde{R}>0,~\tilde{r}\geq 0$, then  $G(\tilde{R},r)<0$.\label{ass: rdependenceass1}
\item There exists $R>0$ such that $G(R,0)<0$.\label{ass: rdependenceass2}
\item For all $R>0$ there exists $r\geq 0$ such that $G(R,r)<0$.\label{ass: rdependenceass3}
\end{enumerate}
\end{ass}

\begin{rem}
Note that \eqref{ass: rdependenceass1} in Assumption~\ref{ass:ellipsetuples} implies that if the equilibrium condition for an ellipse state is satisfied for a specific tuple $(\tilde{R},\tilde{r})$, then the forces are too attractive for any ellipse state $(\tilde{R},r)$ with longer major axis $\tilde{R}+r\geq \tilde{R}+\tilde{r}$ for $r\geq \tilde{r}$. Condition \eqref{ass: rdependenceass2} in Assumption~\ref{ass:ellipsetuples} together with Assumption~\ref{ass:condellipsedecrease} implies the existence of a ring equilibrium. Besides, \eqref{ass: rdependenceass3} in Assumption~\ref{ass:ellipsetuples} states that for an ellipse state with a minor axis of length $R>0$ one can choose the major axis $R+r$ sufficiently long so that the given forces are too attractive for the ellipse state $\delta_{(R,r)}$ to be stationary. Note that one can easily check that these assumptions are satisfied for the forces in the K\"ucken-Champod model with parameters in \eqref{eq:parametervaluesRepulsionAttraction}.
\end{rem}

\begin{proposition}\label{prop:ellipsetuples}
Let $0\leq r_1<r_2<d_e$ and let $R_1,R_2\geq 0$ such that $$w_1(\phi,R,r)\leq d_e\quad\text{for all}\quad \phi\in [0,\pi],\quad R\in [0,\max\{R_1,R_2\}]$$
and the necessary condition \eqref{eq:condEllipsesteadystate1} for  $\delta_{(R_1,r_1)}$ and $\delta_{(R_2,r_2)}$ being stationary ellipse states are satisfied. Suppose that Assumption \ref{ass:ellipsetuples} and Assumption \ref{ass:condellipsedecrease} hold. Then, $R_1<R_2$, i.e. the longer the major axis of the stationary ellipse state, the shorter the minor axis. Besides, there exists a continuous function $q(t)=(R(t),r(t))$ for $t\in[0,1]$ where $R(t)$ is strictly decreasing, $r(t)$ is strictly increasing, $q(0)=(0,\bar{r})$ for the pseudo-ellipse state $\delta_{(0,\bar{r})}$ with $\bar{r}>0$ in Lemma \ref{lem:pseudostate} and $q(1)=(\bar{R},0)$ for the unique ring state of radius $\bar{R}$ in Proposition \ref{prop:ringsteadystate}. 
\end{proposition}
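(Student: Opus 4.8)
The plan is to read the necessary condition \eqref{eq:condEllipsesteadystate1} as the zero level set of $G$ in the $(R,r)$-plane and to show that, away from the trivial factor $R=0$, this set is a single strictly monotone curve joining the pseudo-ellipse $\delta_{(0,\bar r)}$ of Lemma~\ref{lem:pseudostate} to the ring $\delta_{(\bar R,0)}$ of Proposition~\ref{prop:ringsteadystate}. Throughout I would use the factorization $G(R,r)=Rg(R,r)$, so that nontrivial equilibria correspond to zeros of $g$, together with the admissibility hypothesis $w_1(\phi,R,r)\le d_e$, which keeps every relevant distance in the range $[0,d_e]$ on which $f_A+f_R$ is strictly decreasing by Assumption~\ref{ass:propertyforceadditional}, and the shape information on $R\mapsto G(R,r)$ supplied by Assumption~\ref{ass:condellipsedecrease}.

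For the ordering of the minor axes, suppose $\delta_{(R_1,r_1)}$ and $\delta_{(R_2,r_2)}$ satisfy \eqref{eq:condEllipsesteadystate1} with $0\le r_1<r_2<d_e$ and $R_1,R_2>0$, so $G(R_1,r_1)=G(R_2,r_2)=0$. First I would apply Assumption~\ref{ass:ellipsetuples}, condition~\eqref{ass: rdependenceass1}, to the tuple $(R_1,r_1)$: since $r_2>r_1$, this yields $G(R_1,r_2)<0$. Then I would invoke Assumption~\ref{ass:condellipsedecrease} at the fixed level $r=r_2$, under which $R\mapsto G(R,r_2)$ vanishes at $R=0$, strictly increases on $(0,R_{int})$ and strictly decreases on $(R_{int},R_e)$, so that it has exactly one nontrivial zero $R_2\in(R_{int},R_e)$ and changes sign only there. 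The sign of $G(R_1,r_2)$ relative to this unique sign change locates $R_1$ on one side of $R_2$ and hence yields the asserted inequality $R_1<R_2$, i.e.\ the stationary state with the longer major axis $R+r$ has the shorter minor axis $R$.

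For the connecting curve I would, for each admissible $r$, use Corollary~\ref{cor:existenceellipse} together with Assumption~\ref{ass:condellipsedecrease} to obtain a unique nontrivial radius $\mathcal R(r)$ with $G(\mathcal R(r),r)=0$, lying in the strictly decreasing branch $(R_{int},R_e)$ where $\partial_R G<0$. The implicit function theorem then gives that $r\mapsto\mathcal R(r)$ is $C^1$ with $\mathcal R'(r)=-\partial_r G/\partial_R G$, and since increasing $r$ enlarges both $w_1$ and $w_2$ while $f_A+f_R$ is strictly decreasing, the definite sign of $\partial_r G$ makes $\mathcal R$ strictly monotone. The two endpoints are fixed by the earlier results: at $r=0$ condition \eqref{eq:condEllipsesteadystate1} reduces to the ring condition \eqref{eq:condRingsteadystate}, whose unique solution is $\bar R$ by Proposition~\ref{prop:ringsteadystate}, giving $(\bar R,0)$; and as $r\uparrow\bar r$ the nontrivial branch meets the axis $R=0$, since $g(0,\bar r)=\bar r\,\bar g(\bar r)=0$ by Lemma~\ref{lem:pseudostate}, giving $(0,\bar r)$. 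A monotone reparametrization by $t\in[0,1]$ with $q(0)=(0,\bar r)$ and $q(1)=(\bar R,0)$ then produces $q(t)=(R(t),r(t))$ with $R(t)$ and $r(t)$ strictly monotone, as asserted in the proposition.

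The hard part will be twofold. First, pinning down the definite sign of $\partial_r G$ along $\{g=0\}$: unlike $\partial_R G$, which is controlled directly by Assumption~\ref{ass:condellipsedecrease}, the $r$-derivative combines a term carrying $(f_A+f_R)'<0$ with a term carrying the sign-changing factor $f_A+f_R$, so one must lean on Assumption~\ref{ass:ellipsetuples}, condition~\eqref{ass: rdependenceass1}, rather than on a naive differentiation, to conclude monotonicity of $\mathcal R$. Second, the degenerate endpoint $r=\bar r$, where the branch reaches $R=0$ and the implicit function theorem breaks down; here the factorization $G=Rg$ and a one-sided limiting argument are needed to show the branch limits continuously onto $(0,\bar r)$. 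Finally, Assumption~\ref{ass:ellipsetuples}, condition~\eqref{ass: rdependenceass3}, guarantees that the level set stays bounded and does not escape through $R_e$, so that the curve is globally defined and both endpoints are genuinely attained.
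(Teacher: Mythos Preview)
Your approach is essentially the same as the paper's for the ordering part: both use Assumption~\ref{ass:ellipsetuples}\eqref{ass: rdependenceass1} to pass from $G(R_1,r_1)=0$ to $G(R_1,r_2)<0$, and then the up--down shape of $R\mapsto G(R,r_2)$ from Assumption~\ref{ass:condellipsedecrease} to place $R_1$ relative to the unique nontrivial zero $R_2$. The paper threads the argument through the ring radius $\tilde R$ first (from Assumption~\ref{ass:ellipsetuples}\eqref{ass: rdependenceass2}), obtaining the nested chain $\tilde R_2<\tilde R_1<\tilde R$, but this extra anchoring step is not essential to the logic and your more direct route is fine.

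One slip to fix: your own argument yields $R_1>R_2$, not $R_1<R_2$. Since $G(\cdot,r_2)$ is positive on $(0,R_2)$ and negative on $(R_2,R_e)$, the conclusion $G(R_1,r_2)<0$ forces $R_1>R_2$. This is exactly what the paper's proof obtains (it derives $\tilde R_2\in[0,\tilde R_1)$), and it is what the verbal statement ``longer major axis, shorter minor axis'' and the endpoint data $q(0)=(0,\bar r)$, $q(1)=(\bar R,0)$ actually require; the inequality $R_1<R_2$ printed in the proposition, and the stated monotonicities of $R(t)$ and $r(t)$, appear to be a sign/direction typo that you have carried over. Your interpretation is right; the inequality should be reversed.

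For the curve $q$, you give considerably more than the paper does: the paper's proof stops after establishing the strict ordering and leaves the existence of the continuous monotone branch implicit, whereas you invoke the implicit function theorem on the branch where $\partial_R G<0$, handle the endpoints via Proposition~\ref{prop:ringsteadystate} and Lemma~\ref{lem:pseudostate}, and correctly flag that the sign of $\partial_r G$ along $\{g=0\}$ is not obtainable by naive differentiation but must come from Assumption~\ref{ass:ellipsetuples}\eqref{ass: rdependenceass1}, and that the endpoint $R=0$ is degenerate and needs the factorization $G=Rg$. These are genuine refinements over the paper's sketch.
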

\begin{proof}
Note that $G(0,r)=0$ for all $r\geq 0$. Further note that $(f_A+f_R)(0)>0$ since $F$ is a short-range repulsive, long-range attractive force by Assumption \ref{ass:propertyforce}, implying that for all $R\in(0,d_a/4]$ and all $r\in[0,d_a/4]$ we have $G(R,r)>0$. By continuity and since $G(R,0)<0$ for some $R>0$ there exists $\tilde{R}>0$ such that $G(\tilde{R},0)=0$. Besides, Assumption \ref{ass:ellipsetuples} implies that $G(\tilde{R},r)<0$ for all $r>0$. In particular, $G(\tilde{R},r_1)<0$ and $G(\tilde{R},r_2)<0$ for $r_2>r_1>0$  implies together  with Assumption \ref{ass:condellipsedecrease} that there exists a unique $\tilde{R}_1\in[0,\tilde{R})$ such that $G(\tilde{R}_1,r_1)=0$ which implies that $G(\tilde{R}_1,r_2)<0$ and that there exists $\tilde{R}_2\in[0,\tilde{R}_1)$ such that $G(\tilde{R}_2,r_2)=0$.
\end{proof}
In Figure \ref{fig:rvsRcontinuous} the tuples $(R,r)$ are plotted such that the necessary condition \eqref{eq:condEllipsesteadystate1} for ellipse equilibria is satisfied. In particular, these tuples $(R,r)$ can be determined independently from $\chi$ from \eqref{eq:condEllipsesteadystate1}.

\begin{corollary}\label{cor:chidependenceellipse}
Let $H(R,r,\chi)$ denote the left-hand side of \eqref{eq:condEllipsesteadystate2} and assume that $H(q_1,q_2,1)$ is strictly increasing where the function $q(t)=(q_1(t),q_2(t))$, $t\in[0,1]$, is defined in Proposition \ref{prop:ellipsetuples}.
For every tuple $(R,r)$ with $R,r\geq 0$ such that the condition \eqref{eq:condEllipsesteadystate1} is satisfied there exists a unique $\chi\in[0,1]$ so that \eqref{eq:condEllipsesteadystate2} is also satisfied. If additionally $H(q_1,q_2,\chi)$ for all $\chi\in [\bar{\chi},1]$ then there exists a unique tuple $(R,r)$ such that the corresponding ellipse pattern $\delta_{(R,r)}$ is an equilibrium for any given $\chi\in[\bar{\chi},1]$. In particular, there exists a continuous, strictly increasing function $p=p(t)$ for $t\in[0,1]$ with $p(0)=\bar{\chi}$ and $p(1)=1$ such that for $t\in[0,1]$ given the ellipse state $\delta_{(q_1(t),q_2(t))}$ is stationary for a unique value of the parameter $\chi=p(t)$. In other words, the smaller the value of $\chi\in[\bar{\chi},1]$ the longer the major and the shorter the minor axis for ellipse equilibria, i.e. the smaller the value of $\chi$ the more the  ellipse is stretched along the vertical axis.
\end{corollary}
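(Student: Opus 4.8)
The plan is to disentangle the two necessary conditions: \eqref{eq:condEllipsesteadystate1}, which by Proposition~\ref{prop:ellipsetuples} determines a one-parameter family of admissible tuples $q(t)=(q_1(t),q_2(t))$ independent of $\chi$, and \eqref{eq:condEllipsesteadystate2}, which couples $(R,r)$ to $\chi$ through the function $H(R,r,\chi)$. First I would observe that $H$ is affine in $\chi$: writing $H(R,r,\chi)=\chi\,H_A(R,r)+H_R(R,r)$ with
\begin{align*}
H_A(R,r)&=\int_{\pi/2}^{3\pi/2} f_A\bl w_3(\phi,R,r)\br (R+r)\bl 1-\sin\phi\br w_2(\phi,R,r)\di\phi\le 0,\\
H_R(R,r)&=\int_{\pi/2}^{3\pi/2} f_R\bl w_3(\phi,R,r)\br (R+r)\bl 1-\sin\phi\br w_2(\phi,R,r)\di\phi\ge 0,
\end{align*}
so that for any fixed $(R,r)$ with $R+r>0$ and $H_A(R,r)<0$ the equation $H(R,r,\chi)=0$ has the unique solution $\chi=-H_R/H_A$. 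One then checks this lies in $[0,1]$: nonnegativity is immediate, and $\chi\le 1$ follows from the "long-range attraction stronger than short-range repulsion" hypothesis exactly as in the derivation of \eqref{eq:chitrivialellipse} (indeed for the endpoint $q(0)=(0,\bar r)$ this $\chi$ is precisely $\bar\chi$, and for $q(1)=(\bar R,0)$ it is $1$ by the symmetry argument that made \eqref{eq:condRingsteadystate} and \eqref{eq:condRingsteadystateChigeneral} coincide at $\chi=1$). This proves the first assertion for all admissible tuples and, composing with $q$, defines $p(t):=-H_R(q(t))/H_A(q(t))$ with $p(0)=\bar\chi$, $p(1)=1$.

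Next I would establish that $p$ is continuous and strictly increasing. Continuity is clear since $q$ is continuous (Proposition~\ref{prop:ellipsetuples}) and $H_A,H_R$ are continuous with $H_A<0$ along the curve. For monotonicity, note $p(t)$ is the unique root in $\chi$ of $t\mapsto H(q_1(t),q_2(t),\chi)$; the hypothesis that $H(q_1,q_2,1)$ is strictly increasing in $t$ means the "$\chi=1$ slice" of this function decreases its (positive) value along the curve, and the additional monotonicity-in-$t$ hypothesis on $H(q_1,q_2,\chi)$ for $\chi\in[\bar\chi,1]$ propagates this to all relevant $\chi$; a direct implicit-function / sign-chasing argument then forces the root $p(t)$ to move monotonically. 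Concretely: if $t_1<t_2$ then $H(q(t_1),\chi)<H(q(t_2),\chi)$ for the relevant $\chi$, and since $\chi\mapsto H(q(t_i),\chi)$ is increasing (as $H_A\le 0$ forces... more carefully, one uses that the root is simple and tracks the sign of $H$ at a common test value of $\chi$), one deduces $p(t_1)<p(t_2)$. I would spell this out as a short lemma: a continuous family of strictly monotone functions with simple roots, itself monotone in the parameter in the stated sense, has a strictly monotone root.

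For the uniqueness-for-fixed-$\chi$ claim, I would invert the picture: given $\chi\in[\bar\chi,1]$, since $p$ is a continuous strictly increasing bijection $[0,1]\to[\bar\chi,1]$, there is a unique $t=p^{-1}(\chi)$, hence a unique tuple $(R,r)=(q_1(t),q_2(t))$ along the curve satisfying both \eqref{eq:condEllipsesteadystate1} and \eqref{eq:condEllipsesteadystate2}; uniqueness among \emph{all} tuples (not just those on the curve) satisfying \eqref{eq:condEllipsesteadystate1} is exactly the content of Proposition~\ref{prop:ellipsetuples} combined with Assumption~\ref{ass:condellipsedecrease}, which pins down the curve as the full solution set of \eqref{eq:condEllipsesteadystate1}. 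The final sentence ("smaller $\chi$ $\Leftrightarrow$ longer major, shorter minor axis") is then a restatement: $R(t)=q_1(t)$ is strictly decreasing and $r(t)=q_2(t)$ strictly increasing while $\chi=p(t)$ is strictly increasing, so small $\chi$ corresponds to small $t$, hence large $q_1+q_2$ and small $q_1$. The main obstacle I anticipate is the monotonicity of $p$: the statement as given is slightly telegraphic about which monotonicity hypothesis on $H$ is being used (the clause "If additionally $H(q_1,q_2,\chi)$ for all $\chi\in[\bar\chi,1]$" appears truncated), so the real work is to identify the minimal clean hypothesis — strict monotonicity in $t$ of $H(q(t),\chi)$ for each fixed $\chi\in[\bar\chi,1]$, together with $H_A(q(t))<0$ — under which the implicit-function argument for strict monotonicity of the root $p$ goes through, and to verify it is what the displayed assumptions intend.
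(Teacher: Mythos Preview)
Your approach is essentially the paper's, repackaged through the affine decomposition $H(R,r,\chi)=\chi H_A(R,r)+H_R(R,r)$; the paper instead argues directly that $\chi\mapsto H(q(t),\chi)$ is strictly decreasing (since $f_A\le 0$), that $H(q(t),0)>0$ (since $f_R\ge 0$), and that $H(q(t),1)\le 0$, then applies the intermediate value theorem. The monotonicity of $p$ and the inversion to get uniqueness for fixed $\chi$ proceed exactly as you outline.

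Two points where you should tighten things. First, your justification that the unique root satisfies $\chi\le 1$ is not quite the right one: the ``long-range attraction stronger than short-range repulsion'' heuristic only handles the endpoint $(0,\bar r)$. The paper's argument is cleaner and is precisely why the hypothesis ``$H(q_1,q_2,1)$ strictly increasing in $t$'' is there: after the change of variables $\phi\mapsto\phi+\pi/2$ one sees that $H(q(1),1)=H(\bar R,0,1)=0$ coincides with \eqref{eq:condEllipsesteadystate1} at $r=0$, and strict monotonicity in $t$ then forces $H(q(t),1)<0$ for all $t<1$, hence the root lies in $[0,1]$. Second, your parenthetical ``$\chi\mapsto H$ is increasing (as $H_A\le 0$ forces\ldots)'' has the sign backwards; $H_A\le 0$ makes $H$ \emph{decreasing} in $\chi$, which is what the argument needs and what you in fact use when you self-correct. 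With those two fixes your proof matches the paper's.
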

\begin{proof}
Note that \eqref{eq:condEllipsesteadystate2} can be rewritten as 
\begin{align}\label{eq:condEllipsesteadystate2rewrite}
\int_{0}^{\pi} \bl \chi f_A+f_R\br\bl w_3(\phi+\pi/2,R,r)\br(R+r)\bl 1-\cos\phi\br w_2(\phi+\pi/2,R,r)\di \phi=0
\end{align}
where $$w_3(\phi+\pi/2,R,r)=\sqrt{R^2\sin^2\phi+(R+r)^2(1-\cos\phi)^2}.$$ In particular, \eqref{eq:condEllipsesteadystate2rewrite} is equal to \eqref{eq:condEllipsesteadystate1} for $\chi=1$ and $r=0$, i.e. $H(q_1(1),q_2(1),1)=0$. However, for any tuple $(R,r)$ with $r>0$ satisfying \eqref{eq:condEllipsesteadystate1} we have $H(R,r,1)<0$ since $H(q_1,q_2,1)$ is strictly increasing on $[0,1]$ and $H(q_1(1),q_2(1),1)=0$. Besides, $H(q_1,q_2,0)>0$ on $[0,1]$ since by the definition of the repulsive force coefficient in Assumption \ref{ass:propertyforce} we have $1-\cos\phi\geq 0$ on $[0,\pi]$, $f_R\geq 0$ and $w_2\geq 0$.  Since  $H(q_1(t),q_2(t),\cdot)$ is strictly decreasing as a function of $\chi$ for any $t\in[0,1]$ fixed by the properties of the attractive force coefficient in Assumption \ref{ass:propertyforce} for each $t\in[0,1]$ there exists a unique $\chi\in[0,1]$ by continuity of $H$ such that the tuple  $q(t)=(q_1(t),q_2(t))$ satisfies condition \eqref{eq:condEllipsesteadystate2}.

To show that for any  $\chi\in[\bar{\chi},1]$ there exists a unique tuple $(R,r)$ such that $\delta_{(R,r)}$ is a stationary ellipse state note that $H(\bar{R},0,1)=0$ by the definition of $\bar{R}$ in Proposition \ref{prop:ringsteadystate} and $H(\bar{R},0,\chi)>0$ for $\chi\in(0,1]$ since $H(\bar{R},0,\cdot)$ strictly decreasing. Similarly, $H(0,\bar{r},\bar{\chi})=0$ and $H(0,\bar{r},\chi)<0$ for all $\chi\in(\bar{\chi},1]$. Since  $H(q_1,q_2,\chi)$ is strictly increasing for any $\chi\in[\bar{\chi},1]$ by assumption the function  $H(q_1,q_2,\chi)$ for $\chi\in[\bar{\chi},1]$ fixed has a unique zero, i.e. there exists a unique tuple $(R,r)$ such that $\delta_{(R,r)}$ is a stationary ellipse state. Besides, if $\delta_{(R_1,r_1)}$ and $\delta_{(R_2,r_2)}$ are stationary ellipse states with $R_1<R_2$ and $r_1>r_2$ for $\chi_1,\chi_2\in[\bar{\chi},1]$, respectively, then $\chi_1<\chi_2$ since there exist $t_1,t_2\in[0,1]$ with $t_1<t_2$ such that $q(t_1)=(R_1,r_1)$ and $q(t_2)=(R_2,r_2)$ and $H(q_1,q_2,\chi)$ strictly increasing for any $\chi\in[\bar{\chi},1]$.
\end{proof}

In Figure \ref{fig:evalrhsradiuspairschicontinuous} the functional $H(q_1,q_2,\chi)$ is evaluated for different values of $\chi$ and one can see that for every $\chi$ there exists a unique tuple $(R,r)$ such that the equilibrium condition \eqref{eq:condEllipsesteadystate2} is satisfied. The eccentricity $e=\sqrt{1-(R/(R+r))^2}$ of the ellipse is illustrated as a function of $\chi$ in Figure \ref{fig:eccentricitycontinuous} and one can see how the eccentricity increases as $\chi$ decreases which corresponds to the evolution of the ring pattern into a stationary ellipse pattern whose minor axis becomes shorter and whose major axis becomes longer as $\chi$ decreases, proven in Corollary \ref{cor:chidependenceellipse}.

\begin{figure}[htbp]
    \centering
    \subfloat[Evaluation of LHS of \eqref{eq:condEllipsesteadystate1}]{\includegraphics[width=0.45\textwidth]{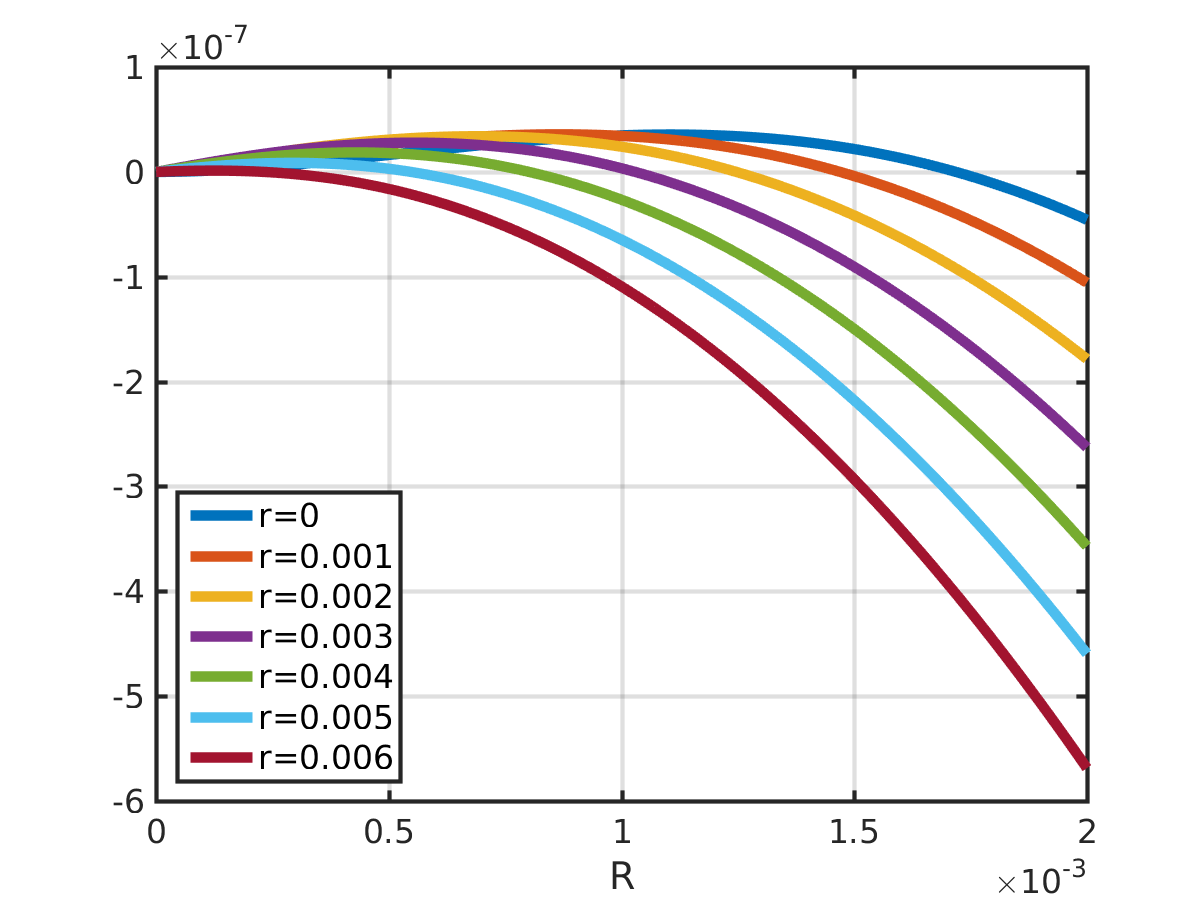}\label{fig:evalrhsradiuspairs}}
\subfloat[Tuples $(R,r)$] {\includegraphics[width=0.45\textwidth]{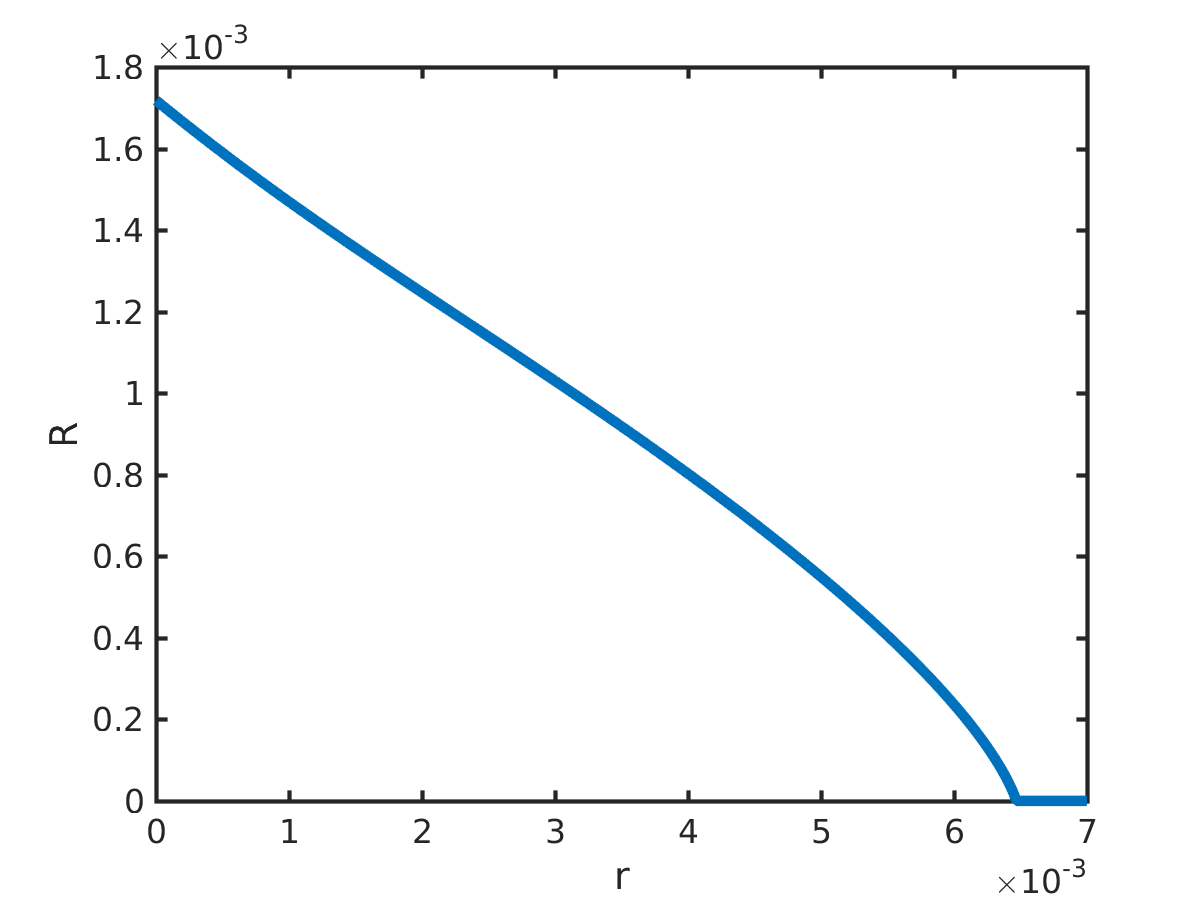}\label{fig:rvsRcontinuous}}

\subfloat[Evaluation of LHS of \eqref{eq:condEllipsesteadystate2}] {\includegraphics[width=0.45\textwidth]{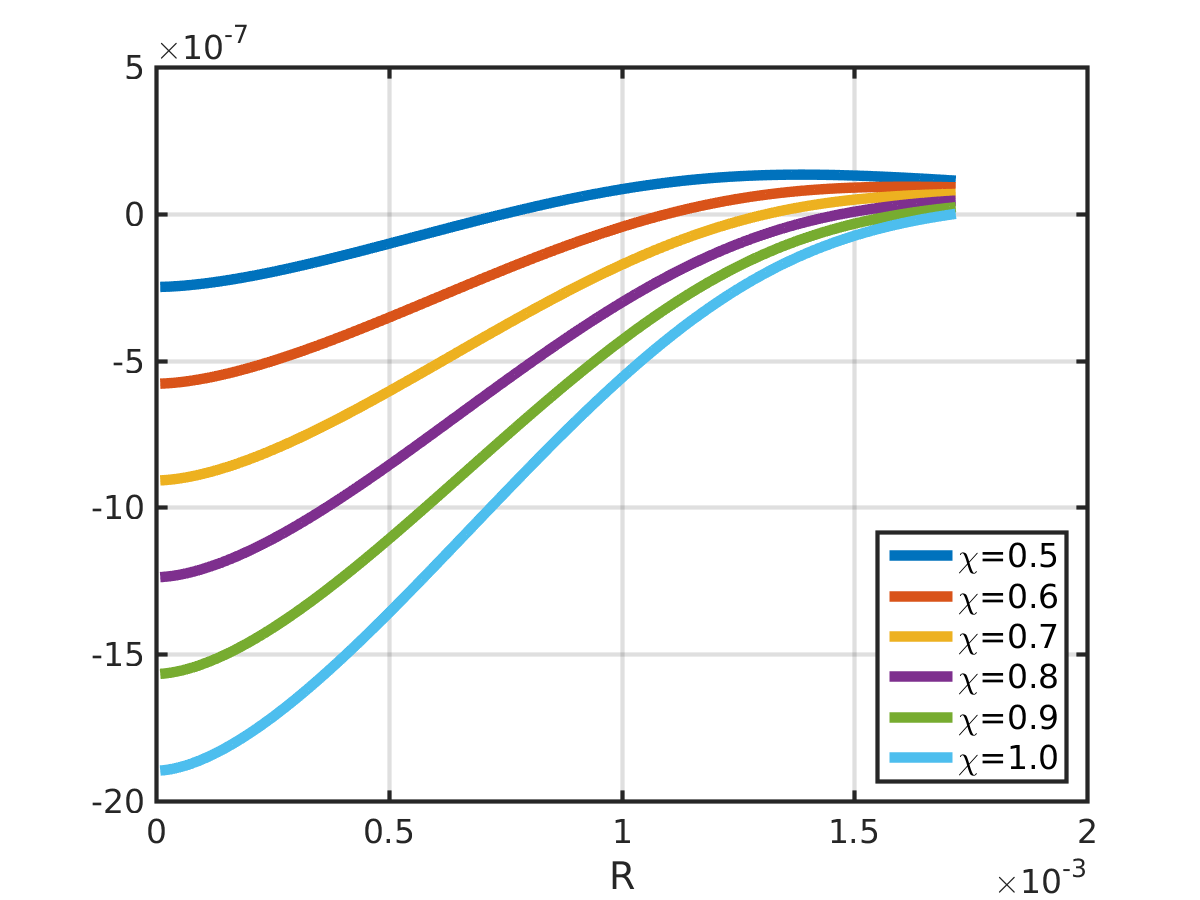}\label{fig:evalrhsradiuspairschicontinuous}}
\subfloat[$e=e(\chi)$] {\includegraphics[width=0.45\textwidth]{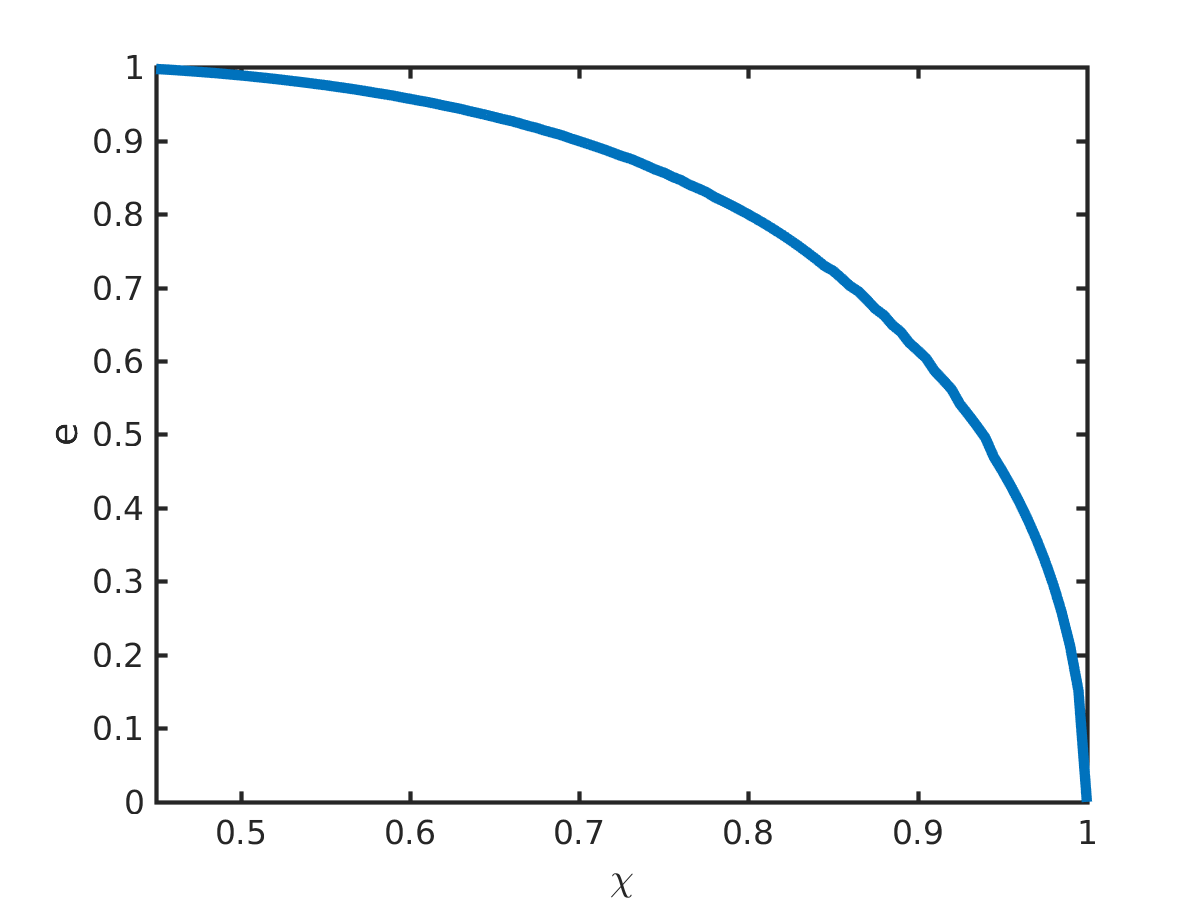}\label{fig:eccentricitycontinuous}}
    \caption{Tuples $(R,r)$ for stationary ellipse patterns to the mean-field equation \eqref{eq:macroscopiceq} satisfying equilibrium conditions \eqref{eq:condEllipsesteadystate1} and \eqref{eq:condEllipsesteadystate2} for different values of $\chi$ and eccentricity $e$ as a function of $\chi$ for the forces in the K\"ucken-Champod model for parameter values in \eqref{eq:parametervaluesRepulsionAttraction}}
\end{figure}

\subsubsection{Stripe pattern}
Based on the discussion in Section \ref{sec:interpretationforce} for the tensor field $T=\chi s \otimes s+l\otimes l$ with $l=(1,0)$ and $s=(0,1)$, we consider different shapes of vertical stripe patterns in $\R^2$ and discuss whether they are equilibria.

\begin{definition}
Let  the center of mass be denoted by $x_c=(x_{c,1},x_{c,2})\in \R^2$. Then we
define the measure $\delta_{\bl x_{c,1},\cdot\br}$ by
\begin{align*}
\delta_{\bl x_{c,1},\cdot\br}(A)= \lambda\bl A\cap\bl\{x_{c,1}\}\times\R\br\br
\end{align*}
for all measurable sets $A\subset\R^2$ where $\lambda$ denotes the one-dimensional Lebesgue measure.
\end{definition}

The  measure  $\delta_{\bl x_{c,1},\cdot\br}$ is a locally finite measure, but not a probability measures and satisfies condition \eqref{eq:steadystate} for equilibria of the mean-field PDE \eqref{eq:macroscopiceq} for any force satisfying Assumption \ref{ass:propertyforce} and any $\chi\in [0,1]$ since  $F(x-x',T)=-F(-(x-x'),T)$ for all $x,x'\in\R^2$. Note that fully repulsive forces along the vertical axis are necessary for the occurrence of stable stripe patterns $\delta_{\bl x_{c,1},\cdot\br}$. Further note that as $\chi$ decreases the attraction forces disappear along the vertical direction and the mass leaks to infinity driven by purely repulsive forces along the vertical axis so that $\delta_{\bl x_{c,1},\cdot\br}$ cannot be the limit of an ellipse pattern. Hence,   vertical lines are not stable equilibria with Definition \ref{def:steadystate} for the K\"ucken-Champod model \eqref{eq:particlemodel} posed in the plane. 

To obtain measures  concentrating on vertical lines as solutions to the  K\"ucken-Champod model \eqref{eq:particlemodel} and to guarantee the conservation of mass under the variation of parameter $\chi$, 
we consider the associated probability measure on the two-dimensional unit torus $\mathbb{T}^2$ instead of the full space $\R^2$. Another possibility to obtain measures concentrating on vertical lines as solutions is to consider confinement forces, see \cite{Mora}.

Solutions to the mean-field PDE \eqref{eq:macroscopiceq} satisfying condition \eqref{eq:steadystate} include  measures which are uniformly distributed on certain intervals along the vertical axis, i.e. on $\{x=(x_1,x_2)\in\R^2\colon x_1=x_{c,1},~x_2\in [a,b]\}$ for some constants $a<b$, as well as  measures which are uniformly distributed on  unions of distinct intervals. The former occur if the total force is repulsive-attractive so that the attraction force restricts the stretching of the solution to certain subsets of the vertical axis. The latter which look like  dashed lines parallel to the vertical axis can be realized by repulsive-attractive-repulsive forces, i.e. repulsive-attractive forces may lead to accumulations on subsets of the vertical axis while the additional repulsion force acting on long distances is responsible for the separation of the different subsets.  

After considering these one-dimensional patterns, the question arises whether the corresponding two-dimensional vertical stripe pattern of width $\Delta$ satisfies the equilibrium condition \eqref{eq:steadystate} for any $\Delta>0$.
Let $\Delta>0$ and consider the two-dimensional vertical stripe pattern of width $\Delta$, given by
\begin{align*}
g_{\Delta}(x)=g_{\Delta}(x_1,x_2)=\begin{cases}\frac{1}{\Delta},& x_1\in \left[x_{c,1}-\frac{\Delta}{2},x_{c,1}+\frac{\Delta}{2}\right],\\0, & \text{otherwise}.
\end{cases}
\end{align*}
We assume that $g_{\Delta}$ satisfies the equilibrium condition \eqref{eq:steadystate} for the mean-field PDE \eqref{eq:macroscopiceq}, i.e. 
$g_{\Delta} \bl F\ast g_{\Delta}\br=0,$
implying
$$\int_{\left[x_{c,1}-\frac{\Delta}{2},x_{c,1}+\frac{\Delta}{2}\right]\times \R}F(x-x',T)\di x'=0 \quad\text{for all}\quad x\in\left[x_{c,1}-\frac{\Delta}{2},x_{c,1}+\frac{\Delta}{2}\right]\times \R.$$
By linear transformations this reduces to
$$\int_{\left[-\frac{\Delta}{2},\frac{\Delta}{2}\right]\times \R}F((x_1,0)-x',T)\di x'=0 \quad\text{for all}\quad x_1\in\left[-\frac{\Delta}{2},\frac{\Delta}{2}\right].$$
Since $F(x-x',T(x))=-F(-(x-x'),T(x))$ for all $x,x'\in\R^2$ we have
$$e_1\cdot \int_{\left[-\frac{\Delta}{2},\frac{\Delta}{2}\right]\times \R}F((x_1,0)-x',T)\di x'=0 \quad\text{for all}\quad x_1\in\left[-\frac{\Delta}{2},\frac{\Delta}{2}\right] $$
and symmetry implies 
\begin{align}\label{eq:condStripePattern}
e_1\cdot\int_{\left[x_1,\Delta-x_1\right]\times \R}F(x',T)\di x'=0 \quad\text{for all}\quad x_1\in\left[0,\frac{\Delta}{2}\right).
\end{align}
Hence the equilibrium state  can only occur for special choices of the interaction force $F$. In general, \eqref{eq:condStripePattern} is not satisfied and thus $g_{\Delta}$ is not an equilibrium state  of the mean-field PDE.

\section{Numerical methods and results}\label{sec:numerics}
In this section, we investigate the long-time behavior of solutions to the K\"{u}cken-Champod model \eqref{eq:particlemodel} and the pattern formation process numerically and we discuss the numerical results by comparing them  to the analytical results of the model in Section \ref{sec:analysis}. These numerical simulations are necessary for getting a better understanding of the long-time behavior of solutions to the K\"{u}cken-Champod model \eqref{eq:particlemodel} and its stationary states.  Since the mean-field limit shows that the particle method is convergent with a order given by $N^{-1/2}\ln(1+N)$ \cite{convergenceWasserstein, Golse} it is sufficient to use particle simulations instead of the mean-field solvers.

We consider the domain $\Omega=\mathbb{T}^2$ where $\mathbb{T}^2$ is  the $2$-dimensional unit torus that can be identified with the unit square $[0,1)\times [0,1)\subset \R^2$ with periodic boundary conditions. 
To guarantee that particles  can only interact within a finite range we assume that they cannot interact with each other if they are separated by a distance  of at least 0.5 in each spatial direction, i.e. for $i\in\{1,2\}$ and all $x\in \Omega$ we require that $F(x-x',T(x))\cdot e_i=0$ for $|x-x'|\geq 0.5$ where $e_i$ denotes the standard basis for the Euclidean plane. This  property of the total interaction force $F$ in \eqref{eq:totalforce} is 
 referred to as the minimum image criterion \cite{periodicbc}.
Note that the coefficient functions $f_R$ and $f_A$  in \eqref{eq:repulsionforcemodel} and \eqref{eq:attractionforcemodel} in the K\"{u}cken-Champod model \eqref{eq:particlemodel} satisfy the minimum image criterion 
if a spherical cutoff radius of length $0.5$ is introduced for the repulsion and attraction forces.

\begin{rem}[Minimum image criterion]
The minimum image criterion is a natural condition for large systems of interacting particles on a domain with periodic boundary conditions. In numerical simulations, it is sufficient to record and propagate only the particles in the original simulation box. Besides, the minimum image criterion guarantees that the size of the domain is large enough compared to the range of the total force. In particular, non-physical artifacts due to periodic boundary conditions are prevented.
\end{rem}

\subsection{Numerical methods}\label{sec:numericalmethods}

To solve the $N$ particle ODE system \eqref{eq:particlemodel} we consider periodic boundary conditions and apply either the simple explicit Euler scheme or higher order methods such as the Runge-Kutta-Dormand-Prince method, all resulting in very similar simulation results.

\subsection{Numerical results}\label{sec:numericalresults}
We show numerical results for the K\"{u}cken-Champod model \eqref{eq:particlemodel} on the domain $\Omega=T^2$ where the force coefficients are given by \eqref{eq:repulsionforcemodel} and \eqref{eq:attractionforcemodel}. In particular, we investigate the patterns of the corresponding stationary solutions. Unless stated otherwise we consider the parameter values in \eqref{eq:parametervaluesRepulsionAttraction} and the spatially homogeneous tensor field  $T=\chi s \otimes s+l\otimes l$ with $l=(1,0)$ and $s=(0,1)$. Besides, we assume that the initial condition is a Gaussian with mean $\mu=0.5$ and  standard deviation $\sigma=0.005$ in each spatial direction.

\subsubsection{Dependence on the initial distribution}
The stationary solution to \eqref{eq:particlemodel} for $N=1200$ particles is shown in Figure \ref{fig:numericalsol_initial} for $\chi=0.2$ and $\chi=0.7$, respectively, for different initial data. One can clearly see that the long-time behavior of the solution depends on the chosen initial conditions and the choice of $\chi$. As discussed in Section \ref{sec:interpretationforce} the absence of attraction forces along $s=(0,1)$ for $\chi=0.2$ leads to a solution stretched along the entire vertical axis and particles in a neighborhood of these line patterns are attracted. For $\chi=0.7$ the domain of attraction is significantly smaller and the particles remain isolated or build small clusters  if they are initially too far apart from  other particles. This results in many accumulations of smaller numbers of particles for $\chi=0.7$. Note that these accumulations have the shape of ellipses for $\chi=0.7$ which is consistent with the analysis in Section \ref{sec:analysis}, independent of the choice of the initial data. Because of the significantly larger number of clusters for randomly uniformly distributed initial data the resulting ellipse patterns consist of fewer particles compared to Gaussian initial data with a small standard deviation. 
Since initial data  spread over the entire simulation domain leads to multiple copies of the patterns which occur for concentrated initial data, this motivates to consider concentrated initial data for getting a better understanding of the patterns which can be generated. In the sequel we restrict ourselves to concentrated initial data so that all particles can initially interact with each other. Besides, it is sufficient to consider smaller numbers of particles to get a better understanding of the formation of the stationary pattern to increase the speed of convergence. Further note that  for $\chi=0.2$ and randomly uniformly distributed initial data the convergence to the stationary solution, illustrated in Figure \ref{fig:numericalsol_uniform02}, is very slow which implies that the fingerprint formation might also be slow. However, the K\"ucken-Champod model \eqref{eq:particlemodel} is able to generate very interesting patterns over time $t$, as shown in Figure \ref{fig:numericalsol_uniform02}.  
Besides, it is of interest how the resulting patterns depend on the initial data and whether the ellipse pattern is stable for $\chi=0.7$. In Figure \ref{fig:numericalsol_initialnoperturb} we consider $N=600$ particles and Gaussian initial data with mean $\mu=0.5$ and standard deviation $\sigma=0.005$ in each spatial direction. Given the initial position of the particles for the simulation in Figure \ref{fig:numericalsol_initialnoperturb} we perturb the initial position of each particle $j$ by $\delta Z_j$ where $Z_j$ is drawn from a bivariate standard normal distribution and $\delta\in\{0.0001, 0.001, 0.01, 0.1\}$. The corresponding stationary patterns are illustrated in Figures \ref{fig:numericalsol_initialperturb00001} to \ref{fig:numericalsol_initialperturb01} and one can clearly see that the ellipse pattern is stable under small perturbations. 
\begin{figure}[ht]
\begin{minipage}{\textwidth}
\centering
    \subfloat[$\chi=0.2$]{\includegraphics[width=0.245\textwidth]{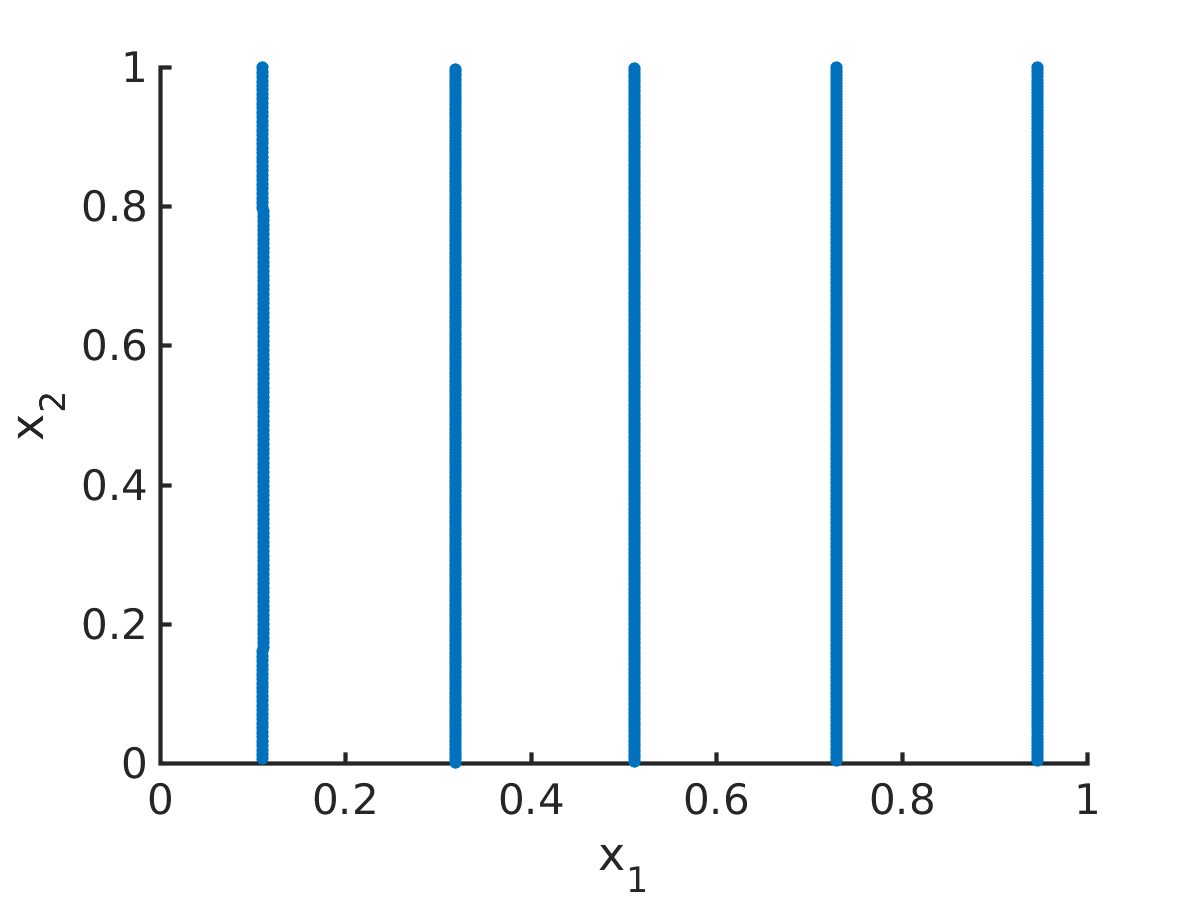}}        
    \subfloat[$\chi=0.7$]{\includegraphics[width=0.245\textwidth]{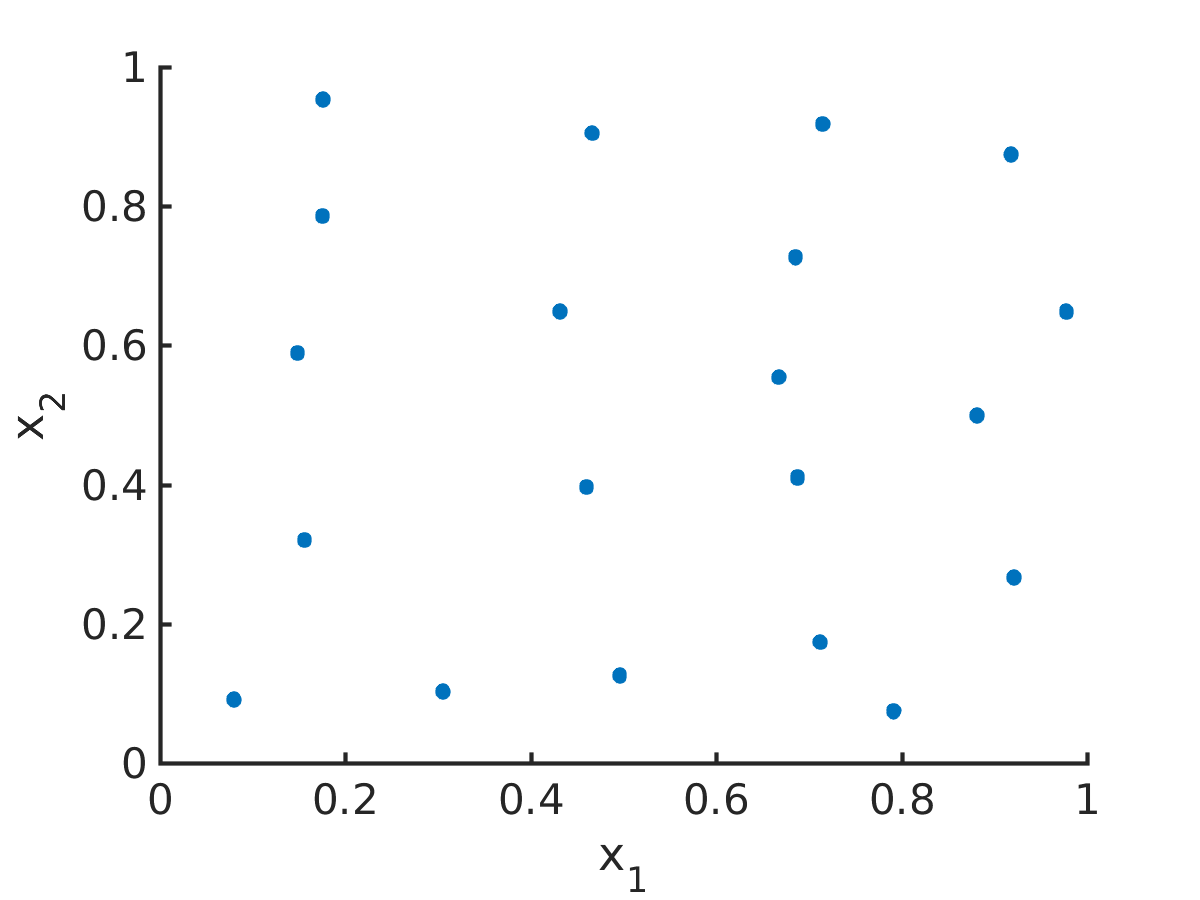}}
    \subfloat[$\chi=0.7$, \-enlarged]{\includegraphics[width=0.245\textwidth]{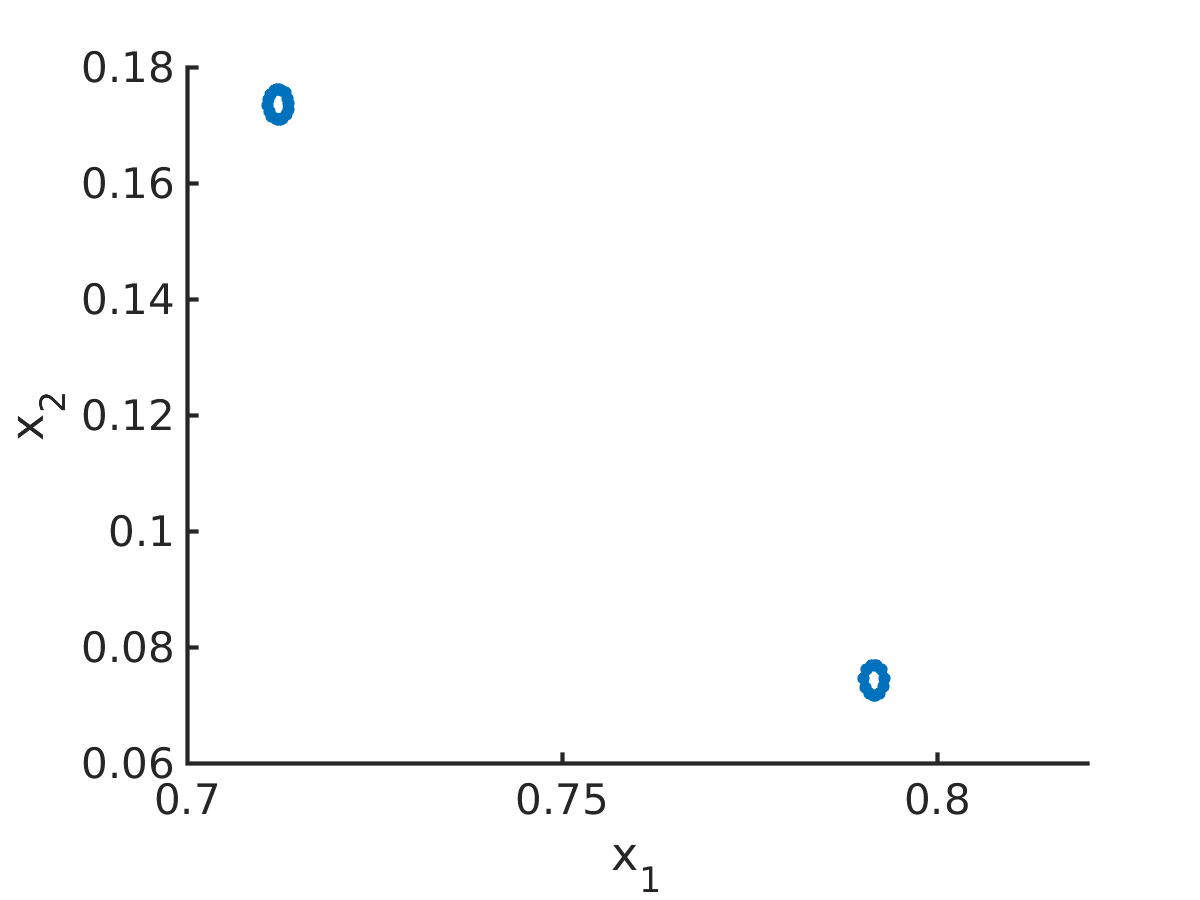}}
    \subfloat[$\chi=0.7$, \-enlarged]{\includegraphics[width=0.245\textwidth]{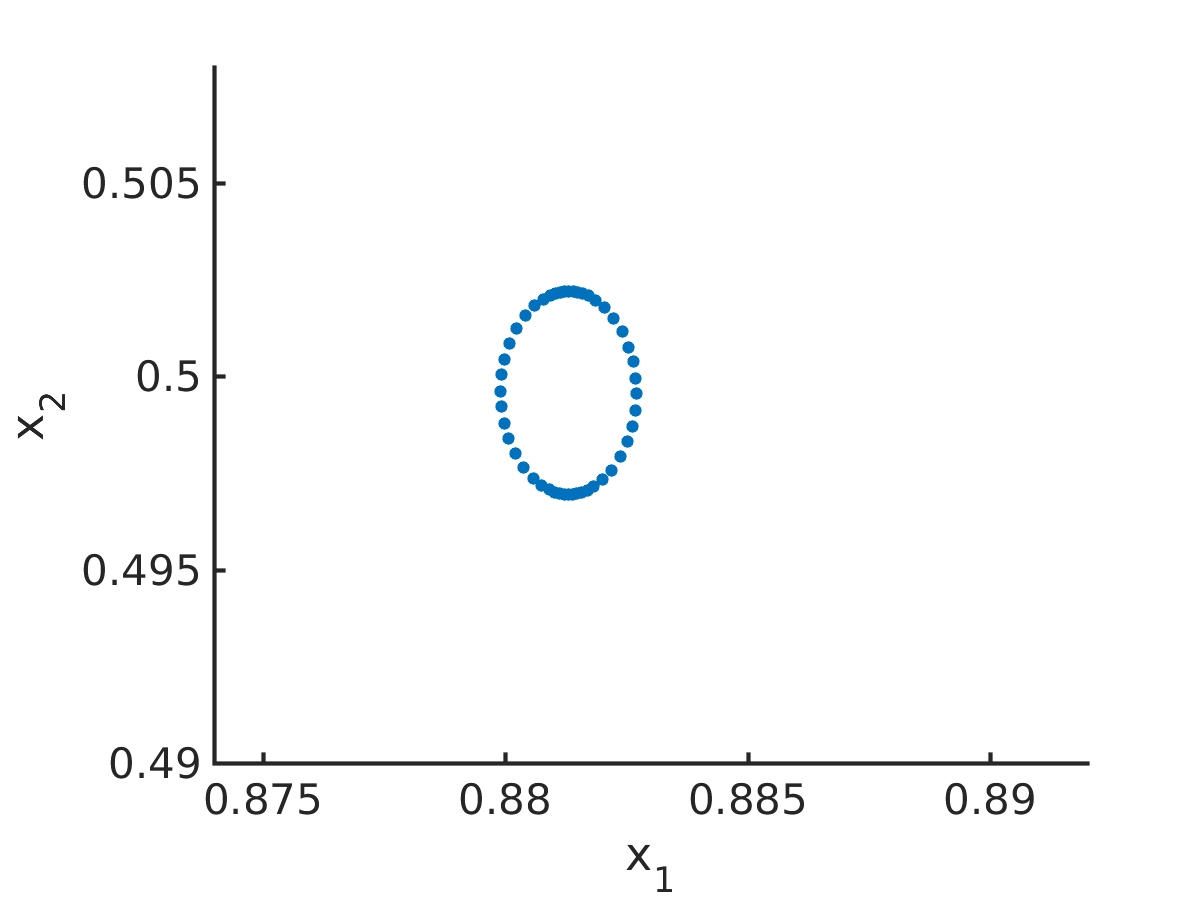}}
    \vspace{3mm}
Randomly uniformly distributed
\end{minipage}

\begin{minipage}{\textwidth}
\centering
    \subfloat[$\chi=0.2$]{\includegraphics[width=0.245\textwidth]{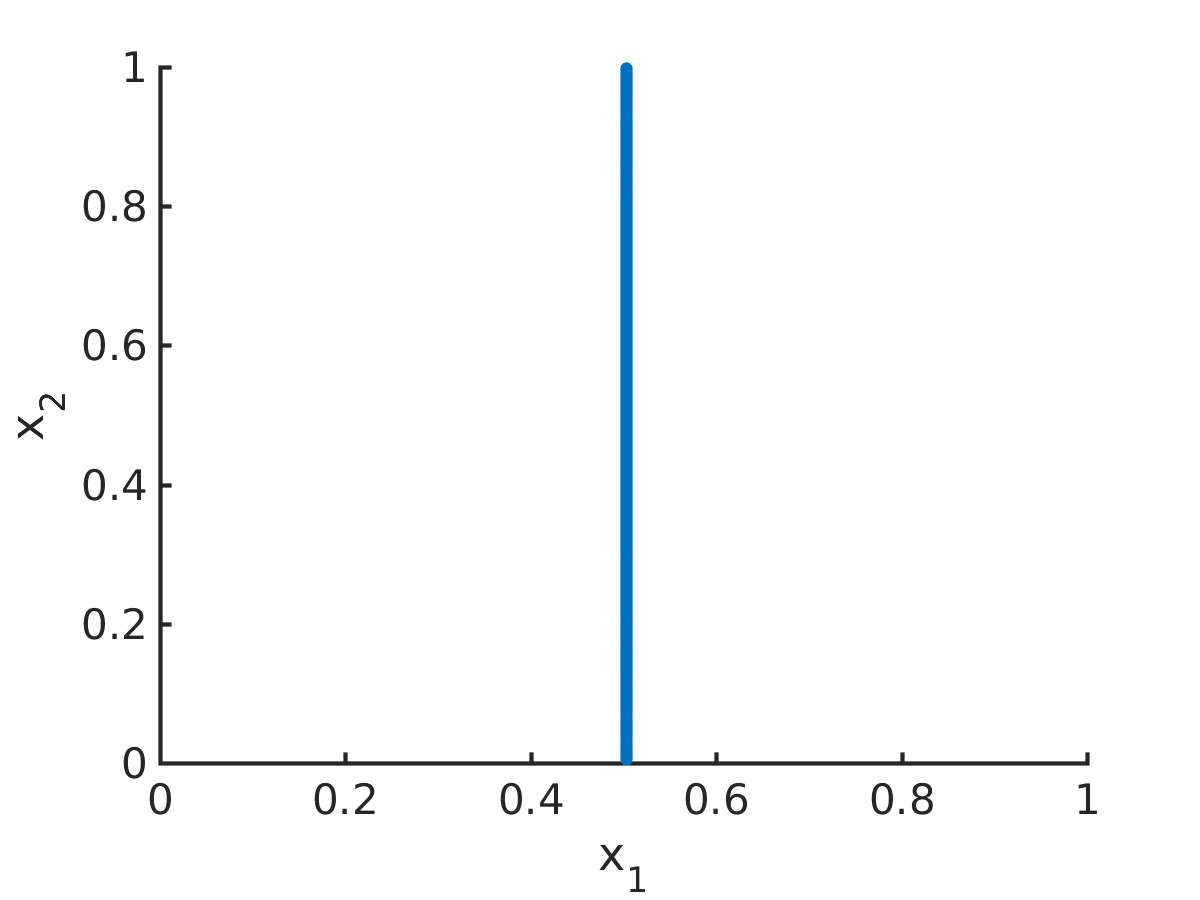}}        
    \subfloat[$\chi=0.7$]{\includegraphics[width=0.245\textwidth]{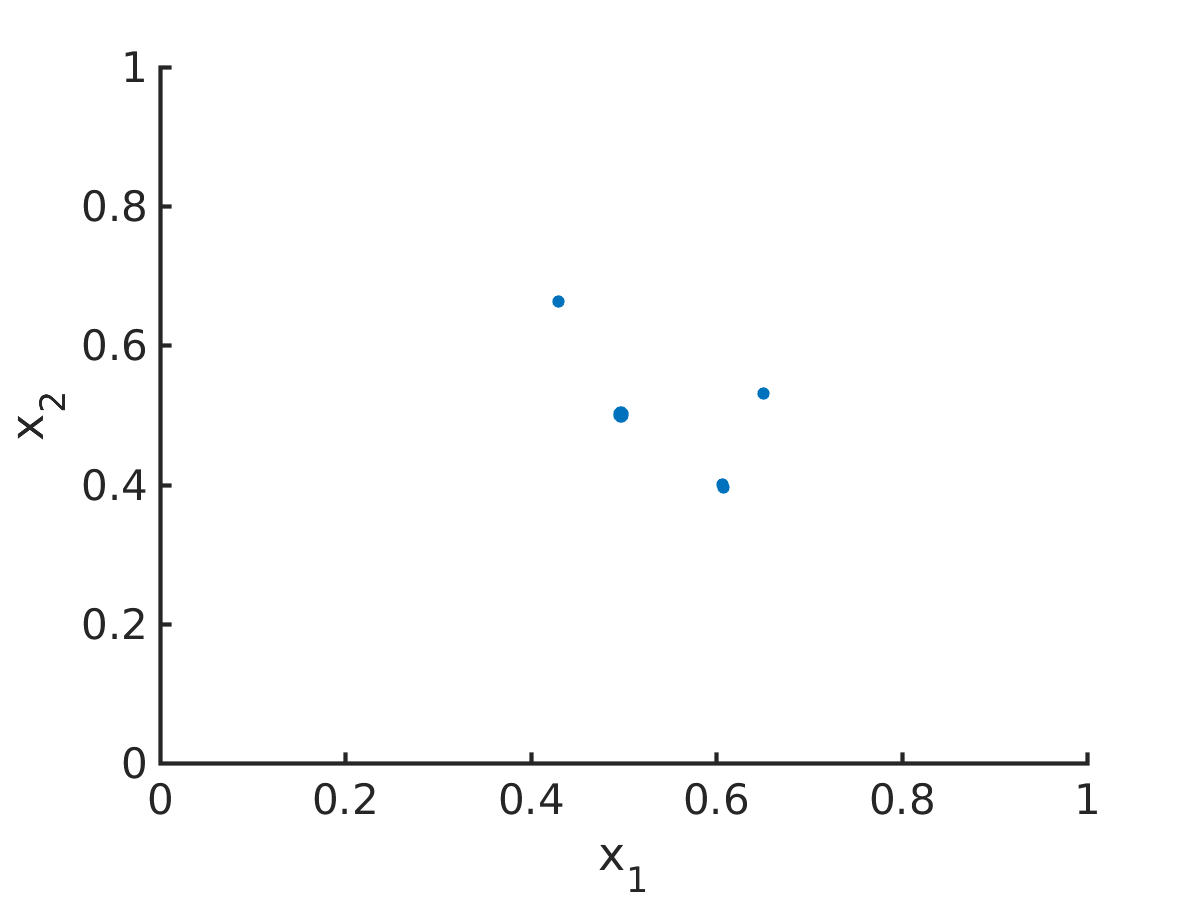}}
    \subfloat[$\chi=0.7$, \-enlarged]{\includegraphics[width=0.245\textwidth]{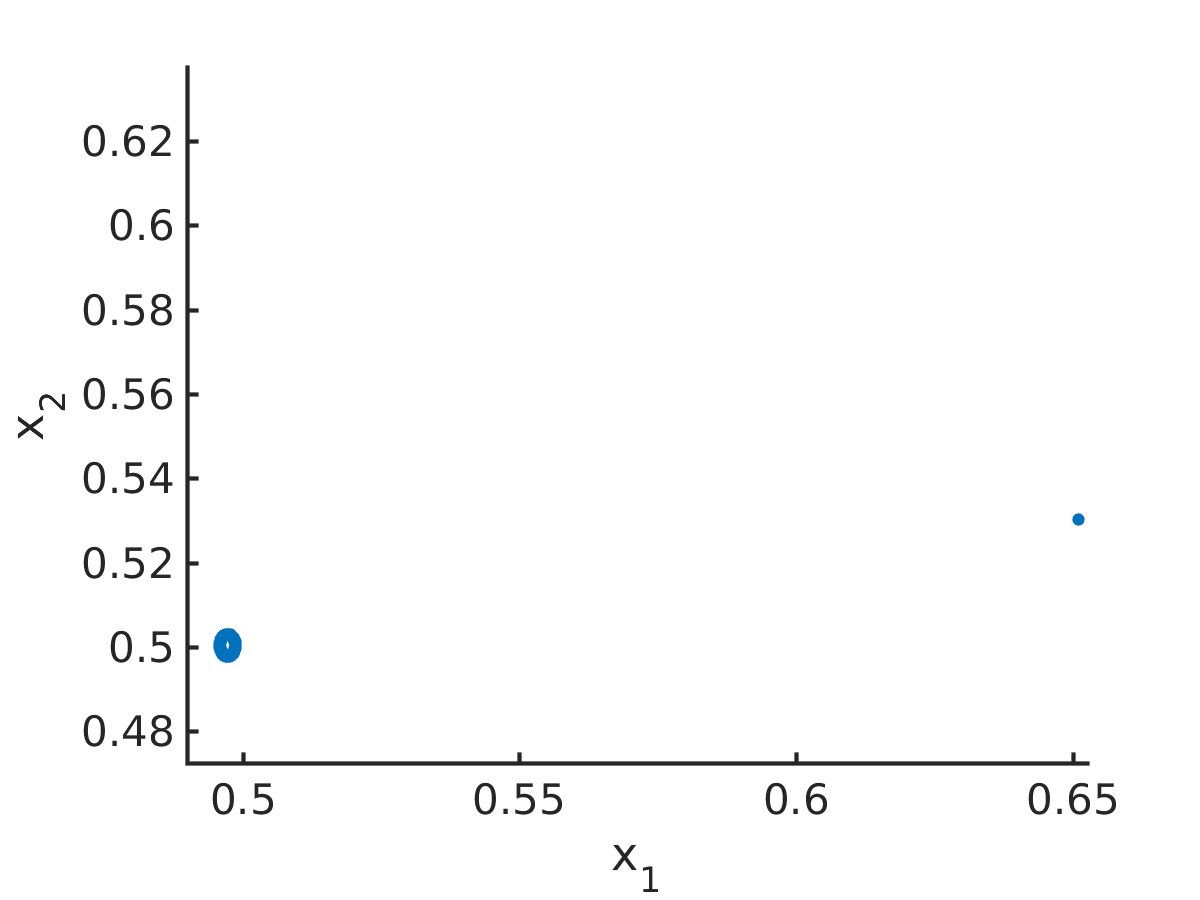}}
    \subfloat[$\chi=0.7$, \-enlarged]{\includegraphics[width=0.245\textwidth]{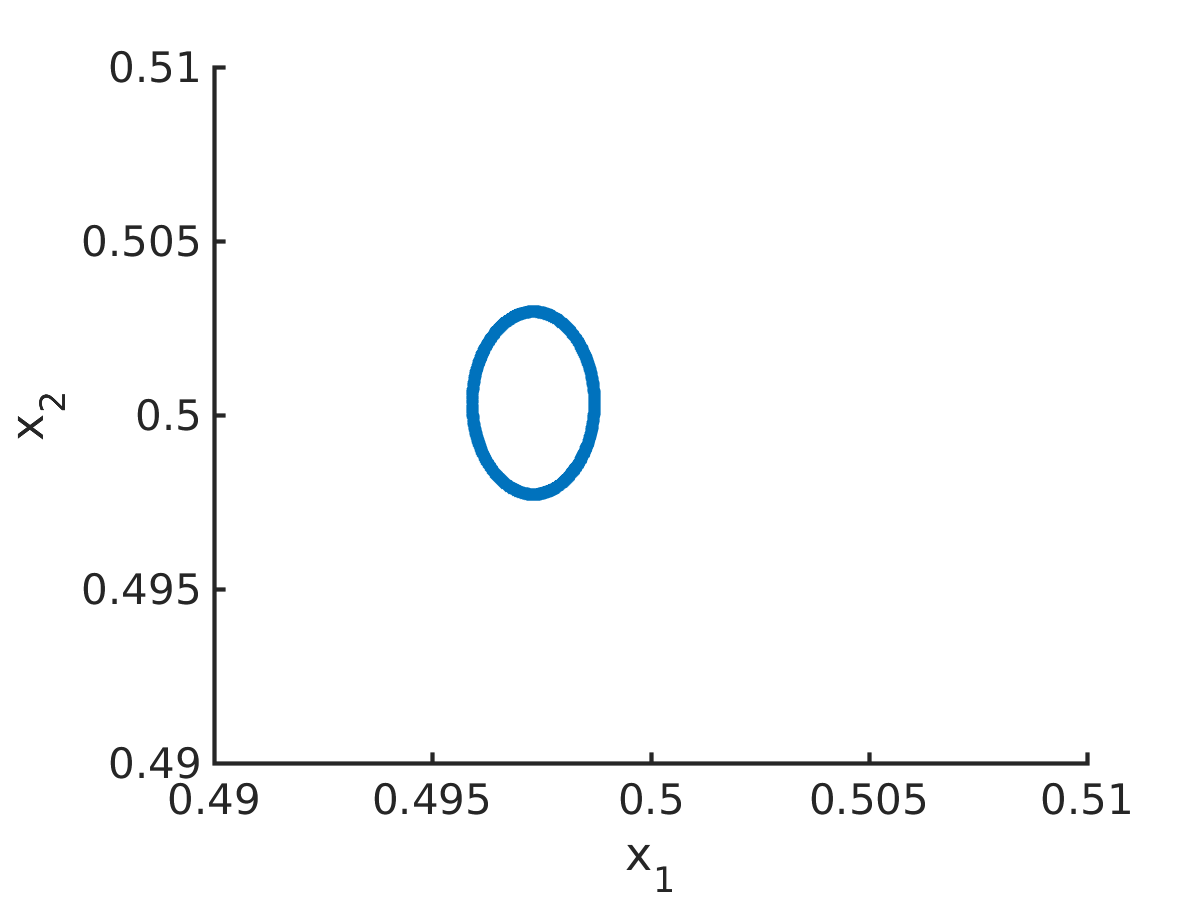}}
 \vspace{3mm}   
Gaussian with  $\sigma=0.05$
\end{minipage}
    \caption{Stationary solution to the K\"{u}cken-Champod model  \eqref{eq:particlemodel} for $N=1200$ and different initial data for $\chi=0.2$ (left) and $\chi=0.7$ (right)}\label{fig:numericalsol_initial}
    \end{figure}
    
\begin{figure}[ht]
    \centering
    \subfloat[$t=0$]{\includegraphics[width=0.24\textwidth]{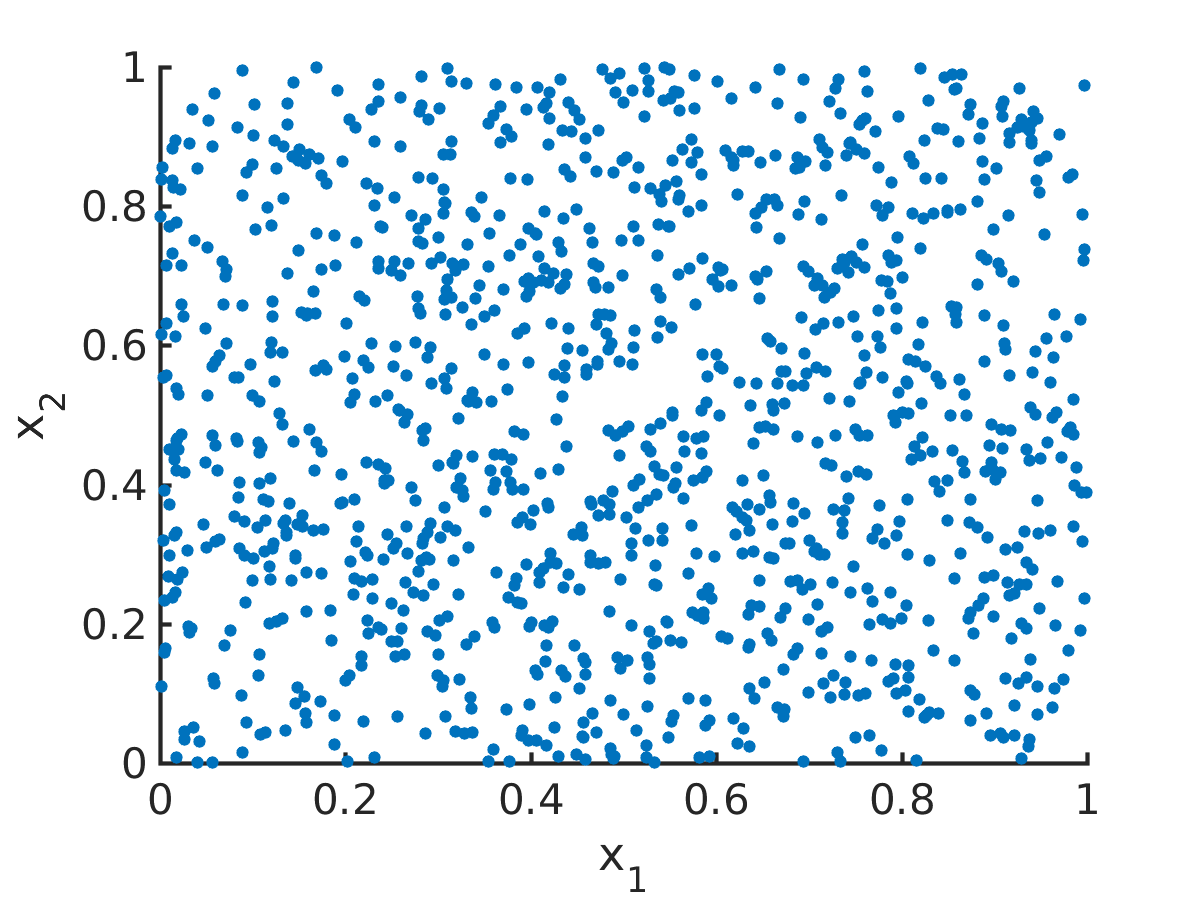}}    
    \subfloat[$t=50000$]{\includegraphics[width=0.24\textwidth]{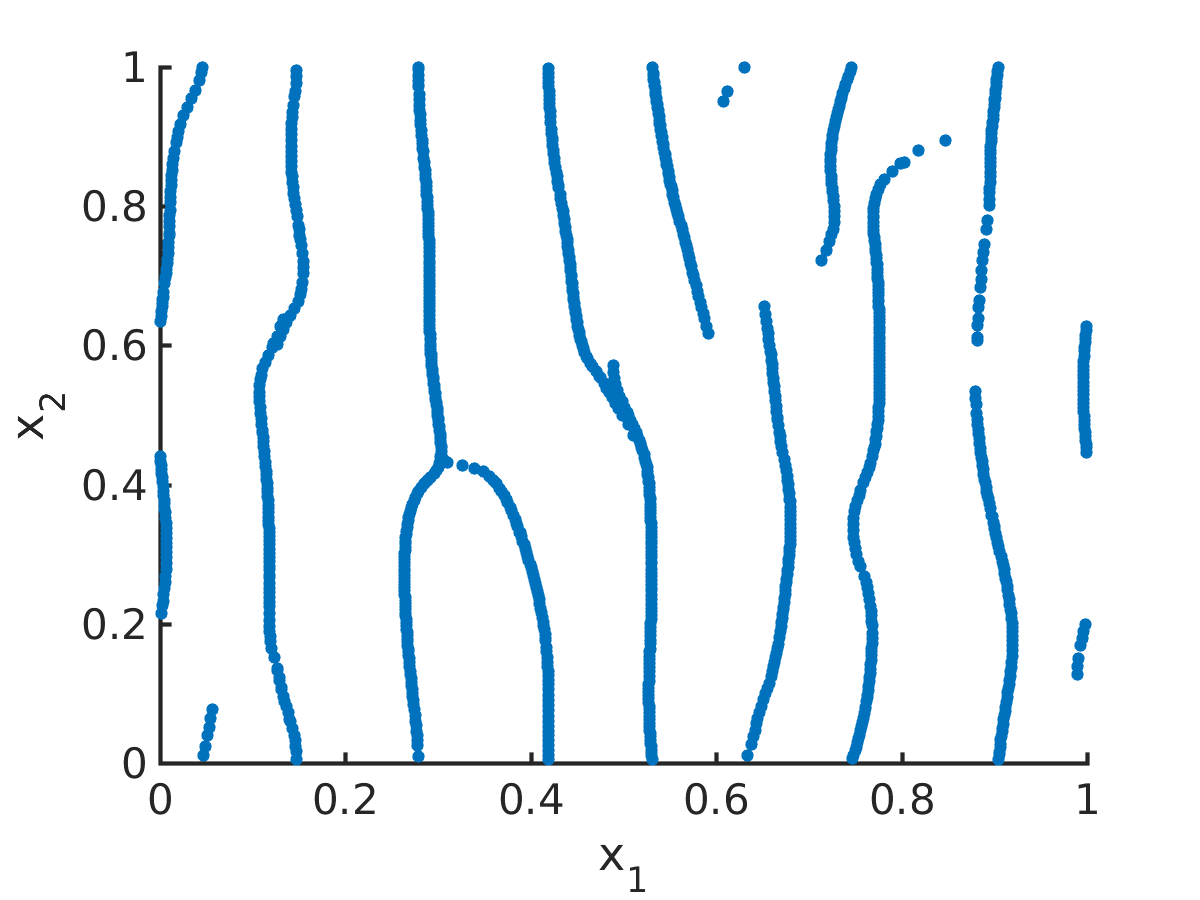}}
\subfloat[$t=110000$]{\includegraphics[width=0.24\textwidth]{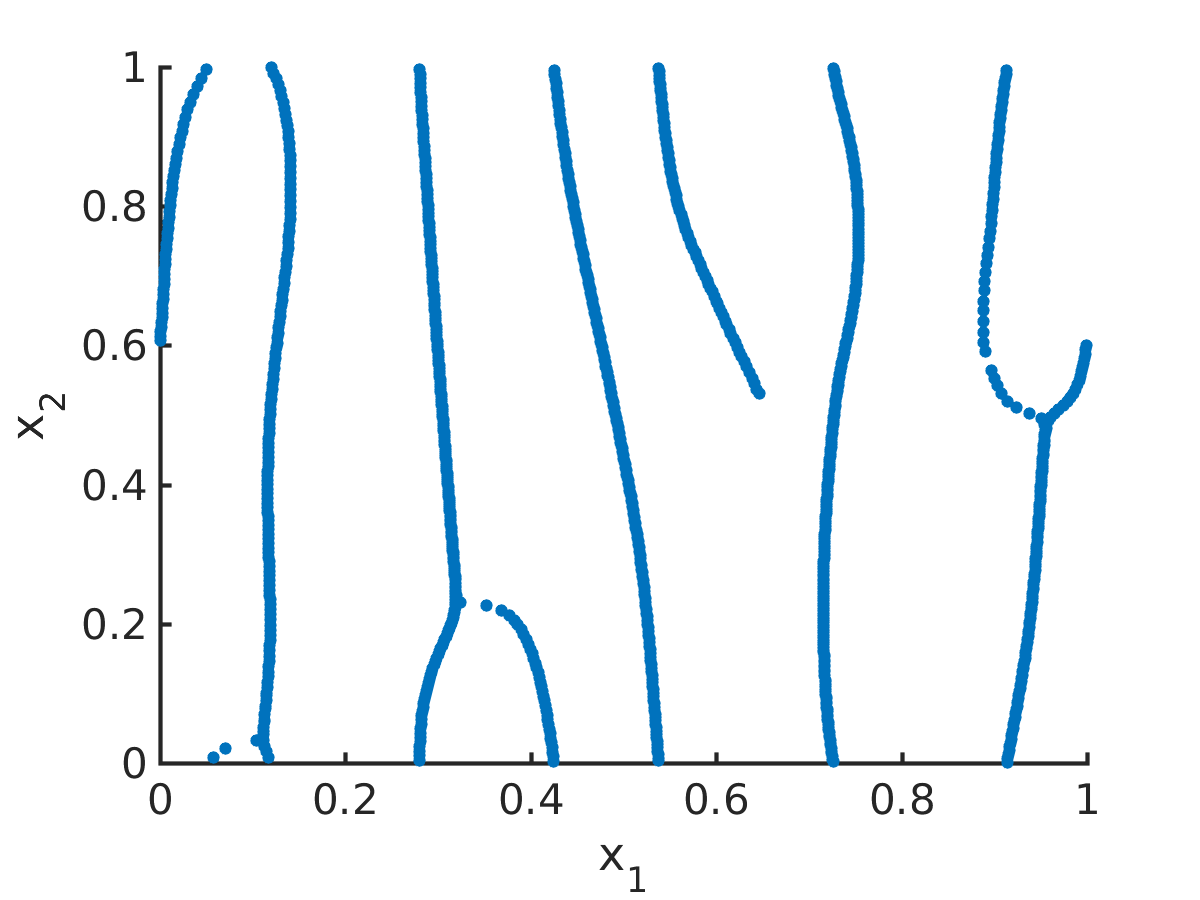}}
\subfloat[$t=710000$]{\includegraphics[width=0.24\textwidth]{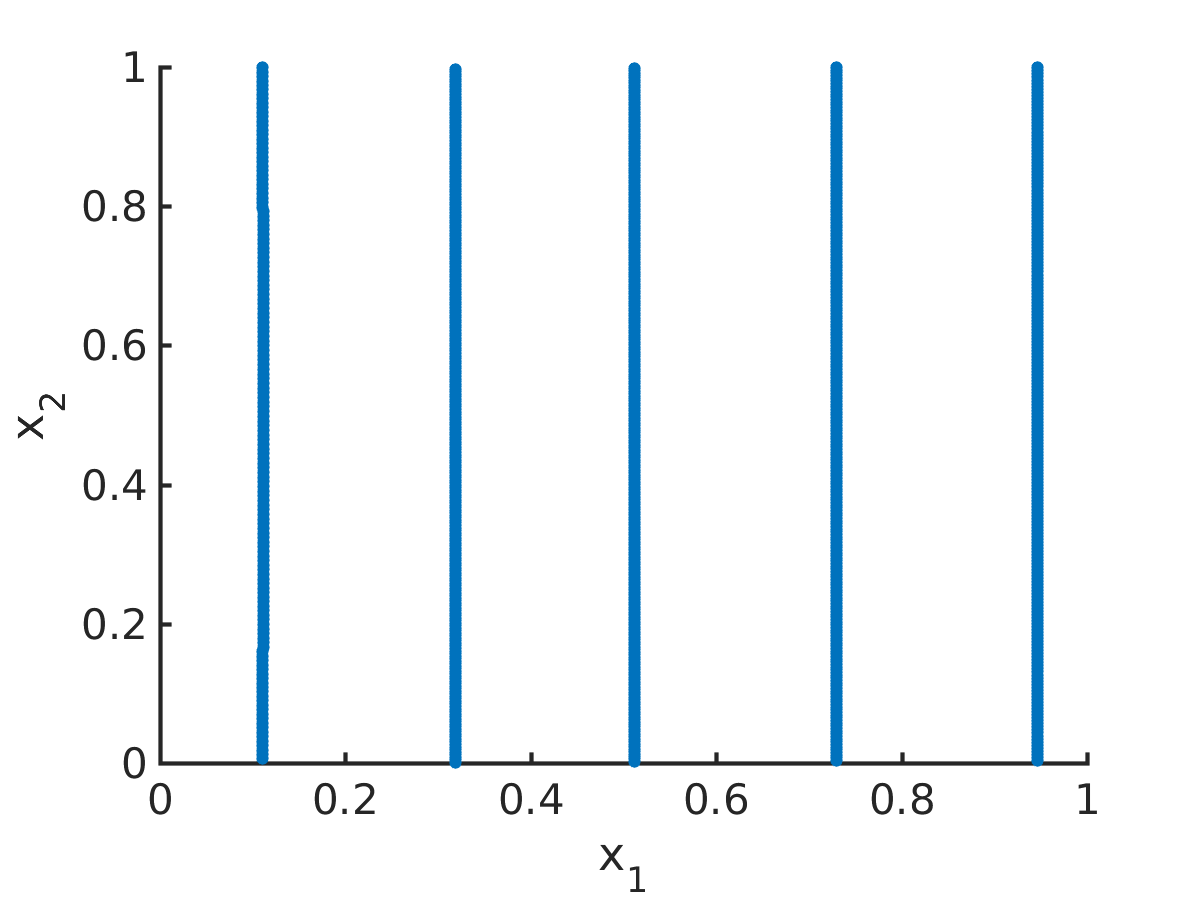}}
    \caption{Numerical solution to the K\"{u}cken-Champod model  \eqref{eq:particlemodel} for $N=1200$ and randomly uniformly distributed initial data for $\chi=0.2$ and different times $t$}\label{fig:numericalsol_uniform02}    
\end{figure}

\begin{figure}[ht]
    \centering
\subfloat[$\delta=0$]{\includegraphics[width=0.198\textwidth]{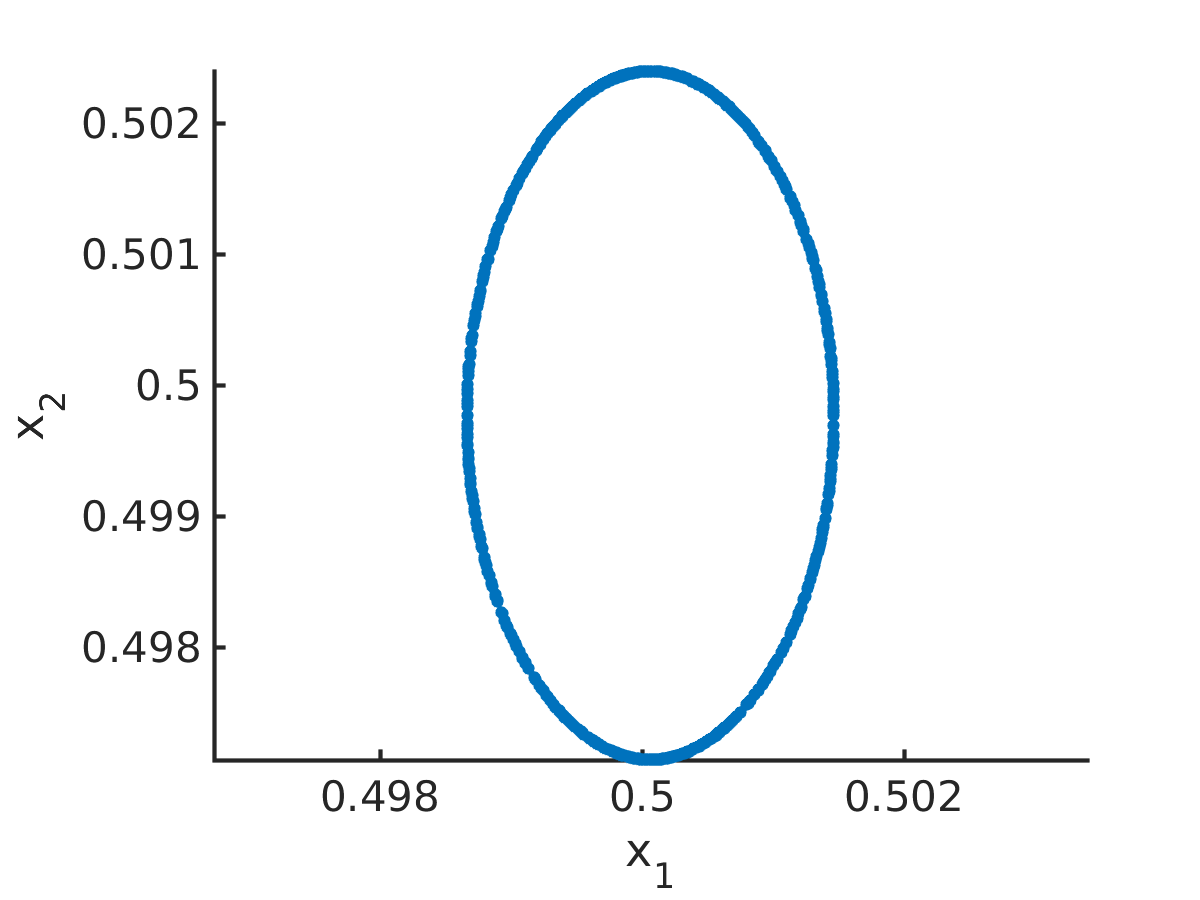}\label{fig:numericalsol_initialnoperturb}}    
\subfloat[$\delta=0.0001$]{\includegraphics[width=0.198\textwidth]{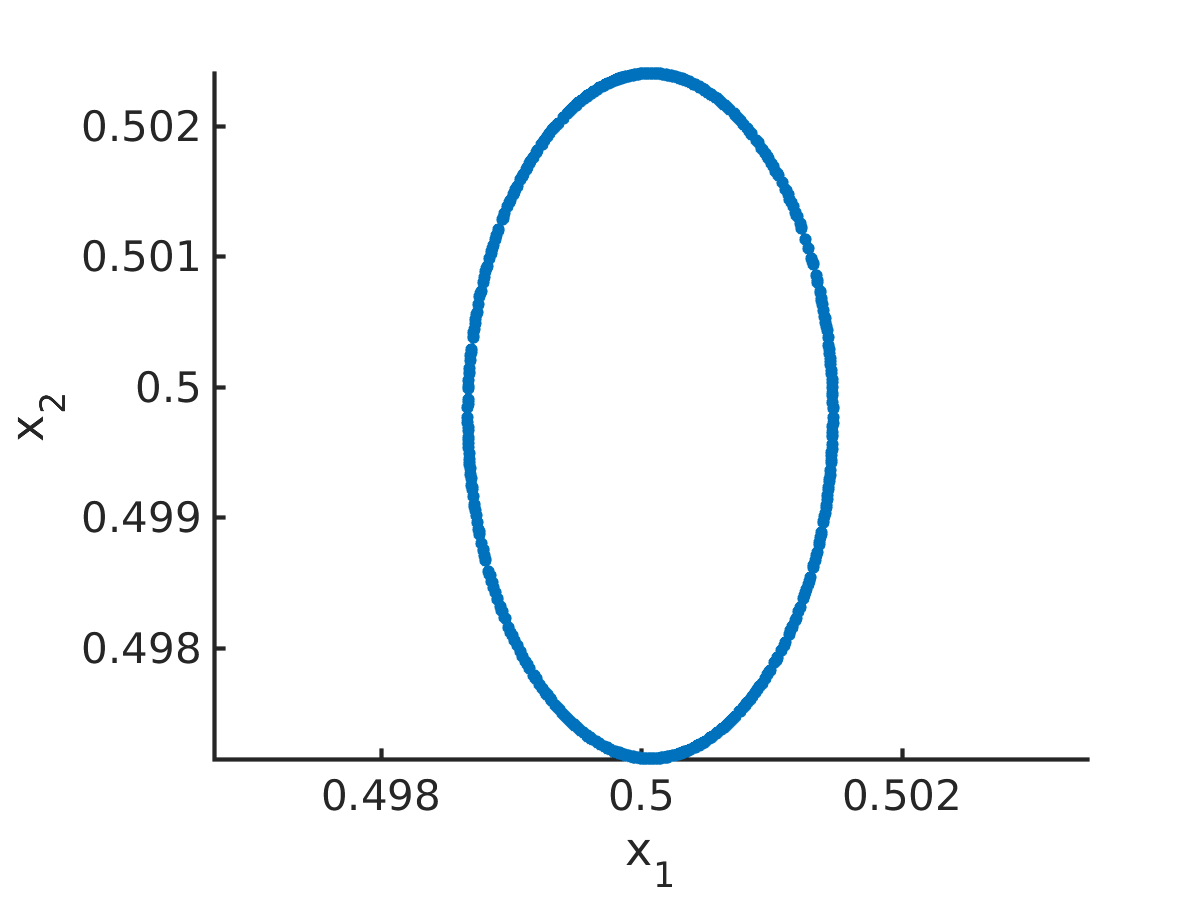}\label{fig:numericalsol_initialperturb00001}}\subfloat[$\delta=0.001$]{\includegraphics[width=0.198\textwidth]{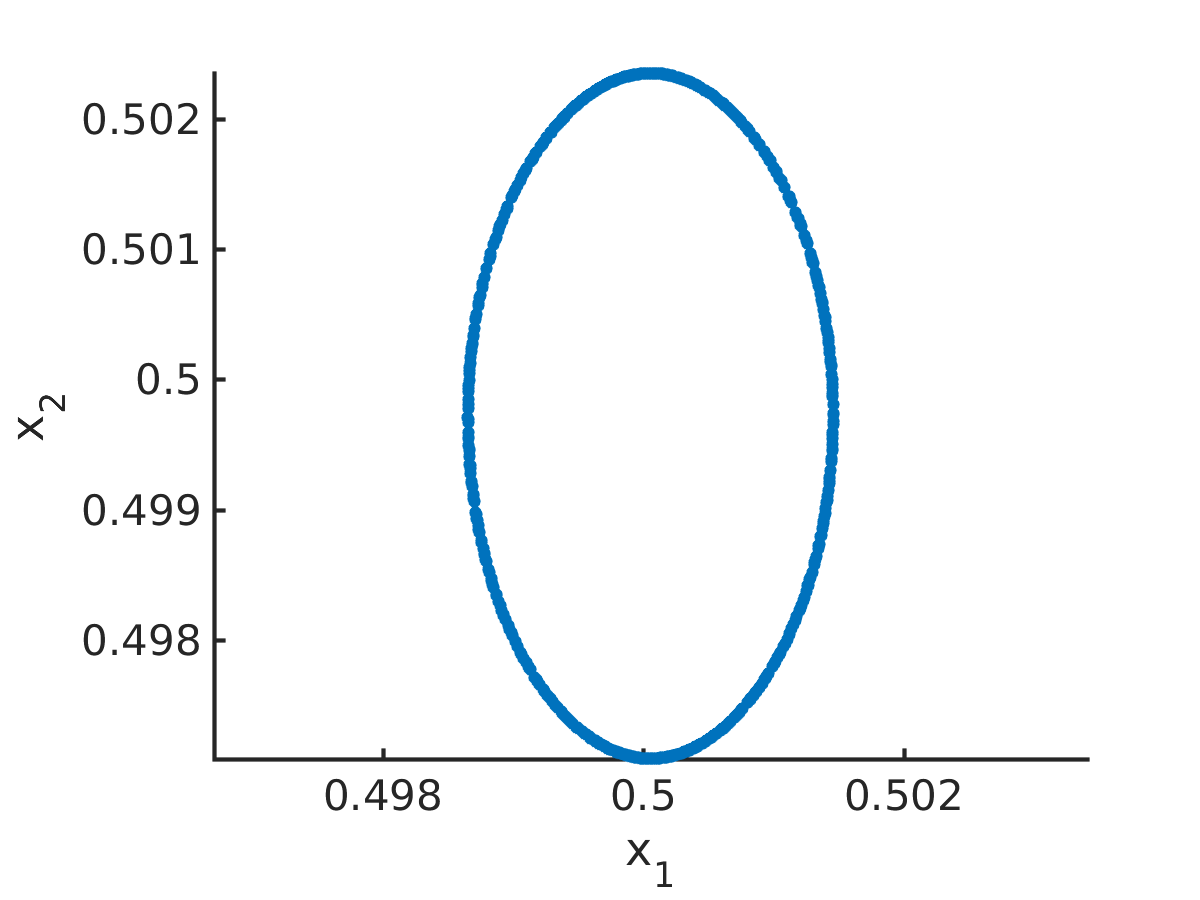}}
\subfloat[$\delta=0.01$]{\includegraphics[width=0.198\textwidth]{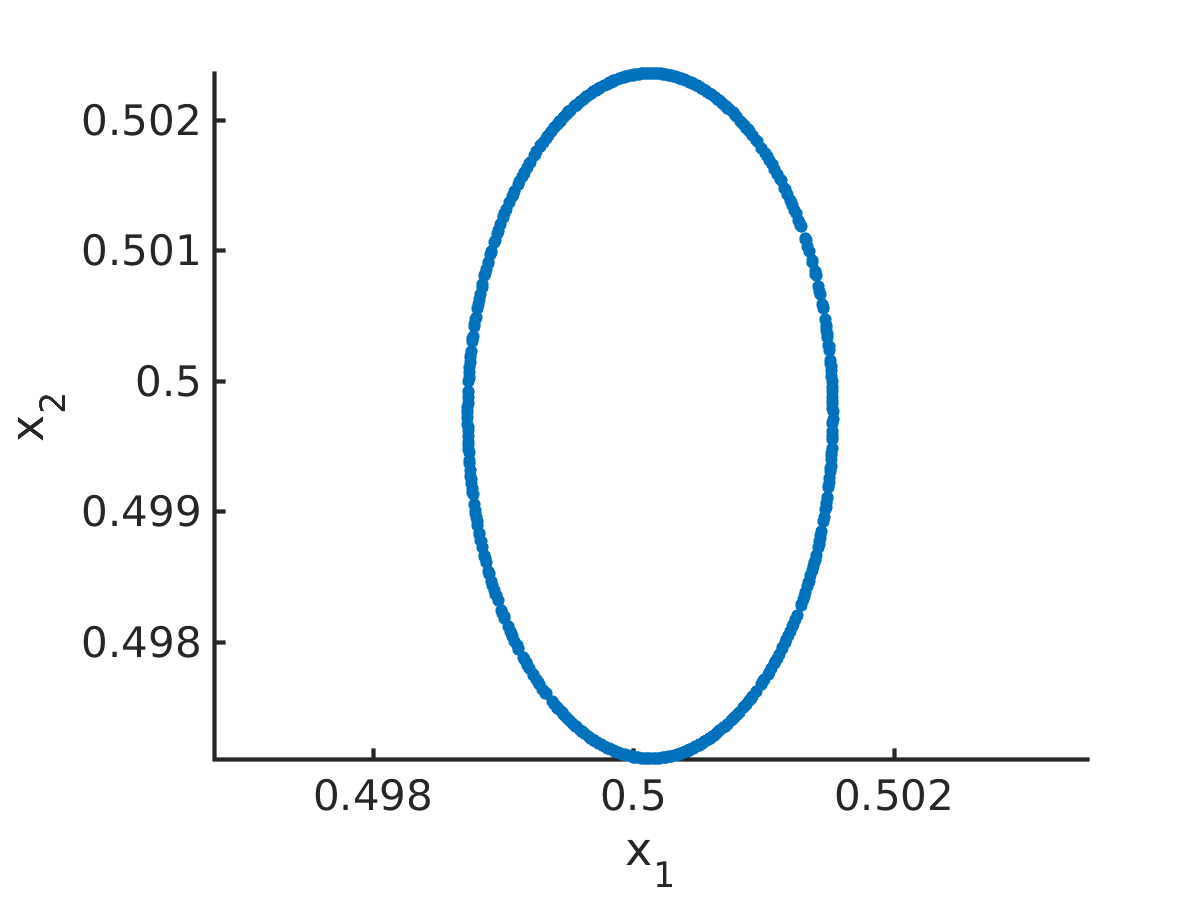}}
\subfloat[$\delta=0.1$]{\includegraphics[width=0.198\textwidth]{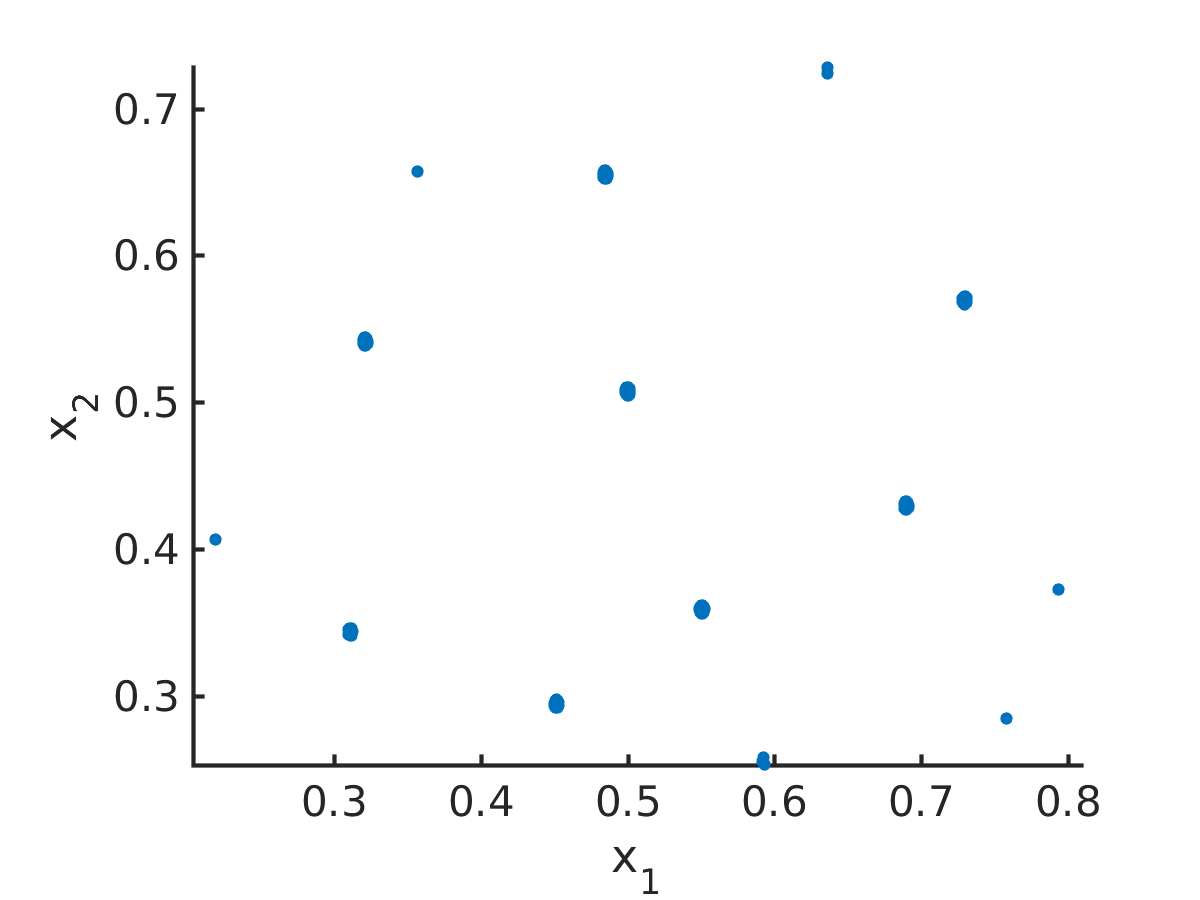}\label{fig:numericalsol_initialperturb01}}
    \caption{Stationary solution to the K\"{u}cken-Champod model \eqref{eq:particlemodel} for $N=600$ and Gaussian initial data ($\mu=0.5$,  $\sigma=0.005$) in each spatial direction in Figure \ref{fig:numericalsol_initialnoperturb} and perturbation of the initial position of each particle $j$ by $\delta Z_j$ where $Z_j$ is drawn from a bivariate standard normal distribution and $\delta\in\{0.0001, 0.001, 0.01, 0.1\}$ in Figures  \ref{fig:numericalsol_initialperturb00001} to \ref{fig:numericalsol_initialperturb01}}\label{fig:numericalsol_initialperturbation}
\end{figure}

\subsubsection{Evolution of the pattern}\label{sec:numericsevolution}
In Figure \ref{fig:evolution_pattern}, the numerical solution of the particle model \eqref{eq:particlemodel} on $\Omega=\mathbb{T}^2$ for $N=1200$ is shown for $\chi=0$, $\chi=0.2$ and $\chi=1.0$ for different times $t$ for Gaussian initial data with mean $\mu=0.5$ and standard deviation $\sigma=0.005$ in each spatial direction. Compared to the initial data one can clearly see  that the solution for $\chi=0$ and $\chi=0.2$, respectively, is stretched  along the vertical axis, i.e. along $s=(0,1)$, as time increases. This is consistent with the observations in Section \ref{sec:interpretationforce} since  the forces along the vertical axis for $\chi=0$ and $\chi=0.2$ are purely repulsive. In contrast, the long-range attraction forces for $\chi=1$ prohibit stretching of the solution and the isotropic forces for $\chi=1$ lead to ring as stationary solution whose radius is approximately $0.0017$. The different sizes of the stationary patterns are also illustrated in Figure \ref{fig:evolution_pattern} where the solutions for $\chi=0$ and $\chi=0.2$ are shown on the unit square, while a smaller axis scale is considered for $\chi=1$ because of the small radius of the ring for $\chi=1$.  Besides,  the convergence to the equilibrium state is very fast for $\chi=1$ compared to $\chi=0$ and $\chi=0.2$.

\begin{figure}[ht]
    \centering
           \subfloat[$\chi=0$]{
        \includegraphics[width=0.33\textwidth]{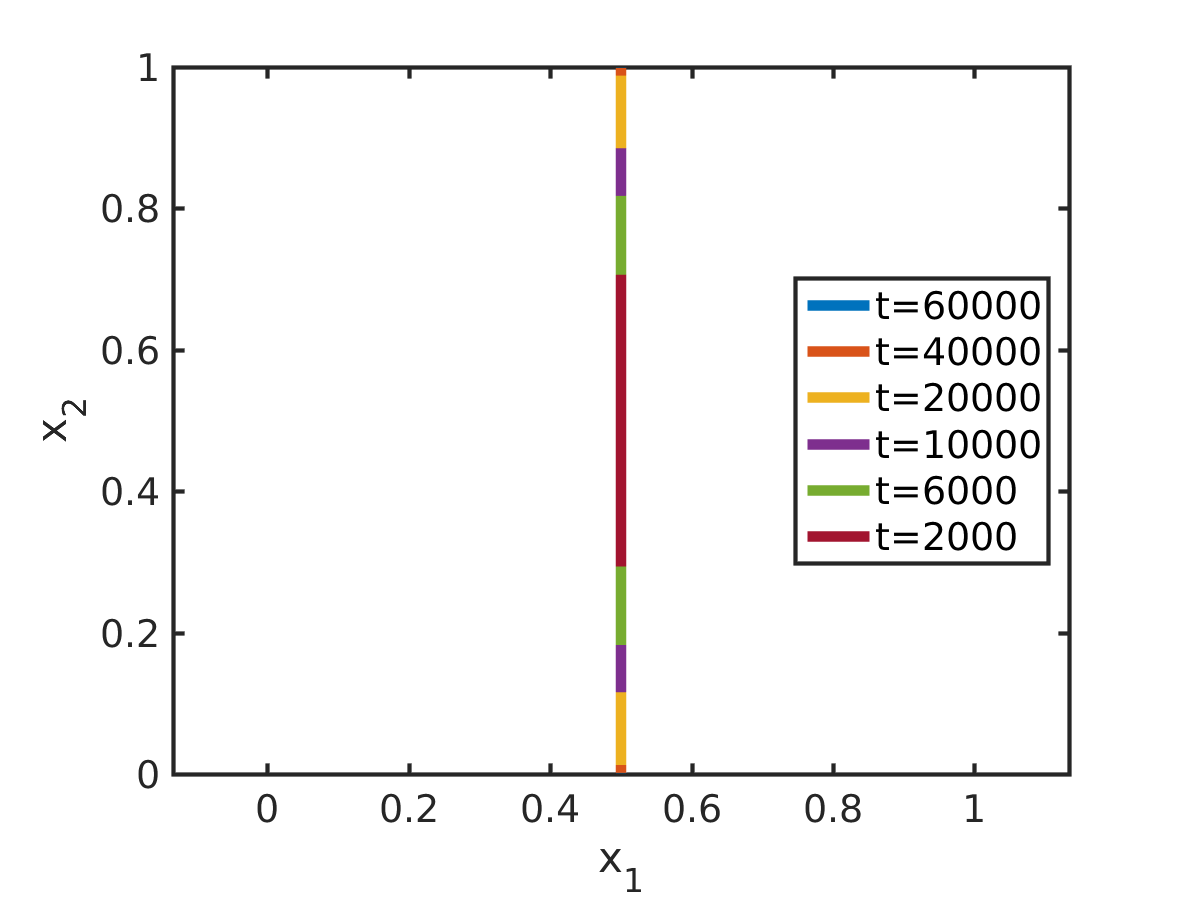}\label{fig:evolution_pattern_chi0}}
    \subfloat[$\chi=0.2$]{\includegraphics[width=0.33\textwidth]{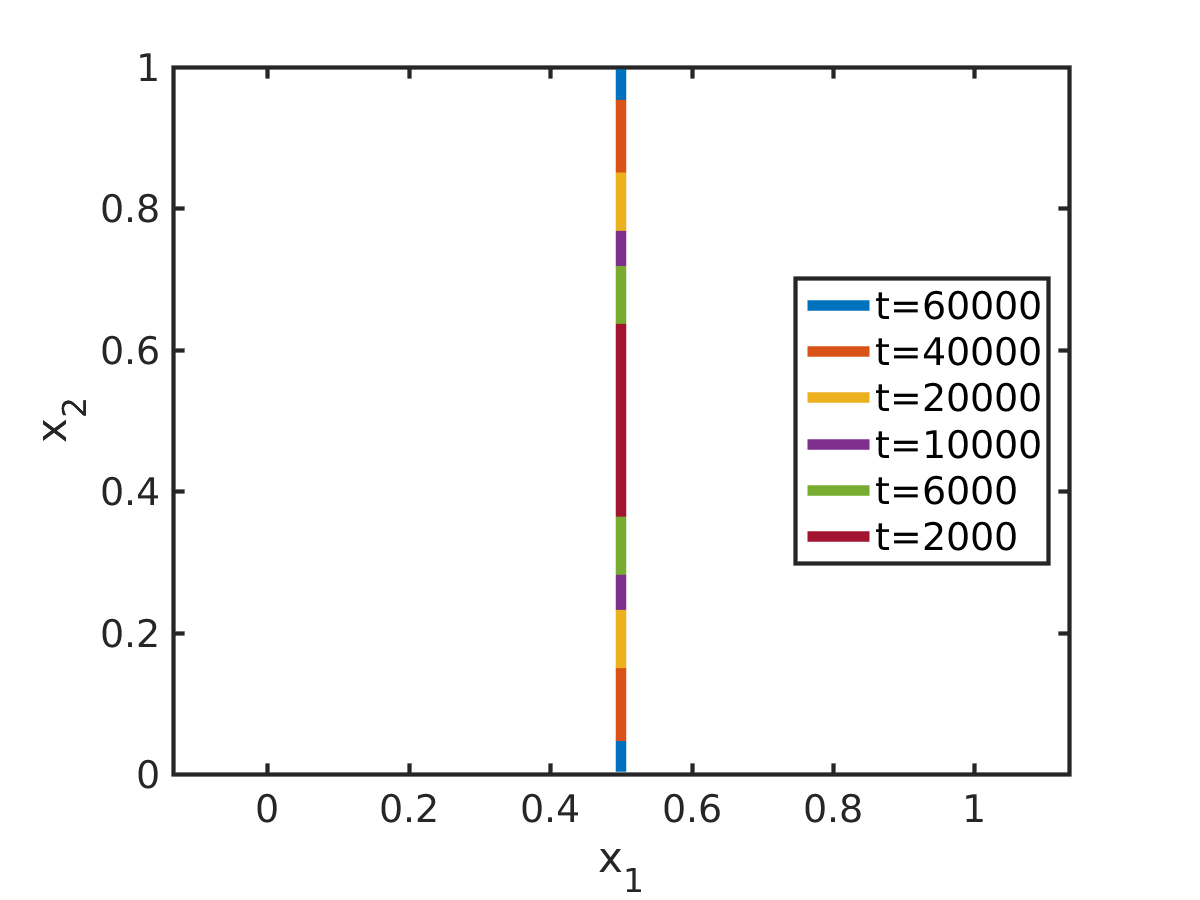}}
   \subfloat[$\chi=1$]{\includegraphics[width=0.33\textwidth]{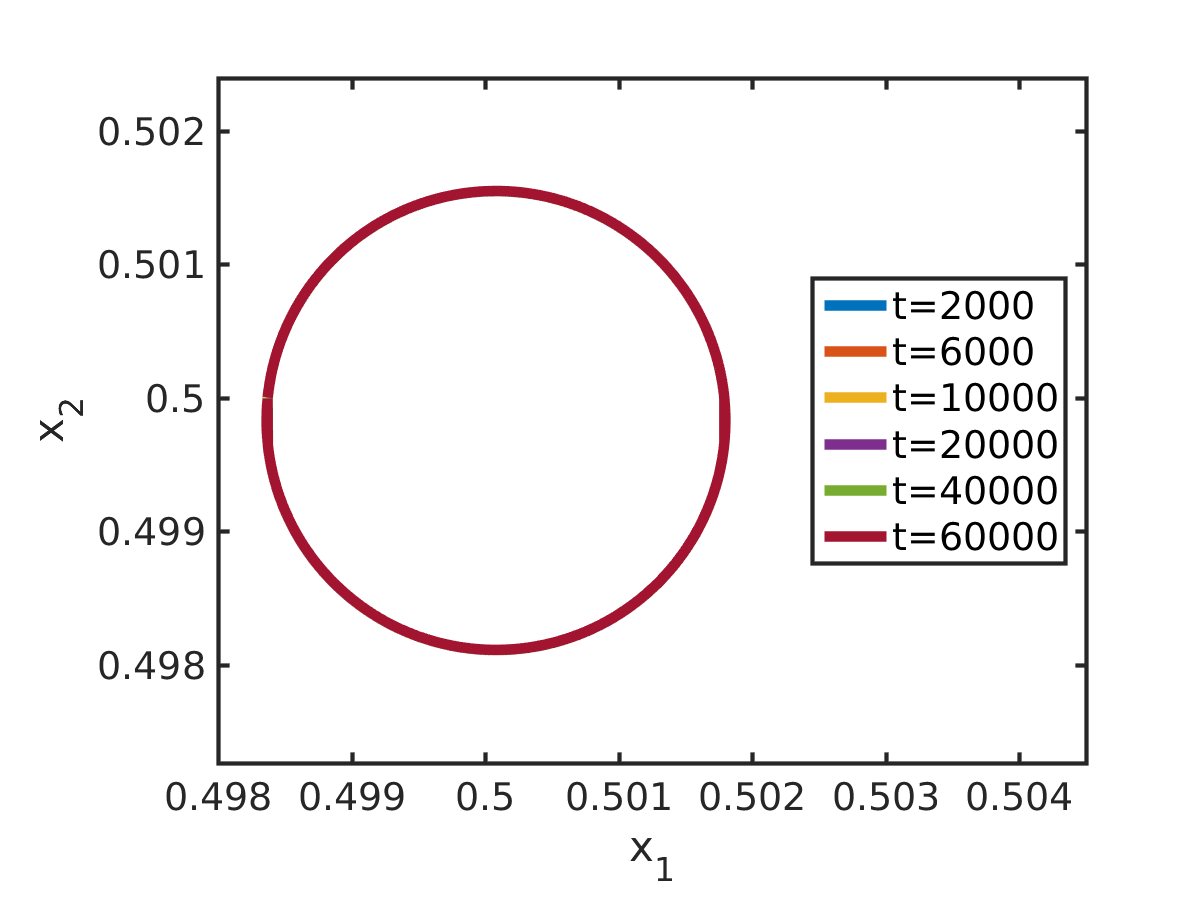}}
 
 \caption{Numerical solution  to the K\"{u}cken-Champod model \eqref{eq:particlemodel} for different times $t$ and different values of $\chi$ for $N=1200$ and Gaussian initial data ($\mu=0.5$, $\sigma=0.005$) in each spatial direction  \label{fig:evolution_pattern}}
\end{figure}

\subsubsection{Dependence on parameter $\chi$}
In this section we investigate the dependence of the equilibria to \eqref{eq:particlemodel} on the parameter $\chi$ which strongly influences the pattern formation. Given $N=600$ particles which are initially equiangular distributed on a circle with center $(0.5,0.5)$ and radius $0.005$  the stationary solution to \eqref{eq:particlemodel}  is displayed for different values of $\chi$ in Figures \ref{fig:numericalsolDependenceChiNozoom} and \ref{fig:numericalsolDependenceChi}. Note that the same simulation results are shown in Figures \ref{fig:numericalsolDependenceChiNozoom} and \ref{fig:numericalsolDependenceChi} for different axis scales.
In Figure \ref{fig:numericalsolDependenceChiNozoom} one can see that the size of the pattern is significantly larger for small values of $\chi$ due to stretching along the vertical axis (cf. Section \ref{sec:analysis}). For small values of $\chi$ the stationary solution is  a 1D stripe pattern of equally distributed particles along the entire vertical axis, while for larger values of $\chi$ the stationary solution can be a shorter vertical line or accumulations in the shape of  lines and ellipses.
\begin{figure}[htbp]
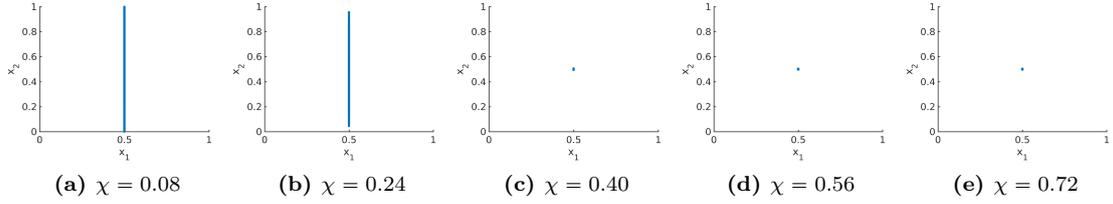

\foreach \x in {8}{%
    \subfloat[$\chi=0.0\x$]{\includegraphics[width=0.198\textwidth]
{Particle_Euler_periodic_ParameterForNiceForces_simultaneous_chi_nozoom00\x_N600_circleinit_dt02.png}}
}%
           \foreach \x in {24,40, ..., 85}{%
    \subfloat[$\chi=0.\x$]{\includegraphics[width=0.198\textwidth]
{Particle_Euler_periodic_ParameterForNiceForces_simultaneous_chi_nozoom0\x_N600_circleinit_dt02.png}}
}%
\caption{Comparison of the size of the stationary solution to the K\"{u}cken-Champod model \eqref{eq:particlemodel} for different values of $\chi$ where $N=600$ and the initial data is equiangular distributed on a circle with center $(0.5,0.5)$ and radius $0.005$ }\label{fig:numericalsolDependenceChiNozoom}
\end{figure}
The stationary patterns for  different values of $\chi$ are enlarged in Figure \ref{fig:numericalsolDependenceChi} by considering different axis scales. As $\chi$ increases the stationary pattern evolves from a straight line into a standing ellipse and finally into a ring for $\chi=1.0$. Since the same particle numbers and the same initial data, as well as the same parameters except for the parameter $\chi$ are considered in these simulations, the different stationary patterns strongly depend on the choice of $\chi$.  Note that the length of the minor axis of the ellipse increases as $\chi$ increases, while the length of the major axis of the ellipse gets shorter. Further note that we have a continuous transition of the stationary patterns as $\chi$ increases due to the smoothness of the forces and the continuous dependence of the forces on parameter $\chi$ in the K\"ucken-Champod model \eqref{eq:particlemodel}.

\begin{figure}[htbp]
\centering
           \foreach \x in {12,16, ..., 90}{%
    \subfloat[$\chi=0.\x$]{\includegraphics[width=0.2\textwidth]
{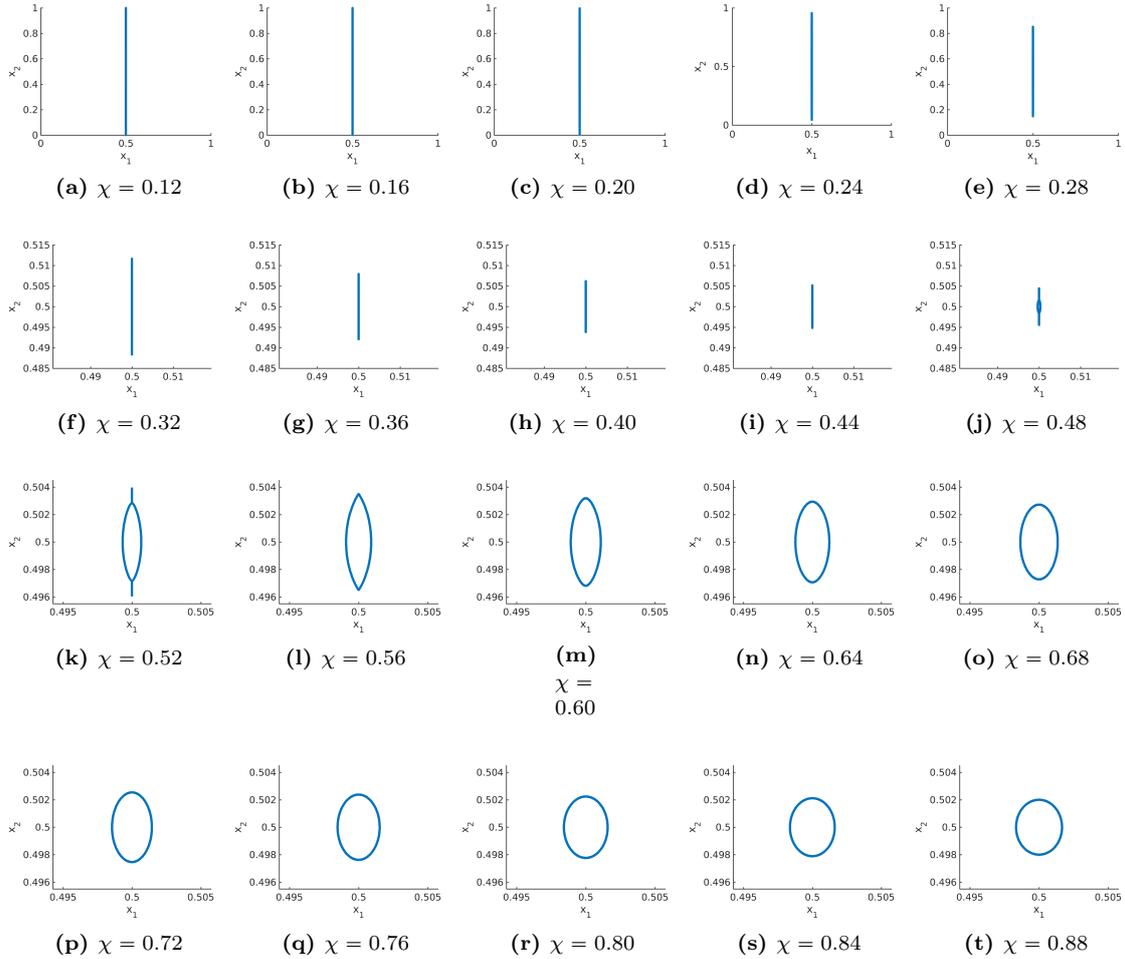}}\hfill
}%
\caption{Stationary solution to the K\"{u}cken-Champod model \eqref{eq:particlemodel} for different values of $\chi$ where $N=600$ and the initial data is equiangular distributed on a circle with center $(0.5,0.5)$ and radius $0.005$}\label{fig:numericalsolDependenceChi}
\end{figure}

\subsubsection{Dependence on parameter $e_R$}
In Figure \ref{fig:numericalsolDependenceEr} the stationary solution to \eqref{eq:particlemodel} for $N=1200$ and $\chi=0.2$ is shown for different values of $e_R$ where a ring of radius $0.005$ with center $(0.5,0.5)$ is chosen as initial data. One can clearly see that the size of accumulations increases for $e_R$ increasing due to strong long-range repulsion forces for smaller values of $e_R$. Besides, the stationary solution is spread over the entire domain for smaller values of $e_R$. The spreading of the solution  along the entire horizontal axis  can be explained by the fact that for smaller values of $e_R$ the total force along $l$, i.e. along the horizontal axis, is no longer short-range repulsive and long-range attractive, but short-range  repulsive,  medium-range  attractive and  long-range  repulsive and the long-range repulsion is the stronger the smaller the value of $e_R$. 
\begin{figure}[htbp]
\foreach \x in {40,50,...,80}{%
    \subfloat[$e_R=\x$]{\includegraphics[width=0.197\textwidth]
{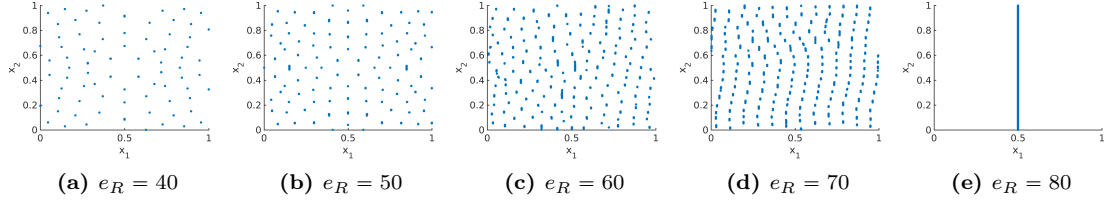}}
}%
\caption{Stationary solution to the K\"{u}cken-Champod model \eqref{eq:particlemodel} for $\chi=0.2$ and different values for $e_R$}\label{fig:numericalsolDependenceEr}
\end{figure} 

\subsubsection{Dependence on the size of the attraction force}
In this section, we assume that the total force is given by $F(d,T)=\delta F_A(d,T)+F_R(d)$ for $\delta\in [0,1]$ for the  spatially homogeneous tensor field $T=\chi s \otimes s+l\otimes l$ with $l=(1,0)$ and $s=(0,1)$  instead of \eqref{eq:totalforce}. We consider $N=600$ particles which are initially equiangular distributed on a circle with center $(0.5,0.5)$ and radius $0.005$ and we investigate the influence of the size of the attraction force $F_A$ on stationary patterns by varying its coefficients. While the force is repulsive for small values of $\delta$, resulting in a stationary solution spread over the entire domain, stripe patterns and ring patterns for $\chi=0.2$ and $\chi=1$, respectively, arise as stationary patterns as $\delta$ increases as shown in Figures \ref{fig:numericalsol_dependence_attraction}. Note that the radius of the stationary ring pattern decreases as $\delta$ increases due to an increasing attraction force.

\begin{figure}[htbp]
\centering
\foreach \x in {1,3,...,9}{%
    \subfloat[$\delta=0.\x$]{\includegraphics[width=0.2\textwidth]
{Particle_Euler_periodic_ParameterForNiceForces_simultaneous_chi02_Repulsion0\x_Attraction_N600_circleinit_dt02.png}}
           }%
           
\subfloat[$\delta=0.1$]{\includegraphics[width=0.2\textwidth]
{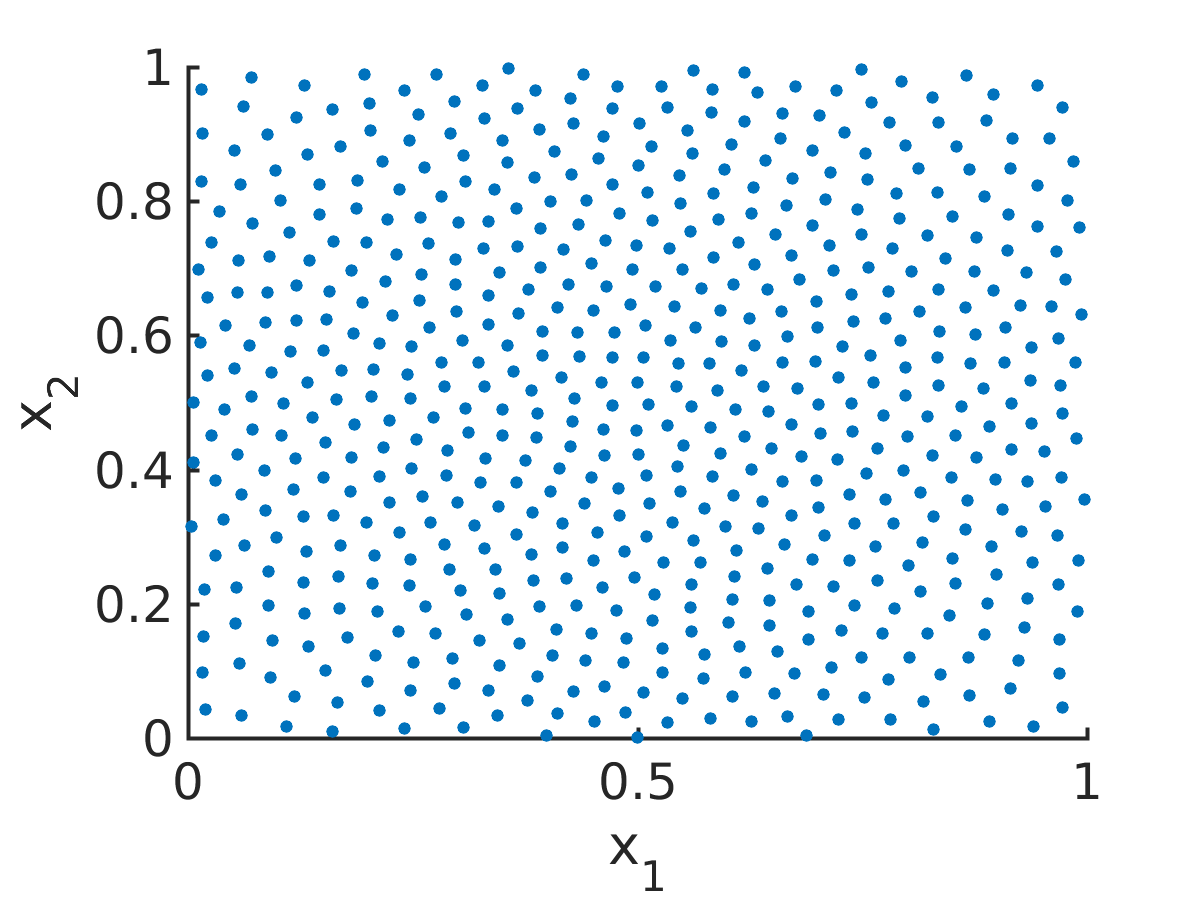}}
\foreach \x in {3,5,...,9}{%
    \subfloat[$\delta=0.\x$]{\includegraphics[width=0.2\textwidth]
{Particle_Euler_periodic_ParameterForNiceForces_simultaneous_chi1_Repulsion0\x_Attraction_N600_circleinit_dt02.png}}
           }%
\caption{Stationary solution to the K\"{u}cken-Champod model \eqref{eq:particlemodel} for  force $F(d,T)= \delta F_A(d,T)+ F_R(d)$  for different values of $\delta$ (i.e. different sizes of the attraction force $F_A$) where $\chi=0.2$ and $\chi=1$ (for different axis scalings) in the first and second row, respectively,  where $N=600$ and the initial data is equiangular distributed on a circle with center $(0.5,0.5)$ and radius $0.005$}\label{fig:numericalsol_dependence_attraction}
\end{figure}

\subsubsection{Dependence on the size of the repulsion force}

In this section, we consider a force of the form $F(d,T)= F_A(d,T)+\delta F_R(d)$ for $\delta\in [0,1]$ for the  spatially homogeneous tensor field $T=\chi s \otimes s+l\otimes l$ with $l=(1,0)$ and $s=(0,1)$ instead of \eqref{eq:totalforce} and we consider $N=600$ particles which are initially equiangular distributed on a circle with center $(0.5,0.5)$ and radius $0.005$. The stationary solution to \eqref{eq:particlemodel} for $\chi=0.2$ stretches along the vertical axis as $\delta$ increases due to an additional repulsive force as illustrated in Figure \ref{fig:numericalsol_dependence_repulsion_chi02}. For $\chi=1$, the radius of the ring pattern increases as $\delta$, see Figure~\ref{fig:numericalsol_dependence_repulsion_chi1}.

\begin{figure}[htbp]
\foreach \x in {1,3,...,9}{%
    \subfloat[$\delta=0.\x$]{\includegraphics[width=0.2\textwidth]
{Particle_Euler_periodic_ParameterForNiceForces_simultaneous_chi02_Attraction0\x_Repulsion_N600_circleinit_dt02_nozoom.png}}
           }%
\caption{Stationary solution to the K\"{u}cken-Champod model \eqref{eq:particlemodel} for $\chi=0.2$ and force $F(d,T)= F_A(d,T)+\delta F_R(d)$  for different values of $\delta$ (i.e. different sizes of the repulsion force $F_R$)  where $N=600$ and the initial data is equiangular distributed on a circle with center $(0.5,0.5)$ and radius $0.005$}\label{fig:numericalsol_dependence_repulsion_chi02}
\end{figure}

\begin{figure}[htbp]
\centering
\includegraphics[width=0.5\textwidth]{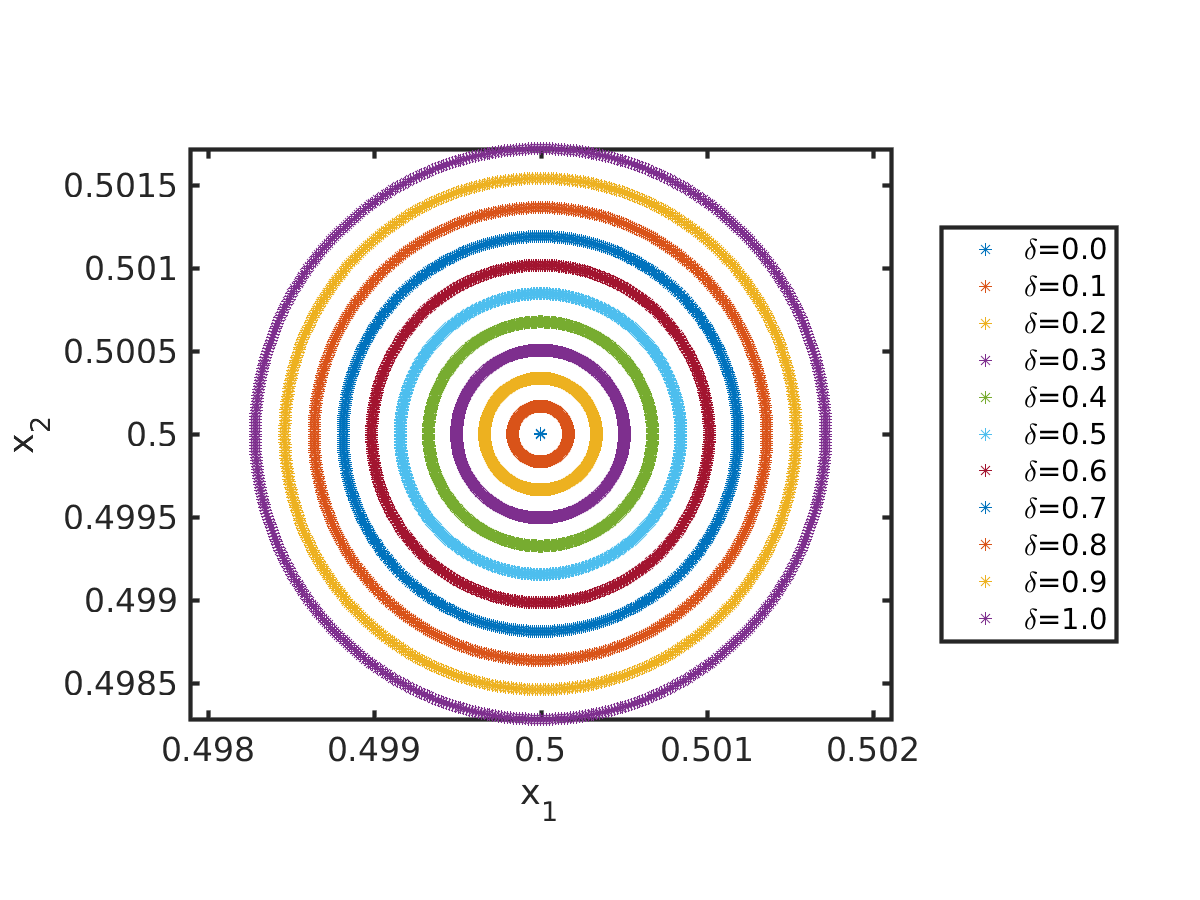}
\caption{Stationary solution to the K\"{u}cken-Champod model \eqref{eq:particlemodel} for $\chi=1$ and force $F(d,T)= F_A(d,T)+\delta F_R(d)$  for different values of $\delta$ (i.e. different sizes of the repulsion force $F_R$)  where $N=600$ and the initial data is equiangular distributed on a circle with center $(0.5,0.5)$ and radius $0.005$}\label{fig:numericalsol_dependence_repulsion_chi1}
\end{figure}

\subsubsection{Dependence on the tensor field}

In Figures \ref{fig:numericalsol_nonhomtensor} and \ref{fig:numericalsol_nonhomtensor_longtime} the numerical solution to the K\"{u}cken-Champod model \eqref{eq:particlemodel} for $N=600$, $\chi=0.2$ and  randomly uniformly distributed data  is shown for different non-homogeneous tensor fields $T=T(x)$ and different times $t$. Since $s=s(x)$ and $l=l(x)$ are assumed to be orthonormal vectors, the vector field $s=s(x)$ and the parameter $\chi$ determine the tensor field $T=T(x)$. One can clearly see in Figure \ref{fig:numericalsol_nonhomtensor} that the particles are aligned along the lines of smallest stress $s=s(x)$. However, these patterns are no equilibria. The evolution of the numerical solution for different tensor fields is illustrated in Figure \ref{fig:numericalsol_nonhomtensor_longtime}.
\begin{figure}[htbp]
    \centering
    \subfloat[Example 1]{
        \includegraphics[width=0.235\textwidth]{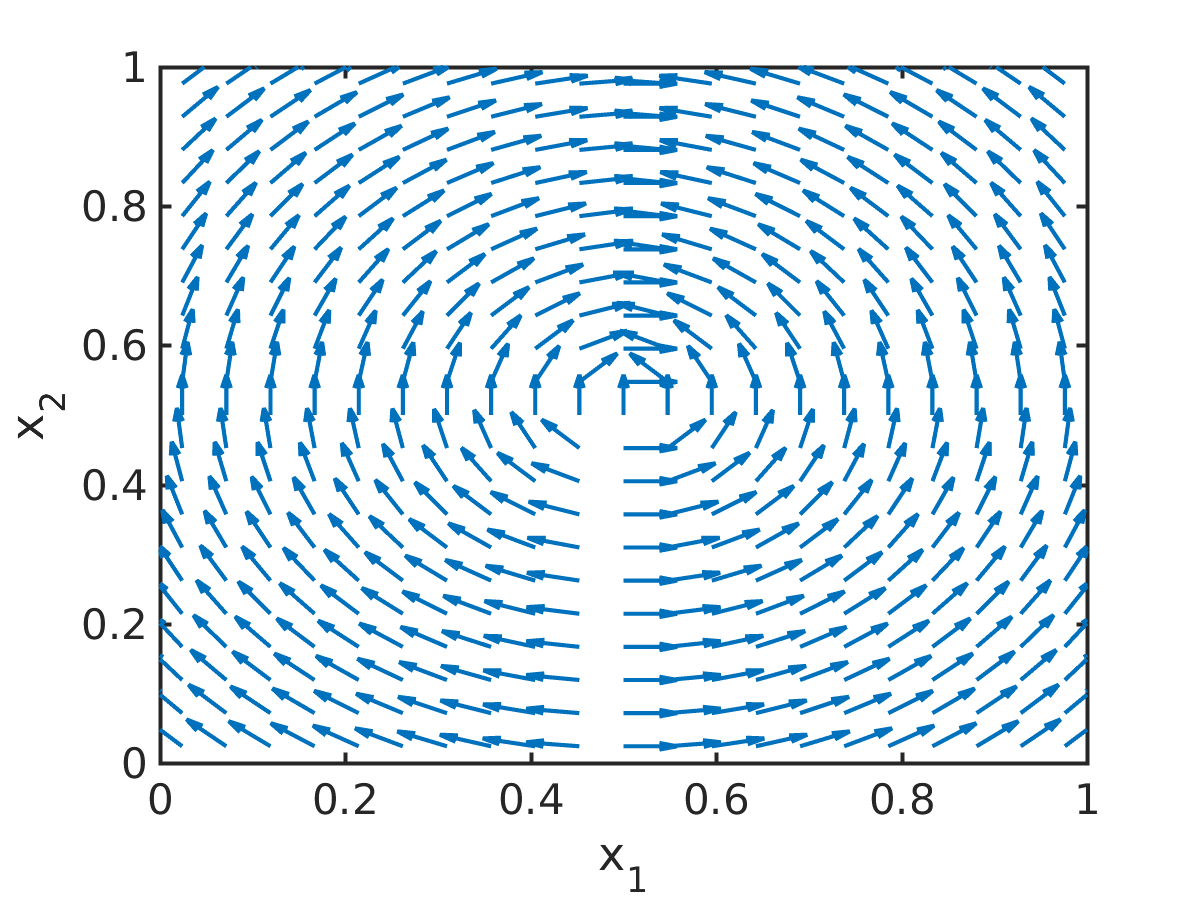}
        \includegraphics[width=0.235\textwidth]{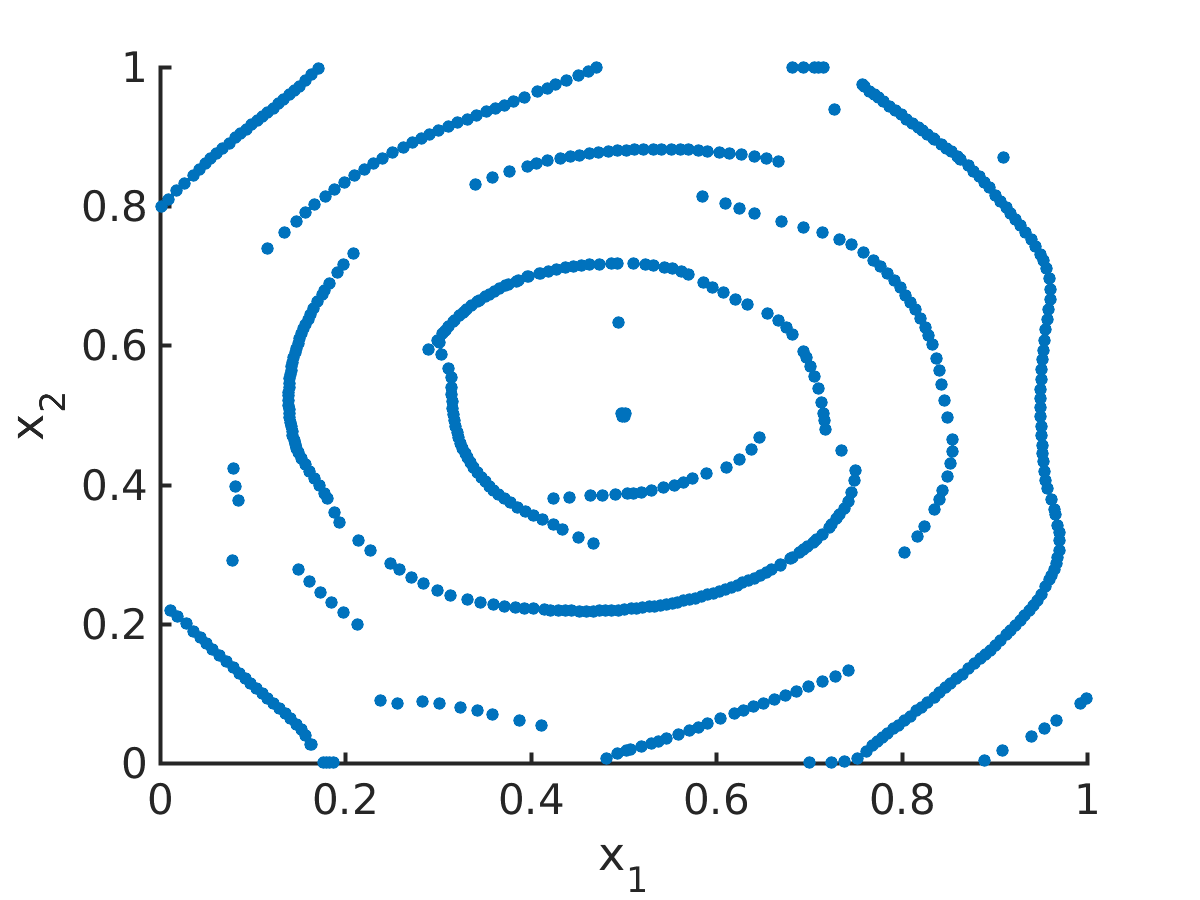}}
\subfloat[Example 2]{    \includegraphics[width=0.235\textwidth]{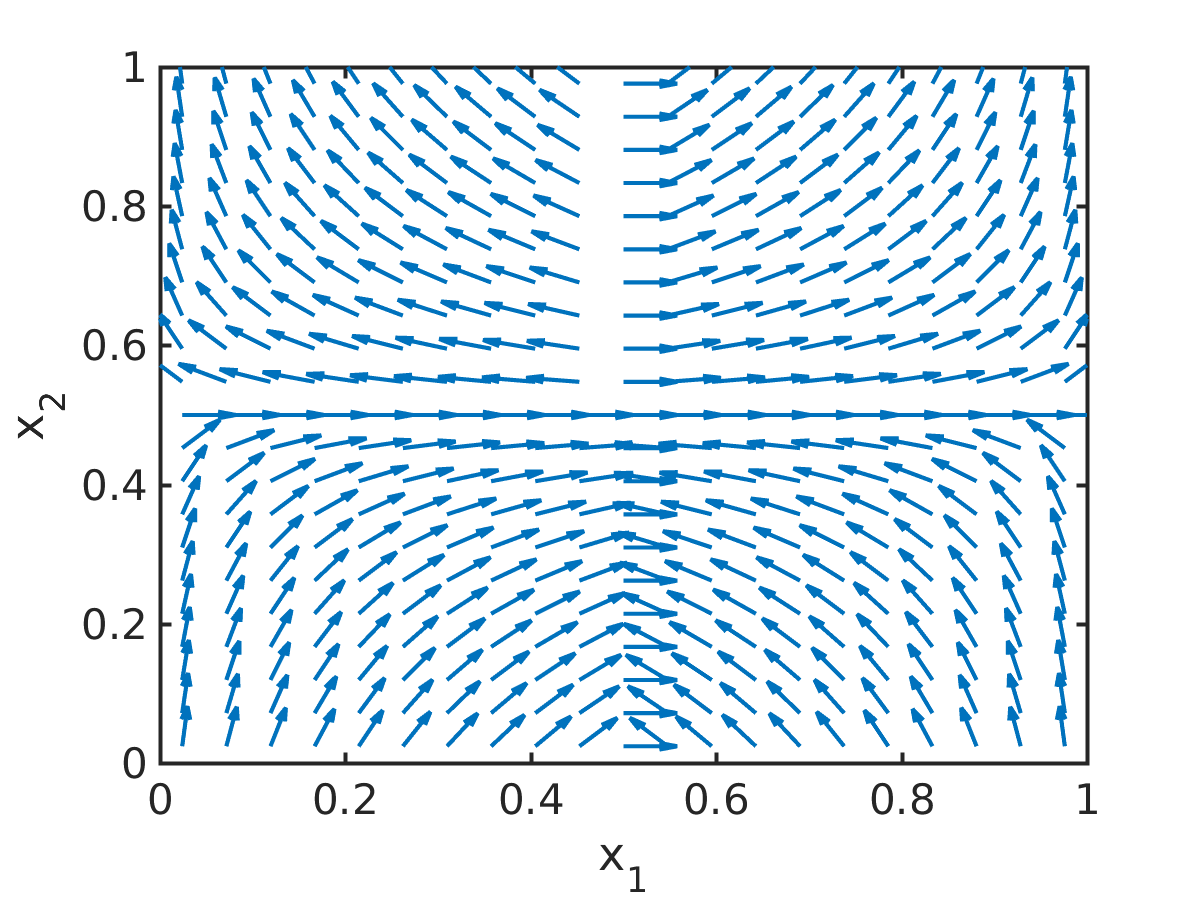}
        \includegraphics[width=0.235\textwidth]{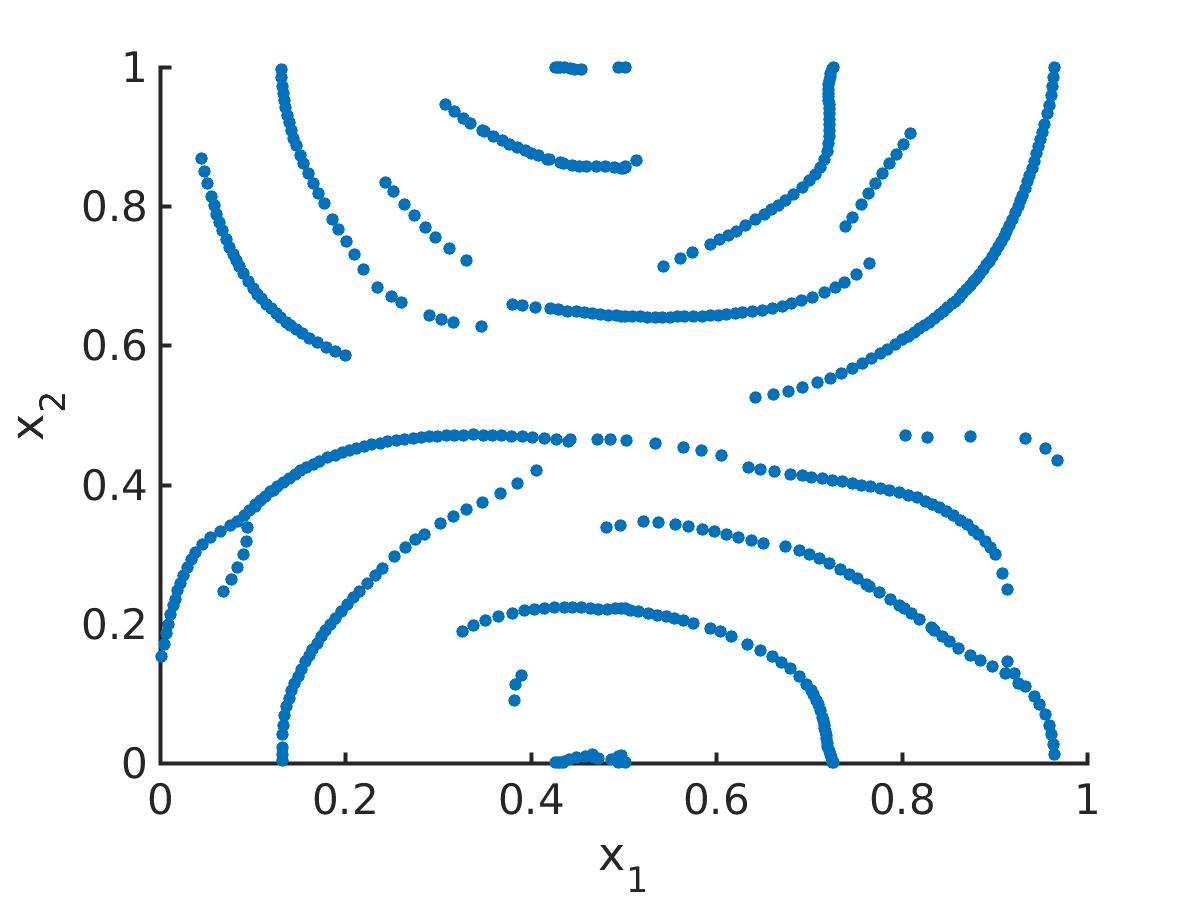} }
        
  \centering      
\subfloat[Example 3]{     \includegraphics[width=0.235\textwidth]{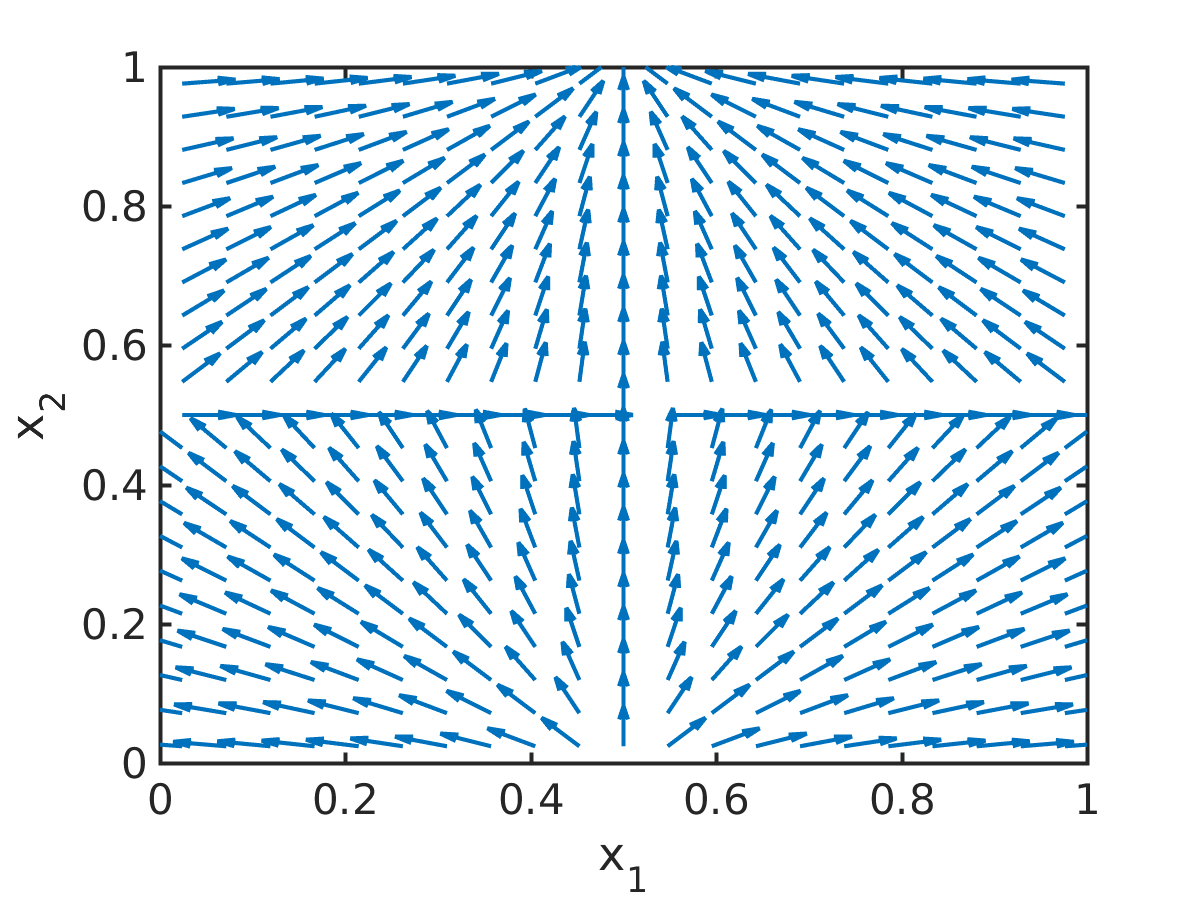}
        \includegraphics[width=0.235\textwidth]{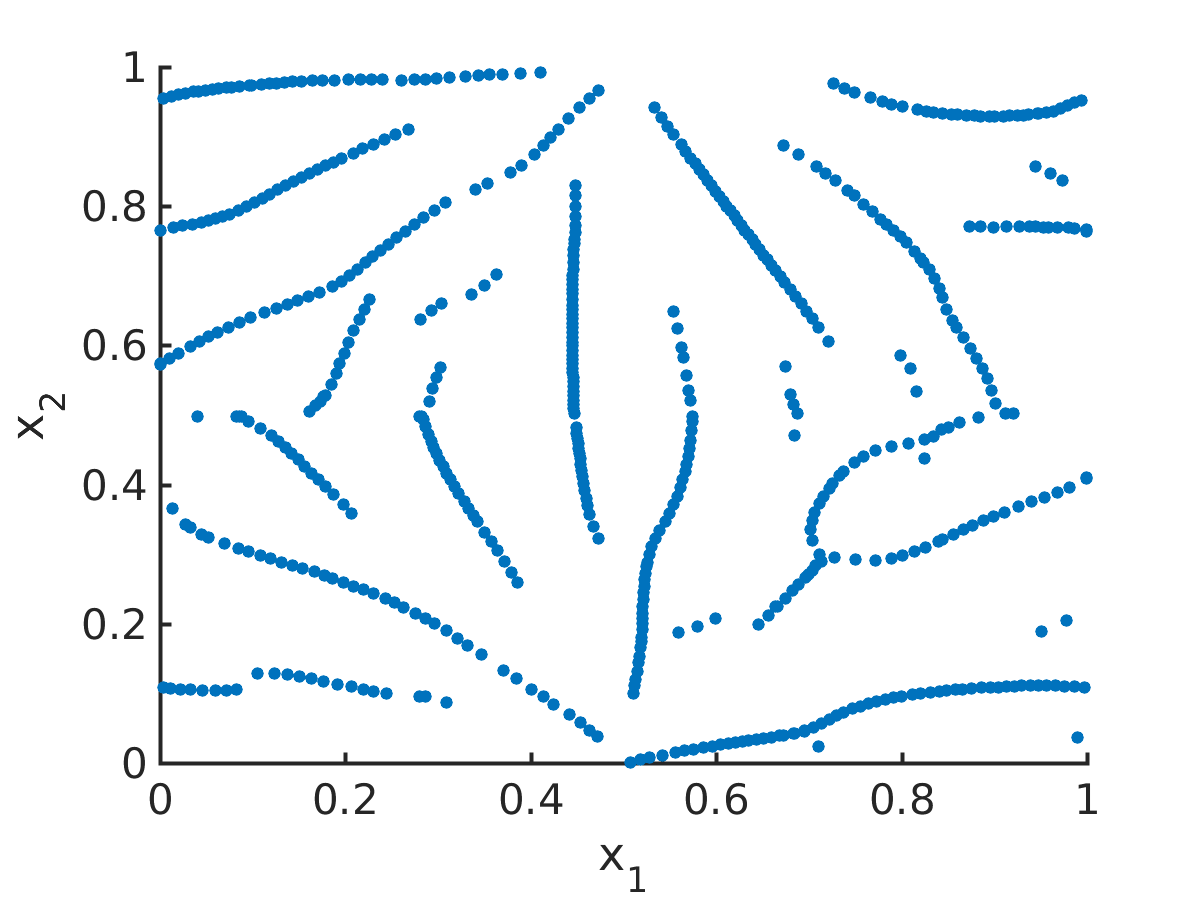}}
    \subfloat[Example 4]{     \includegraphics[width=0.235\textwidth]{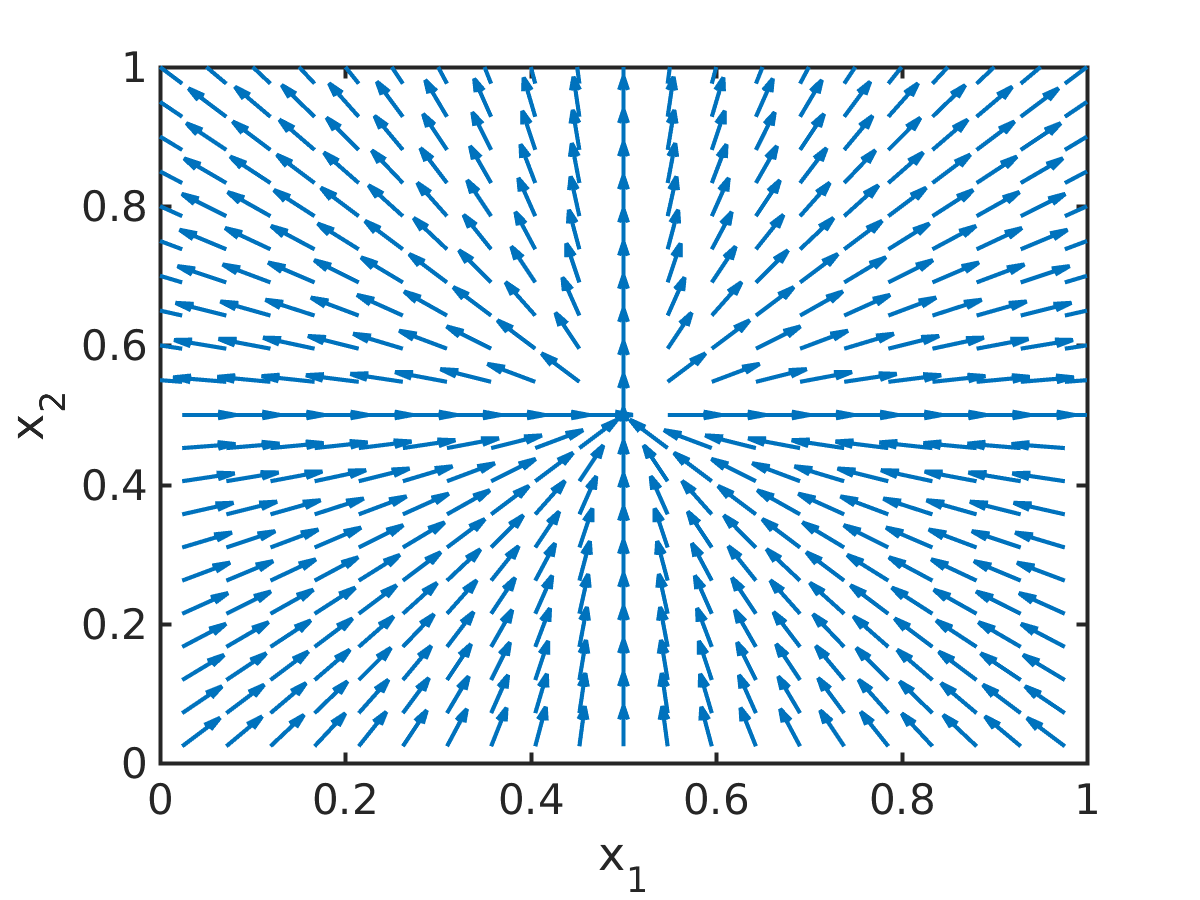}
        \includegraphics[width=0.235\textwidth]{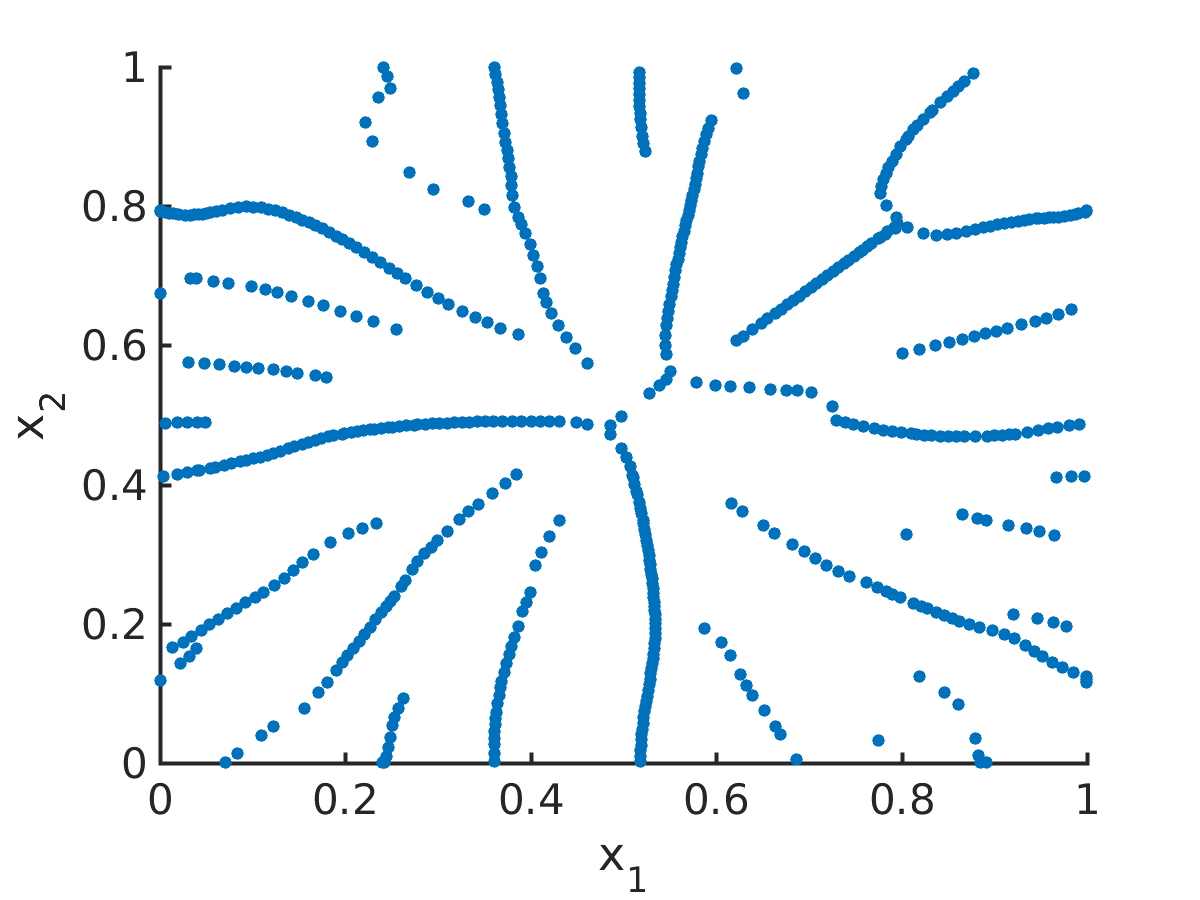}}
    \caption{Different non-homogeneous tensor fields $T=T(x)$ given by $s=s(x)$ (left) and the numerical solution to the K\"{u}cken-Champod model \eqref{eq:particlemodel}  at time $t=40000$  for  $\chi=0.2$,    $T=T(x)$ and randomly uniformly distributed initial data (right)}\label{fig:numericalsol_nonhomtensor}
\end{figure} 

\begin{figure}[htbp]
    \centering
\begin{minipage}{0.99\textwidth}
\centering
\subfloat[$s=s(x)$]{        \includegraphics[width=0.24\textwidth]{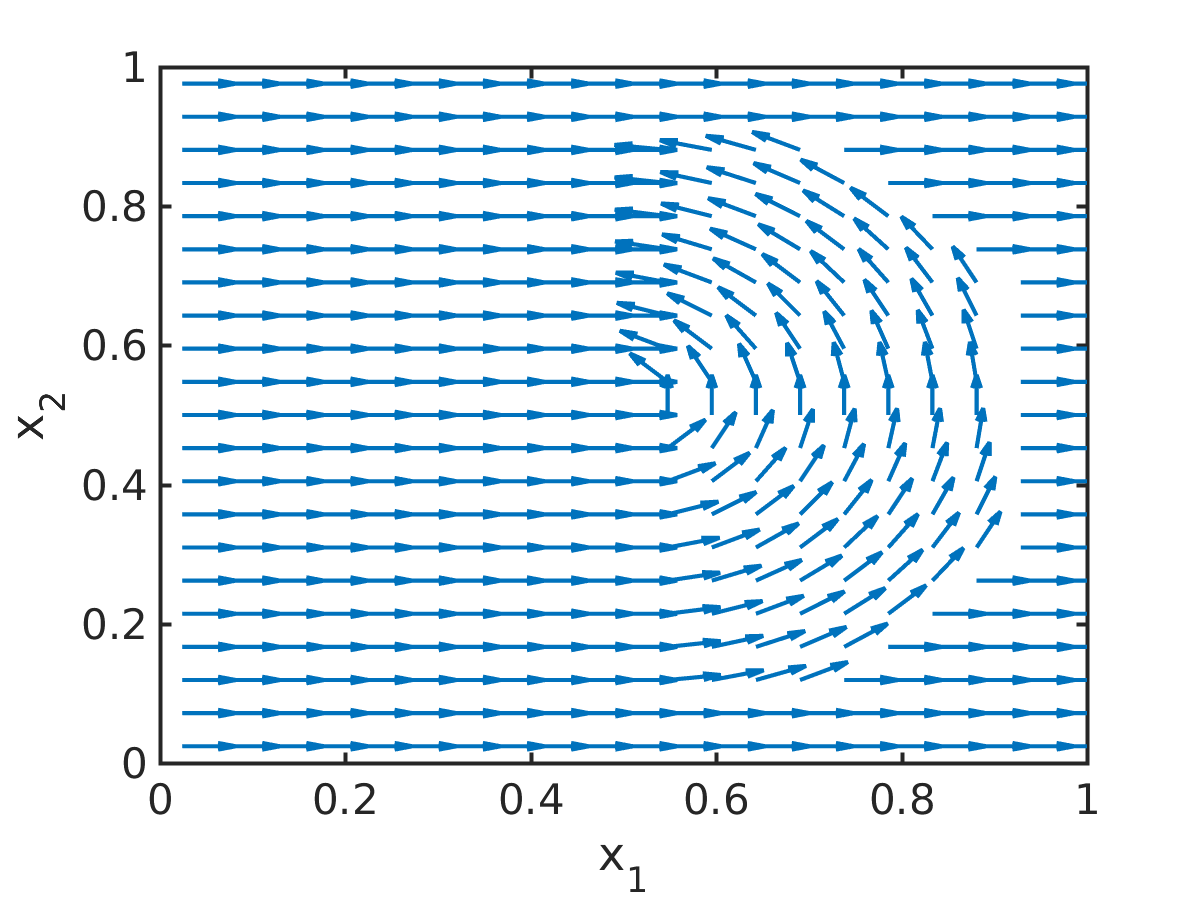}}
\subfloat[$t=40000$]{        \includegraphics[width=0.24\textwidth]{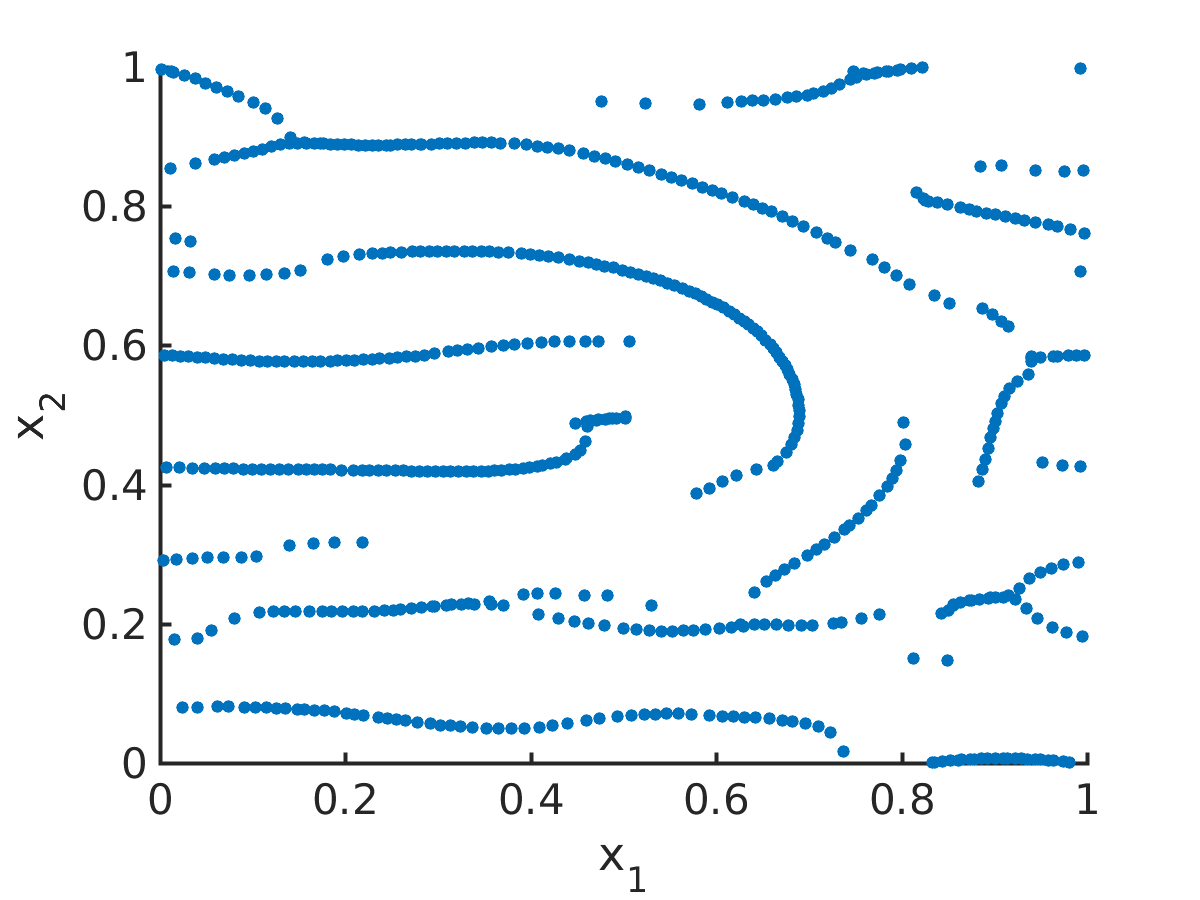}}
\subfloat[$t=200000$]{    \includegraphics[width=0.24\textwidth]{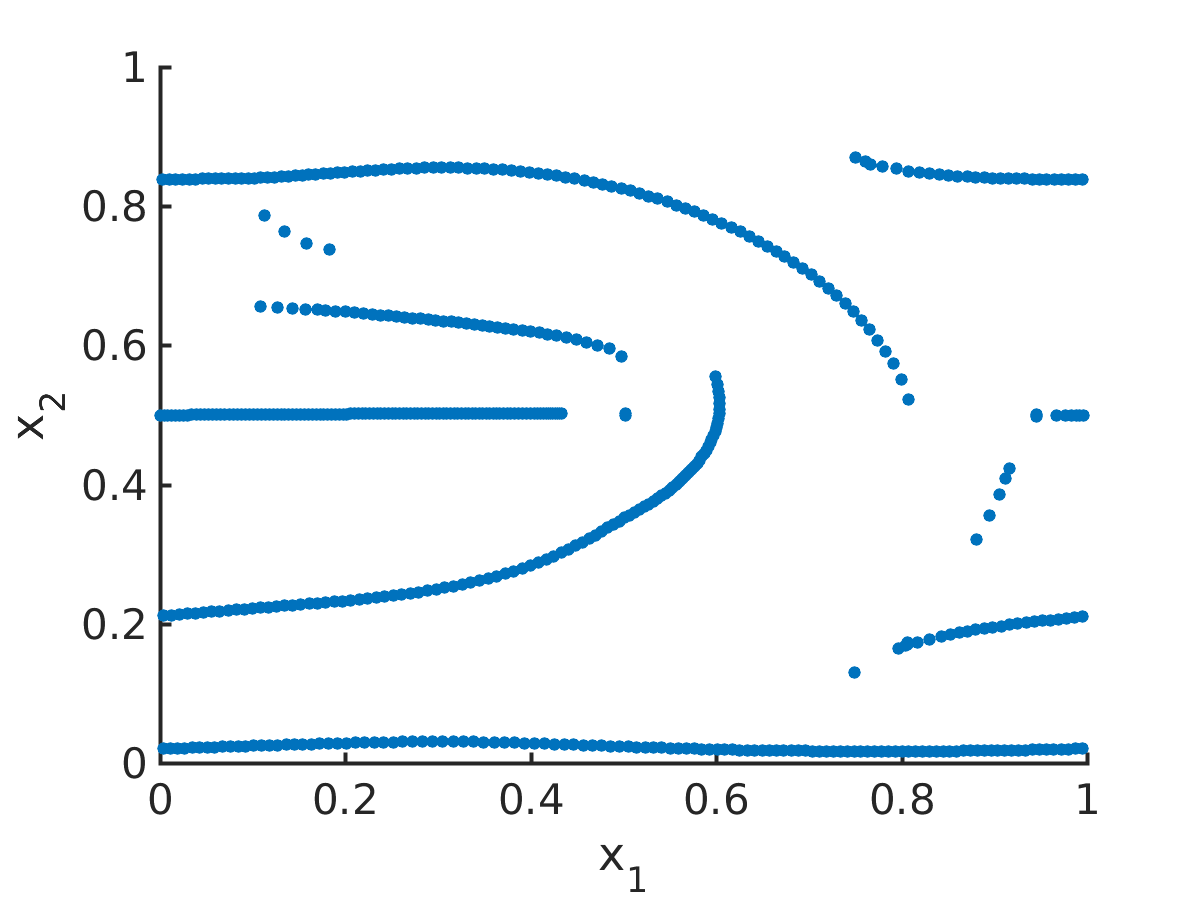}}
\subfloat[$t=400000$]{    \includegraphics[width=0.24\textwidth]{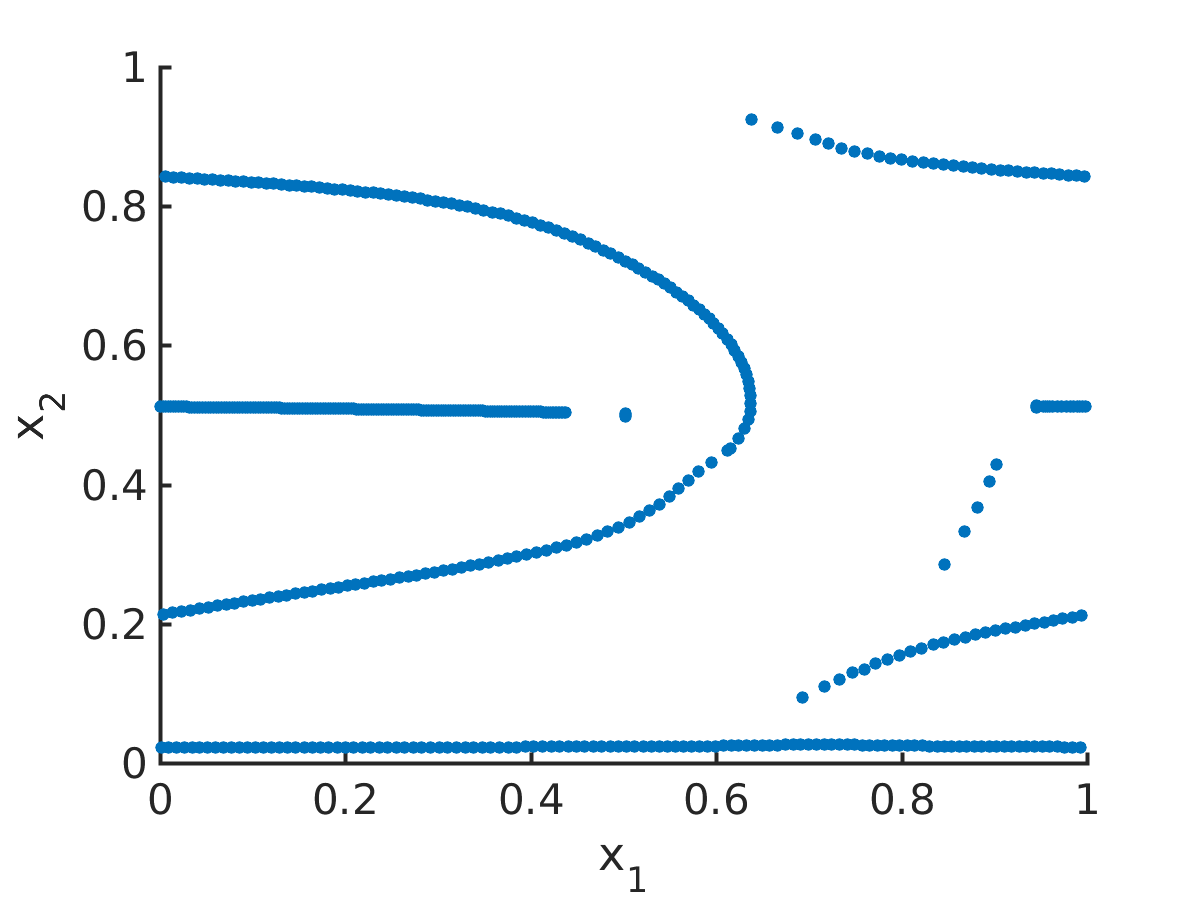}}
        \vspace{3mm}
        Example 5
        \end{minipage}
   \centering     
        \begin{minipage}{0.99\textwidth}
        \centering
    \subfloat[$s=s(x)$]{        \includegraphics[width=0.24\textwidth]{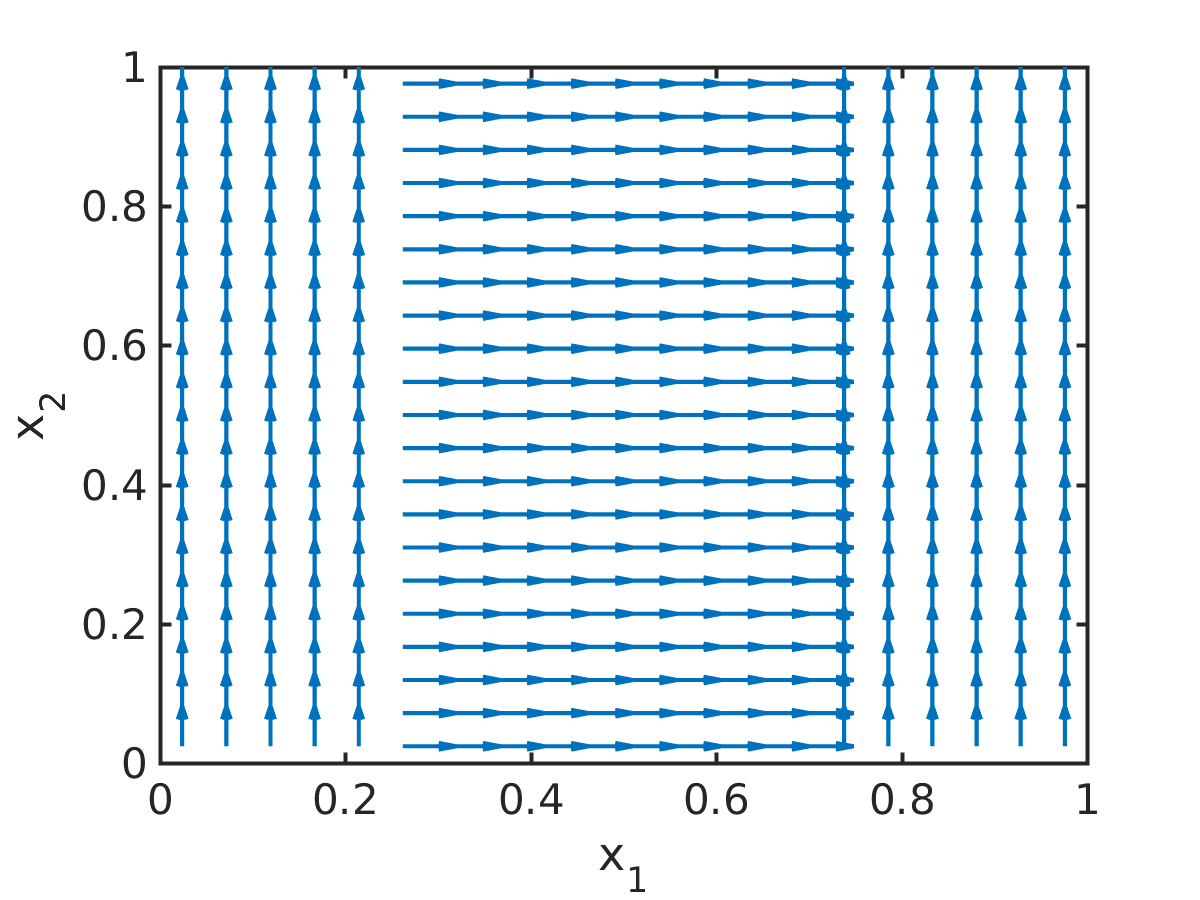}}
        \subfloat[$t=40000$]{       \includegraphics[width=0.24\textwidth]{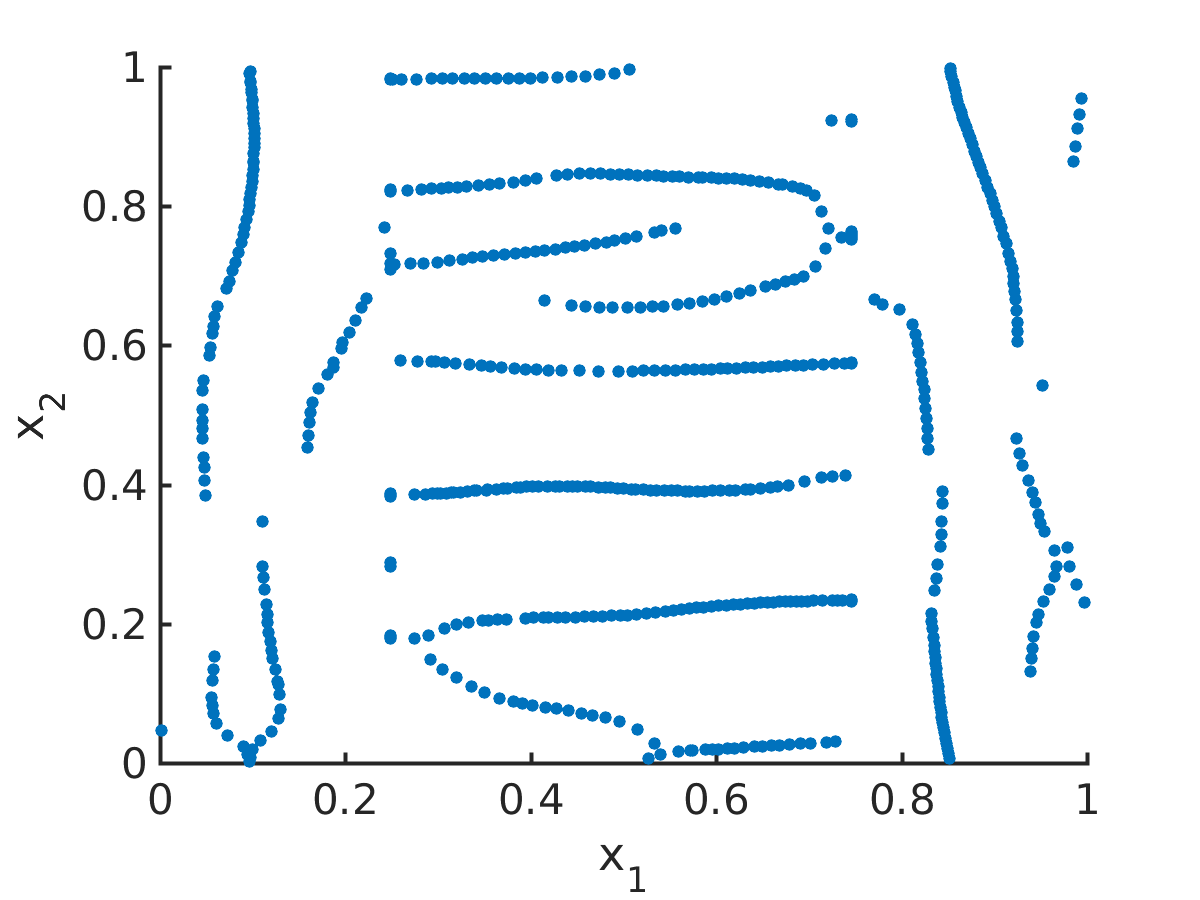}}
        \subfloat[$t=200000$]{        \includegraphics[width=0.24\textwidth]{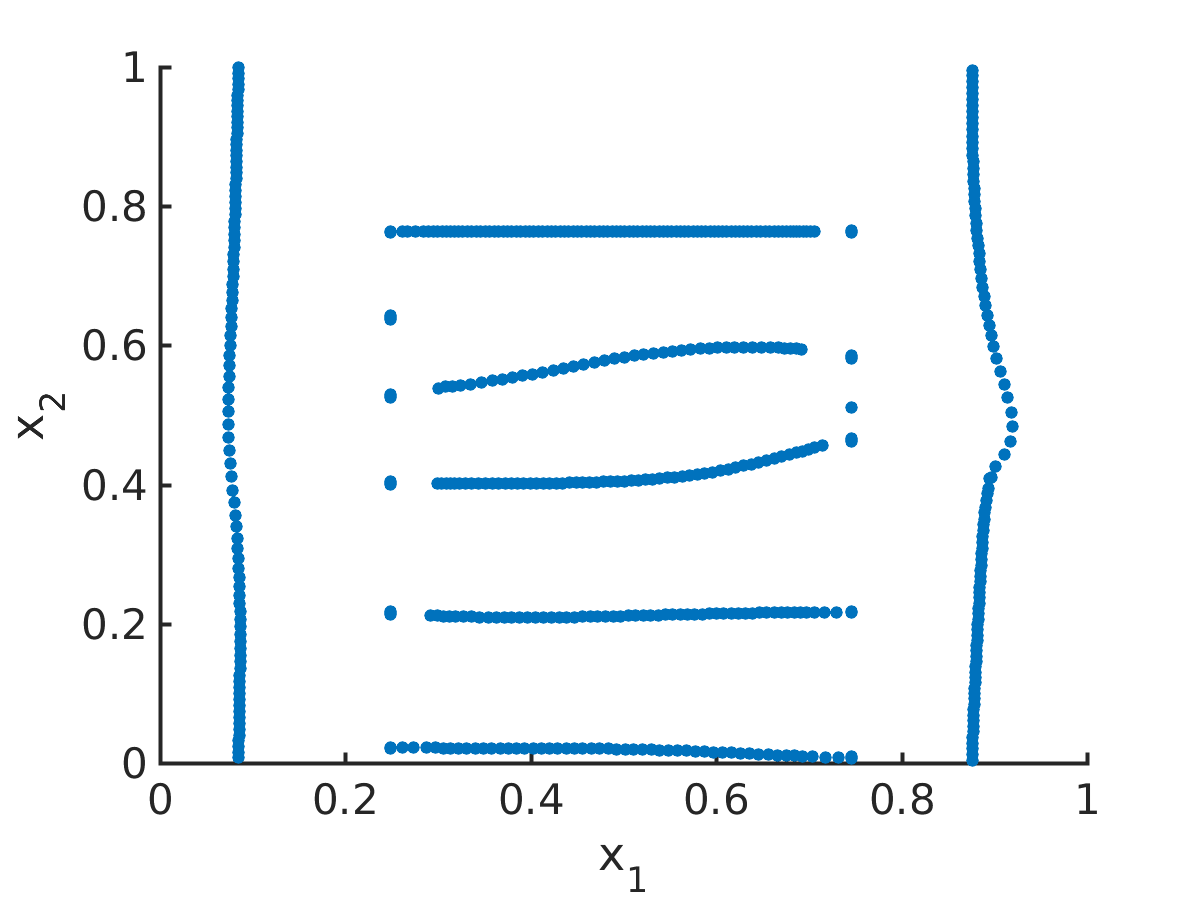}}
        \subfloat[$t=400000$]{     \includegraphics[width=0.24\textwidth]{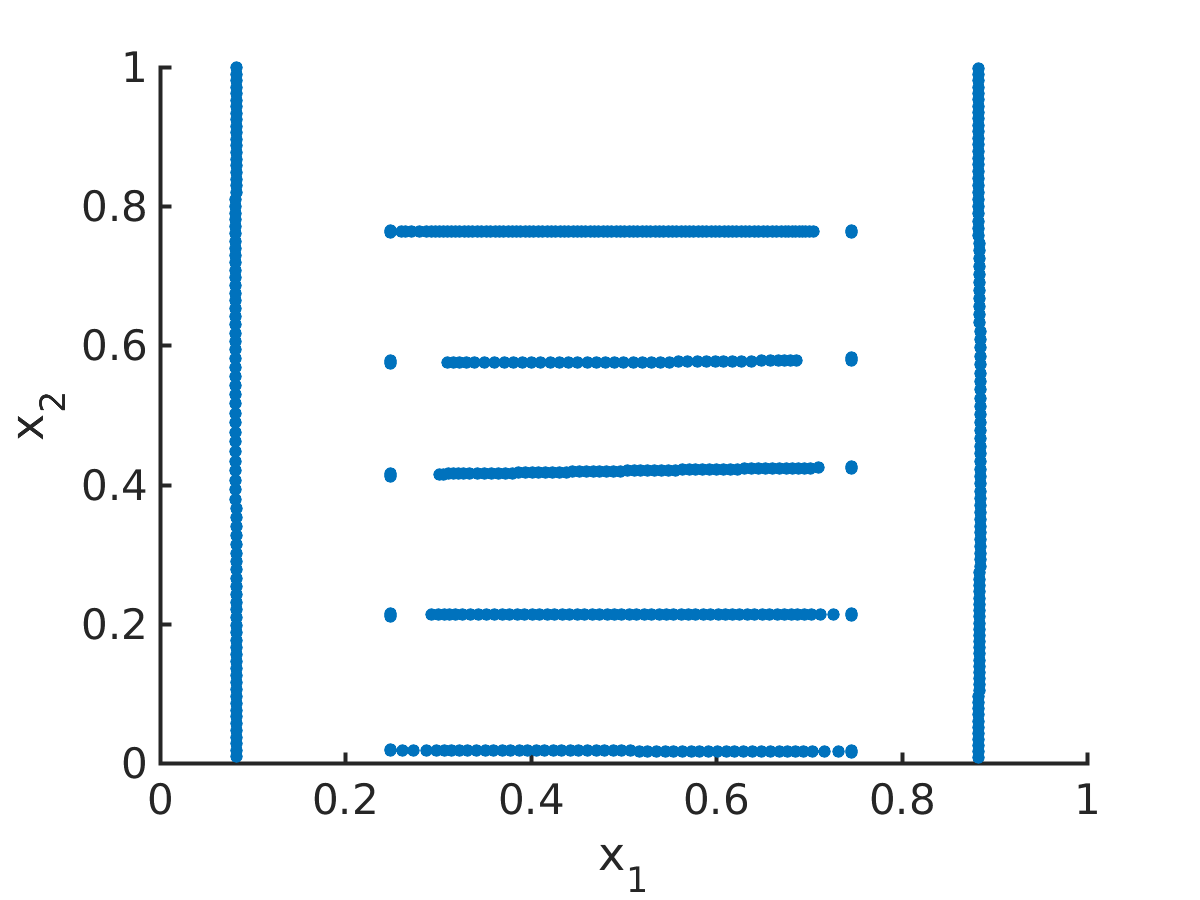}}
\vspace{3mm}       
        Example 6  
        \end{minipage}

    \caption{Different non-homogeneous tensor fields $T=T(x)$ given by $s=s(x)$ (left) and the numerical solution to the K\"{u}cken-Champod model \eqref{eq:particlemodel}  at different times $t$ for $\chi=0.2$,    $T=T(x)$, $N=600$ and randomly uniformly distributed initial data (right)}\label{fig:numericalsol_nonhomtensor_longtime}
\end{figure}

\subsection{Discussion of the numerical results}
In this section, we study the existence of equilibria and their stability of the K\"{u}cken-Champod model \eqref{eq:particlemodel} for  the spatially homogeneous tensor field $T=\chi s \otimes s+l\otimes l$ with $l=(1,0)$ and $s=(0,1)$  and compare them with the numerical results.

\subsubsection{Ellipse}
As outlined  in Section \ref{sec:interpretationforce} the anisotropic forces for $\chi\in [0,1)$ lead to  an additional advection along the vertical axis compared to the horizontal axis  for the given tensor field $T$. Hence, possible stationary ellipse patterns are stretched  along the vertical axis for $\chi\in [0,1)$. Besides, this advection  leads to accumulations within the ellipse pattern, i.e. the distances of the particles are much longer along the vertical lines (e.g. at the left or right side of the ellipse) than along the horizontal lines (e.g. at the top or bottom of the ellipse). As in Section \ref{sec:interpretationsteadystates} we denote the length of the minor and major axis of the ellipse state by $R$ and $R+r$, respectively. 

First, we consider ring patterns of radius $R>0$. We identify $\R^2$ with $\C$ and consider the ansatz 
\begin{align}\label{eq:ringparticleansatz}
\bar{x}_k=\bar{x}_k(R)=x_c+R\exp\bl \frac{2\pi i k}{N}\br,\quad k=0,\ldots,N-1
\end{align}
with center of mass $x_c$, i.e. the particles are uniformly distributed on a ring of radius $R$ with center $x_c$. The radius $R>0$ has to be determined such that the ansatz functions $\bar{x}_j=\bar{x}_j(R)$ satisfy
\begin{align}\label{eq:ringstabilityequality}
\sum_{\substack{k=1\\k\neq j}}^N F(\bar{x}_k(R)-\bar{x}_j(R),T)=0
\end{align}
for all $j=0,\ldots,N-1$. Denoting the  left-hand side of \eqref{eq:ringstabilityequality}   by $G_j(R)$, then $G_j(R)$ is highly nonlinear and zeros of $G_j$ can only be determined  numerically.   By symmetry it is sufficient to determine the zeros of $G_0$ for $\chi=1$. Since  $\Im G_0(R)=0$ for all $R>0$  by the definition of $F$ the condition simplifies to finding $R>0$  such that $\Re G_0(R)=0$.
Using Newton's algorithm the unique nontrivial zero of $\Re G_0$ can be computed as $\bar{R}\approx 0.0017$  for the forces \eqref{eq:repulsionforcemodel} and \eqref{eq:attractionforcemodel}  in the K\"{u}cken-Champod model \eqref{eq:particlemodel} with parameter values from \eqref{eq:parametervaluesRepulsionAttraction}, $N=600$ and a fixed center of mass $x_c$. Hence, given $x_c$ \eqref{eq:ringparticleansatz} with radius   $\bar{R}$ is the unique ring equilibrium for $\chi=1$ and $\bar{R}$ coincides with the radius of the numerically obtained ring equilibrium in Section \ref{sec:numericsevolution}. Based on a linearized stability analysis \cite{Turing} one can show numerically   that the ring pattern is stable for $\chi=1$ for the forces in  the K\"{u}cken-Champod model for parameters in \eqref{eq:parametervaluesRepulsionAttraction} and $N=1200$. Since $\Re G_j$ is independent of $\chi$ with unique zero $\bar{R}$ and $\chi f_A\leq 0$, this implies that there exists no $R>0$ such that $\Im G_j(\bar{R})= 0$ for all $j=0,\ldots,N$ for any $\chi\in[0,1)$, i.e. the ring solution \eqref{eq:ringparticleansatz} is no equilibrium for $\chi\in[0,1)$ and any $R>0$. This  is consistent with the analysis of the mean-field PDE \eqref{eq:macroscopiceq} in Section \ref{sec:analysis} and with the numerical results in Section \ref{sec:numericalresults}.

For the  general case of an ellipse where $r\geq 0$ we identify $\R^2$ with $\C$ and regard the equiangular ansatz 
\begin{align}\label{eq:ellipseparticleansatz}
\bar{x}_k=\bar{x}_k(r,R)=x_c+R\cos\bl \frac{2\pi k}{N}\br+i(R+r)\sin\bl \frac{2\pi k}{N}\br,\quad k=0,\ldots,N-1, 
\end{align}
where the distances of the particles are longer along vertical than along horizontal lines. 
An ellipse equilibrium has to satisfy \begin{align}\label{eq:ellipsestabilityequality}
\sum_{\substack{k=1\\k\neq j}}^N F(\bar{x}_k(R,r)-\bar{x}_j(R,r),T)=0
\end{align}
for all $j=0,\ldots,N-1$. Tuples $(R,r)$ such that \eqref{eq:ellipseparticleansatz} is a possible equilibria to \eqref{eq:particlemodel} can be determined numerically from $\Re G_0(R,r)=0$, where $G_j((R,r))$ for $j\in\{0,\ldots,N-1\}$ denotes the left-hand side of \eqref{eq:ellipsestabilityequality}. For the force coefficients \eqref{eq:repulsionforcemodel} and \eqref{eq:attractionforcemodel} in the K\"{u}cken-Champod model for parameter values \eqref{eq:parametervaluesRepulsionAttraction} and $N=600$, the condition in \eqref{eq:ellipsestabilityequality} implies that the larger $r$ the smaller $R$, i.e. as $r$ increases the ring of radius $R$ evolves into an ellipse whose major axis of length $2(R+r)$ gets longer and whose minor axis of length $2R$ gets shorter as $r$ increases. The numerically obtained tuples $(R,r)$ are shown  in Figure \ref{fig:rvsR}. Besides, it follows from plugging the definition of the total force for spatially homogeneous tensor fields  into \eqref{eq:ellipsestabilityequality} that each tuple $(R,r)$ can be associated to an equilibrium for at most one value of $\chi$. Further note that by Section \ref{sec:interpretationforce} the additional advection along the vertical axis is the stronger the smaller the value of $\chi$, implying that $r$ increases as $\chi$ decreases. Hence, we can conclude that for a given value of $\chi$ there exists at most one tuple $(R,r)$ such that the ansatz \eqref{eq:ellipseparticleansatz} is an equilibrium to \eqref{eq:particlemodel}. This can also be justified by evaluating $\Im G_{N/4}(R,r)$ as a function of radius pairs $(R,r)$ for fixed values of $\chi$ for $N=600$ particles. The eccentricity $e=\sqrt{1-(R/(R+r))^2}$ of the stationary ellipse pattern as a function of the parameter $\chi$ is shown in Figure \ref{fig:eccentricity}. Note that these observations are consistent with the numerical results in Section \ref{sec:numericalresults}. Further note that the shape of the relation between $R$ and $r$ as well as the eccentricity curve in Figures \ref{fig:rvsR} and \ref{fig:eccentricity} is similar to the ones in the continuous case, shown in Figures \ref{fig:rvsRcontinuous} and \ref{fig:eccentricitycontinuous}. However, there are small differences between the radius pairs for the discrete and the continuous case which is due to the additional functional determinant that has to be considered if the corresponding integrals in \eqref{eq:condEllipsesteadystate1} and \eqref{eq:condEllipsesteadystate2} are discretized.
\begin{figure}[htbp]
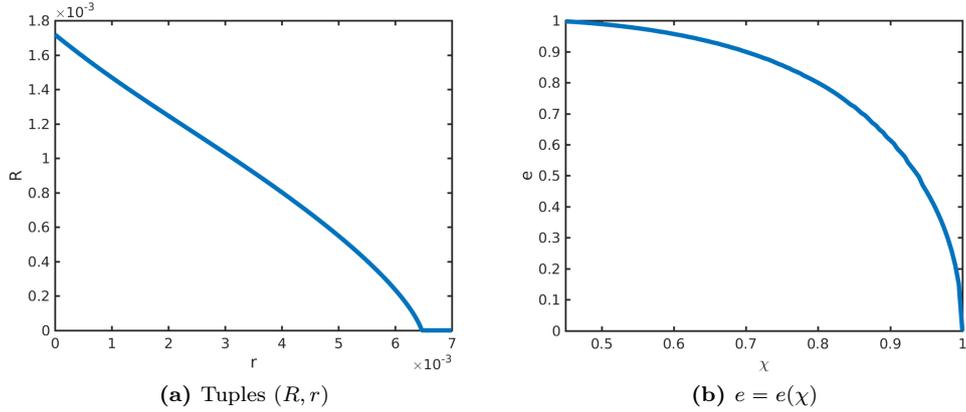

    \centering
    \subfloat[Tuples $(R,r)$ ]{\includegraphics[width=0.45\textwidth]{rvsR.png}\label{fig:rvsR}}
\subfloat[$e=e(\chi)$] {\includegraphics[width=0.45\textwidth]{eccentricity.png}\label{fig:eccentricity}}
    \caption{Tuples $(R,r)$ for stationary ellipse patterns to \eqref{eq:particlemodel} with ansatz \eqref{eq:ellipseparticleansatz} and eccentricity $e$ as a function of $\chi$ for $N=600$ and the forces in the K\"ucken-Champod model for parameter values in \eqref{eq:parametervaluesRepulsionAttraction} }
\end{figure}

\subsubsection{Single straight vertical line}\label{sec:singlestraightvericalstationary}
Because of  the observations in Section \ref{sec:interpretationforce} a natural choice for line patterns are  vertical lines. Identifying $\R^2$ with $\C$ results in the ansatz
\begin{align}\label{eq:straightlineansatz}
\bar{x}_k=x_c+i\frac{2k-1}{2N},\quad k=0,\ldots,N-1, 
\end{align}
for a single straight vertical line with the center of mass $x_c$. One can easily see that ansatz \eqref{eq:straightlineansatz} defines an equilibrium of \eqref{eq:particlemodel} for all values $\chi\in [0,1]$ where the minimum image criterion  is crucial to guarantee that \eqref{eq:straightlineansatz} is an equilibrium for even values of $N$. Based on a linearized stability analysis \cite{Turing} one can show that \eqref{eq:straightlineansatz}  is a stable equilibrium of \eqref{eq:particlemodel} for $N=1200$ for $\chi\in[0,0.27]$ which is consistent with the numerical results in Section \ref{sec:numericalresults}.

\section*{Acknowledgments}
MB acknowledges support by ERC via Grant EU FP 7 - ERC Consolidator Grant 615216 LifeInverse and by the German Science Foundation DFG via EXC 1003 Cells in Motion Cluster of Excellence, M\"{u}nster, Germany. BD has been supported by the Leverhulme Trust research project grant `Novel discretizations for higher-order nonlinear PDE' (RPG-2015-69). LMK was supported by the UK Engineering and Physical Sciences Research Council (EPSRC) grant
EP/L016516/1. CBS acknowledges support from Leverhulme Trust project on Breaking the non-convexity barrier, EPSRC grant Nr. EP/M00483X/1, the EPSRC Centre Nr. EP/N014588/1 and the Cantab Capital Institute for the Mathematics of Information. 
The authors would like to thank Carsten Gottschlich and Stephan Huckemann for introducing them to the K\"{u}cken-Champod model and for very useful discussions on the dynamics required for simulating fingerprints.

\appendix

\section{Detailed computations of Section \ref{sec:interpretationvectorfield}}\label{sec:appendix_computation}

Let $\tilde{T}=\chi \tilde{s} \otimes \tilde{s}+\tilde{l}\otimes \tilde{l}$ denote a spatially homogeneous tensor field for  orthonormal vectors $\tilde{l}, \tilde{s}\in \R^2$. Given  $l=(1,0)$, $s=(0,1)$ and angle of rotation $\theta$ in \eqref{eq:anglerotationproof}, then $\tilde{T}=R_{\theta}T R_{\theta}^T$ with  $T=\chi s \otimes s+l\otimes l$ and rotation matrix $R_{\theta}$ in \eqref{eq:anglerotationproof}. 

Let $x_j=x_j(t),~j=1,\ldots,N$, denote the solution to the microscopic model \eqref{eq:particlemodel} on $\R^2$ for the tensor field $T$ and define
\begin{align*}
\tilde{x}_j(t)=x_c+R_{\theta}(x_j(t)-x_c),\quad j=1,\ldots,N
\end{align*}
where $x_c$ denotes the center of mass. Then, $\tilde{x}_j=\tilde{x}_j(t),j=1,\ldots,N$, is a solution to the microscopic model \eqref{eq:particlemodel} on $\R^2$ for the tensor field $\tilde{T}$. Besides, given   an equilibrium $\bar{x}_j,~j=1,\ldots,N$, to \eqref{eq:particlemodel} on $\R^2$ for the tensor field $T$, then
\begin{align*}
\bar{\tilde{x}}_j=x_c+R_{\theta}(\bar{x}_j-x_c),\quad j=1,\ldots,N,
\end{align*}
is an equilibrium  to \eqref{eq:particlemodel} on $\R^2$ for the tensor field $\tilde{T}$.

We  show that $\tilde{x}_j,~j=1,\ldots,N$, solves \eqref{eq:particlemodel} for the tensor field $\tilde{T}$. 
Since $x_j,~j=1,\ldots,N,$ solves \eqref{eq:particlemodel} for the tensor field $T$, we have 
\begin{align*}
\frac{\di x_j}{\di t}=\sum_{k\neq j} f_A(|d|)\left[\chi \bl s\cdot d\br s+\bl l\cdot d\br l\right]+f_R(|d|)d
\end{align*}
for all $j=1,\ldots,N$  where $d(x_j,x_k)=x_j-x_k$.
 Note that $\tilde{x}_j-\tilde{x}_k=R_{\theta}(x_j-x_k)$ and $|\tilde{x}_j-\tilde{x}_k|=|x_j-x_k|$. Using \eqref{eq:proofvectortransform} as well as the fact that $R_{\theta}$ is an orthogonal matrix we get
 \begin{align*}
 \chi \bl \tilde{s}\cdot (\tilde{x}_j-\tilde{x}_k)\br \tilde{s}+\bl \tilde{l}\cdot (\tilde{x}_j-\tilde{x}_k)\br \tilde{l}= R_{\theta}\left[\chi \bl s\cdot (x_j-x_k)\br s+\bl l\cdot (x_j-x_k)\br l\right].
 \end{align*}
Setting $\tilde{d}(\tilde{x}_j,\tilde{x}_k)=\tilde{x}_j-\tilde{x}_k$ this implies
\begin{align*}
\frac{\di \tilde{x}_j}{\di t}=\sum_{k\neq j} f_A\bl\left|\tilde{d}\right|\br\left[ \chi \bl \tilde{s}\cdot \tilde{d}\br \tilde{s}+\bl \tilde{l}\cdot \tilde{d}\br \tilde{l}\right]+f_R\bl\left|\tilde{d}\right|\br\tilde{d}
\end{align*}
 for all $j=1,\ldots,N$, i.e. $\tilde{x}_j,~j=1,\ldots,N$, solves \eqref{eq:particlemodel} for the tensor field $\tilde{T}$. Similarly, one can show that $\bar{\tilde{x}}_j$ is an equilibrium  to \eqref{eq:particlemodel}  for the tensor field $\tilde{T}$, given that $\bar{x}_j,~j=1,\ldots,N$, is an equilibrium  to \eqref{eq:particlemodel} for the tensor field $T$.

We turn to equilibria of the mean-field equation \eqref{eq:macroscopiceq} for spatially homogeneous tensor fields now. Let $\rho=\rho(\di x)$ denote an equilibrium  state to the mean-field PDE \eqref{eq:macroscopiceq} on $\R^2$ for the tensor field $T$ and define
\begin{align}\label{eq:rhovectortransformproof}
\tilde{\rho}(x)=\rho\bl x_c+R_{\theta}^{-1}(x-x_c)\br \text{a.e.}
\end{align}
where $x_c$ denotes the center of mass. Then, $\tilde{\rho}$ is an equilibrium state to \eqref{eq:macroscopiceq} for the tensor field $\tilde{T}$.

To show this result note that for $x\in\R^2$ we have 
\begin{align*}
&\bl F(\cdot,T)\ast \tilde{\rho}\br \bl x_c+R_{\theta}(x-x_c)\br\\
&=\int_{\R^2} F( x_c+R_{\theta}(x-x_c)-\bl x_c+R_{\theta}(x-x'_c)\br,{\tilde{T}}) \rho(\di x') \\
&=\int_{\R^2}\left[\vphantom{f_A(|x-x'|)\left[\chi \bl \tilde{s}\cdot (R_{\theta}(x-x'))\br \tilde{s}+\bl \tilde{l}\cdot (R_{\theta}(x-x'))\br \tilde{l}\right]}
f_R(|x-x'|)R_{\theta}(x-x')\right.\\&\qquad\qquad\qquad+\left.f_A(|x-x'|)\left[\chi \bl \tilde{s}\cdot (R_{\theta}(x-x'))\br \tilde{s}+\bl \tilde{l}\cdot (R_{\theta}(x-x'))\br \tilde{l}\right] \right]\rho(\di x')\\
&=R_{\theta}\bl F(\cdot,T)\ast \rho\br  \bl x\br 
\end{align*}
where the first equality follows from \eqref{eq:rhovectortransformproof} and the  substitution rule. The definitions of the repulsion and attraction forces in \eqref{eq:repulsionforce} and \eqref{eq:attractionforce} are used in the  second equality and \eqref{eq:proofvectortransform} is inserted in the third equality. Since  $x\in \text{supp}(\tilde{\rho})$ implies $x\in \text{supp}(\rho(x_c+R_{\theta}^{-1}(\cdot-x_c)))$ and $\bl F(\cdot,T)\ast \tilde{\rho}\br \bl x\br=R_{\theta}\bl F(\cdot,T)\ast \rho\br  \bl x_c+R_{\theta}^{-1}(x-x_c)\br $,  $\tilde{\rho}$ is an equilibrium state to \eqref{eq:macroscopiceq} for the tensor field $\tilde{T}$ provided that $\rho$ is an equilibrium state to \eqref{eq:macroscopiceq} for the tensor field $T$.

\bibliographystyle{plain}
\bibliography{references}
\end{document}